\newcommand{\argmax}{\operatornamewithlimits{\arg\!\max}}
\newcommand{\B}{\mathcal{B}}  % Class of Bernoulli random variables
\newcommand{\C}{\mathcal{C}}  % Class of valid confusion matrices
\newcommand{\E}{\operatornamewithlimits{\mathbb{E}}}  % Expectation
\newcommand{\F}{\mathcal{F}}  % Function class
\DeclareMathOperator*{\essup}{\text{ess~sup}}
\DeclareMathOperator*{\esinf}{\text{ess~inf}}
\newcommand{\inv}{^{-1}}
\renewcommand{\L}{\mathcal{L}} % Lebesgue space
\renewcommand{\P}{\mathcal{P}} % Class of probability distributions
\newcommand{\prob}{\mathbb{P}}  % Probability
\newcommand{\R}{\mathbb{R}} % Real numbers
\newcommand{\SC}{\mathcal{SC}}  % Class of stochastic classifiers
\newcommand{\X}{\mathcal{X}} % Covariate sample space
\newcommand{\Y}{\mathcal{Y}} % Set of class labels
\newcommand{\edit}[1]{#1}
\newcommand{\truepositive}{\text{TP}}
\newcommand{\truenegative}{\text{TN}}
\newcommand{\falsepositive}{\text{FP}}
\newcommand{\falsenegative}{\text{FN}}
\renewcommand{\hat}{\widehat} % Make hats wide by default
\renewcommand{\tilde}{\widetilde} % Make tildes wide by default
\newcommand{\sigmafield}{\mathcal{F}}
\newcommand{\ind}{\mathbf{1}}
\newtheoremstyle{myThm}   % Name
     {\topsep}                          % Space above 
     {\topsep}                          % Space below
     {\itshape}                         % Body font
     {}                                      % Indent amount
     {\sffamily\bfseries}           % Theorem head font
     {.}                                     % Punctuation after theorem head
     {.5em}                              % Space after theorem head
     {}                                      % Theorem head specifications. Empty means normal
\newtheoremstyle{myRem}   % Name
     {\topsep}                          % Space above 
     {\topsep}                          % Space below
     {}                         % Body font
     {}                                      % Indent amount
     {\sffamily\bfseries}           % Theorem head font
     {.}                               % Punctuation after theorem head
     {.5em}                              % Space after theorem head
     {}                                      % Theorem head specifications. Empty means normal
\newtheoremstyle{myDef}   % Name
     {\topsep}                          % Space above 
     {\topsep}                          % Space below
     {\itshape}                         % Body font
     {}                                      % Indent amount
     {\sffamily\bfseries}           % Theorem head font
     {.}                                 % Punctuation after theorem head
     {.5em}                              % Space after theorem head
     {}                                      % Theorem head specifications. Empty means normal     
\newcounter{thm}     
\theoremstyle{myThm}
\newtheorem{theorem}[thm]{Theorem}
\newtheorem{lemma}[thm]{Lemma}
\newtheorem{corollary}[thm]{Corollary}
\newtheorem{assumption}[thm]{Assumption}
\newenvironment{customthm}[1]
  {\innercustomthm}
  {\endinnercustomthm}
\newenvironment{customlemma}[1]
  {\innercustomlemma}
  {\endinnercustomlemma}
\newenvironment{customcorollary}[1]
  {\innercustomcorr}
  {\endinnercustomcorr}
\theoremstyle{myRem}
\newtheorem{example}[thm]{Example}
\theoremstyle{myDef}
\newtheorem{definition}[thm]{Definition}
\title{Optimal Binary Classification Beyond Accuracy}
\author{%
  Shashank Singh\\
  Max Planck Institute for Intelligent Systems\\
  T\"ubingen, Germany \\
  \texttt{shashankssingh44@gmail.com}
  \And
  Justin Khim\thanks{The contributions in this paper were made prior to joining Amazon.} \\
  Amazon \\
  New York, NY \\
  \texttt{jkhim@amazon.com}
}
\begin{document}

\maketitle

\begin{abstract}
    \edit{The vast majority of statistical theory on binary classification characterizes performance in terms of accuracy. However, accuracy is known in many cases to poorly reflect the practical consequences of classification error, most famously in imbalanced binary classification, where data are dominated by samples from one of two classes. The first part of this paper derives a novel generalization of the Bayes-optimal classifier from accuracy to any performance metric computed from the confusion matrix. Specifically, this result (a) demonstrates that stochastic classifiers sometimes outperform the best possible deterministic classifier and (b) removes an empirically unverifiable absolute continuity assumption that is poorly understood but pervades existing results. We then demonstrate how to use this generalized Bayes classifier to obtain regret bounds in terms of the error of estimating regression functions under uniform loss. Finally, we use these results to develop some of the first finite-sample statistical guarantees specific to imbalanced binary classification. Specifically, we demonstrate that optimal classification performance depends on properties of class imbalance, such as a novel notion called Uniform Class Imbalance, that have not previously been formalized. We further illustrate these contributions numerically in the case of $k$-nearest neighbor classification.}
\end{abstract}

\section{Introduction}
Many binary classification problems exhibit class imbalance, in which one of the two classes vastly outnumbers the other. 
Classifiers that perform well with balanced classes routinely fail for imbalanced classes, and developing reliable techniques for classification in the presence of severe class imbalance remains a challenging area of research~\citep{he2013imbalanced,krawczyk2016learning,fernandez2018learning}. 
Many practical approaches have been proposed to improve performance under class imbalance, including reweighting plug-in estimates of class probabilities~\citep{lewis1995evaluating}, resampling data to improve class imbalance~\citep{chawla2002smote}, or reformulating classification algorithms to optimize different performance metrics~\citep{dembczynski2013optimizing, fathony2019genericMetrics, joachims2005support}. 
Extensive discussion of practical methods for handling class imbalance are surveyed in the books of \citet{he2013imbalanced} and \citet{fernandez2018learning}.

Despite the pervasive challenge of class imbalance, our theoretical understanding of class imbalance is limited. 
The vast majority of theoretical performance guarantees for classification characterize classification accuracy or, equivalently, misclassification risk~\citep{mohri2018foundations}, which is typically an uninformative measure of performance for imbalanced classes. 
Under measures that are used with imbalanced classes in practice, such as precision, recall, $F_\beta$ scores, and class-weighted scores~\citep{van1974foundation, van1979information}, existing theoretical guarantees are limited to statistical consistency, in that the algorithm under consideration asymptotically optimizes the metric of choice~\citep{koyejo2014consistent,menon2013statistical,narasimhan2014statistical}; specifically, there is no finite-sample theory that would allow comparison of an algorithm's performance to that of other algorithms or to theoretically optimal performance levels.
Additionally, existing theory for classification does not explicitly model the effects of class imbalance, especially severe imbalance (i.e., as the proportion of samples from the rare class vanishes), and hence sheds little light on how severe imbalance influences optimal classification.

This paper provides two main contributions. 
First, in Section~\ref{sec:generalized_bayes}, we provide a novel characterization of classifiers optimizing general performance metrics that are functions of a classifier's confusion matrix.
This characterization generalizes a classical result, that the Bayes classifier optimizes classification accuracy, to a much larger class of performance measures, including those commonly used in imbalanced classification\edit{, while relaxing certain empirically unverifiable distributional assumptions that pervade existing such results}.
Interestingly, we show that, in general, a Bayes classifier always exists if one considers stochastic classifiers, but not if one considers only deterministic classifiers.
We then use this result 
% While this result may be of independent interest, we use it here to show how 
to provide relative performance guarantees under these more general performance measures,
% In particular, we show that performance guarantees can be derived
in terms of the error of estimating the class probability (regression) function under uniform ($\L_\infty$) loss.

This motivates our second main contribution: an analysis of $k$-nearest neighbor ($k$NN) classification under uniform loss. 
In doing so, we also propose an explicit model of a sub-type of class imbalance, which we call Uniform Class Imbalance, and we show that the $k$NN classifier behaves quite differently under Uniform Class Imbalance than under other sub-types of class imbalance. 
To the best of our knowledge, such sub-types of class imbalance have not previously been distinguished in either the theoretical or practical literature, and we hope that identifying such relevant features of imbalanced datasets may facilitate development of classifiers that perform well on specific imbalance problems of practical importance.
Collectively, these contributions provide some of the first finite-sample performance guarantees for nonparametric binary classification under performance metrics that are appropriate for imbalanced data and show how optimal performance depends on the nature of imbalance in the data.

\section{Related Work}
\label{sec:relatedWork}
Here, we discuss how our results relate to existing theoretical guarantees for imbalanced binary classification and prior analyses of $k$NN methods.

\subsection{Theoretical Guarantees for Imbalanced Binary Classification}
Statistical learning theory has studied classification extensively in terms of accuracy~\citep{mohri2018foundations}.
However, when classes are severely imbalanced, accuracy ceases to be an informative measure of performance~\citep{cortes2004auc}, necessitating guarantees in terms of other performance metrics.
% To the best of our knowledge, exist no statistical guarantees in terms of the functions of the confusion matrix in the generality that we propose.
Several papers have sought to address this~\citep{narasimhan2014statistical,narasimhan2015consistent,koyejo2014consistent,yan2018binary,wang2019consistent} \edit{by generalizing the Bayes optimal classifier, a well-known classifier that provably optimizes accuracy, to more general performance measures better reflecting the desiderata of imbalanced classification. Relatedly, several works have investigated relationships between these different performance measures and demonstrated that they differ essentially in how they determine the optimal threshold between the two classes~\citep{flach2003geometry,hernandez2013roc,flach2016classifier}. However, existing results make empirically unverifiable assumptions about the distribution of the data, leaving questions about their relevance to real data. We discuss these assumptions in detail in Section~\ref{sec:generalized_bayes}, where our main result, Theorem~\ref{thm:generalized_bayes}, leverages the idea of stochastic thresholding to relax these assumptions.}

Another body of closely related theoretical work studies Neyman-Pearson classification, which attempts to minimize misclassification error on one class subject to constraints on misclassification error on other classes, analogous to the approach of statistical hypothesis testing. While substantial theoretical guarantees do exist for Neyman-Pearson classification~\citep{rigollet2011neyman, tong2013plug, tong2016survey}, these focus on performance within the Neyman-Pearson framework, rather than under general performance measures as in our work, and we know of no work considering stochastic classification under the Neyman-Pearson framework. Interestingly, our use of stochastic classifiers in Theorem~\ref{thm:generalized_bayes} parallels classical results in hypothesis testing~\citep{lehmann2006testing}, and our proof of Theorem~\ref{thm:generalized_bayes} involves a reduction (Lemma~\ref{lemma:opt_problem} in the Appendix) of optimization of general classification performance measures to Neyman-Pearson classification.

Meanwhile, many practical approaches to handling class imbalance, such as class-weighting and resampling have been proposed, but the theoretical understanding of these methods is limited.
% Common approaches include class-weighting, resampling, margin adjustment, and Neyman-Pearson classification.
Class-weighting is a natural choice in applications where costs, \edit{or cost ratios~\citep{flach2003geometry}}, can be explicitly assigned and, in the case of binary classification, is statistically equivalent to threshold selection, which we discuss later in this paper~\citep{scott2012calibrated}.
% . Cost-weighting was previously studied under the motivation of addressing class imbalance before \citep{scott2012calibrated}, but in the context of calibrated losses, i.e., the guarantee that minimizing a surrogate loss for $0$-$1$ loss leads to a Bayes-optimal estimator in the classical sense (see Eq.~\eqref{eq:bayes_classifier}; \citet{liu2007fisher, tewari2007consistency}).
In practice, resampling appears to be the most popular approach to handling class imbalance~\citep{he2013imbalanced}.
Undersampling the dominant class is straightforward and can provide computational benefits with little loss in statistical performance~\citep{fithian2014local}, while interest in oversampling rare classes, sometimes referred to as data augmentation, has grown with the advent of sophisticated generative models to produce additional data \citep{mariani2018bagan}.
However, the theoretical ramifications of oversampling techniques used for imbalanced classification, most commonly variants of SMOTE~\citep{chawla2002smote}, are poorly understood.
% Margin-adjustment involves adjusting the margins of support vector machines (SVMs) to appropriately handle class-weighting \citep{lin2002support}, and \cite{scott2012calibrated} allows for class-based modifications to the margin as well. More recent work adjusts the margins for deep neural network classifiers \citep{cao2019learning}.

\subsection{\texorpdfstring{$k$}{k}NN Classification and Regression}
The $k$NN classifier is one of the oldest and most well-studied nonparametric classifiers~\citet{fix1951discriminatory}. 
Early theoretical results include, \citet{cover1967nearest}, who showed that the misclassification risk of the $k$NN classifier with $k = 1$ is at most twice that of the Bayes-optimal classifier, and \citet{stone1977consistent}, who showed that the $k$NN classifier is Bayes-consistent if $k \to \infty$ and $k/n \to 0$.
Extensive literature on the accuracy of $k$NN classification has since developed~\citep{devroye1996probabilistic,gyorfi2002distribution,samworth2012optimal,chaudhuri2014rates,gottlieb2014efficient,biau2015lectures,gadat2016classification,doring2018rate,kontorovich2015bayes,gottlieb2018near,cannings2019local,hanneke2020universal}.

Rather than accuracy bounds for $k$NN classification, the bounds on uniform error we present in Section~\ref{sec:uniform_error} are most closely related to risk bounds for $k$NN regression, of which the results of \cite{biau2010rates} are representative.
\cite{biau2010rates} gives convergence rates for $k$NN regression in $\L_2$ risk, weighted by the covariate distribution, in terms of noise variance and covering numbers of the covariate space. While closely related to our bounds on uniform ($\L_\infty$) risk, their results differ in at least three main ways. 
First, minimax rates under $\L_\infty$ risk are necessarily worse than under $\L_2$ risk by a logarithmic factor, as implied by our lower bounds. 
Second, the fact that \citet{biau2010rates} use a risk that is weighted by the covariate distribution allows them to avoid our assumption that the covariate density is lower bounded away from \(0\), whereas, the lower boundedness assumption is unavoidable under $\L_\infty$ risk.
Finally, \citet{biau2010rates} assume additive noise with finite variance; Bernoulli noise is crucial for us to model severe class imbalance.

\edit{Extensive r}esearch on $k$NN for imbalanced classification has focused on algorithmic modifications, which are surveyed by \cite{fernandez2018learning}.
Examples include prototype selection \citep{liu2011class, lopez2014addressing, vluymans2016eprennid},
and gravitational methods~\citep{cano2013weighted, zhu2015gravitational}.
We are aware of no statistical guarantees exist for such methods.

\section{Setup and Notation}
\label{sec:Setup}
Let \((\X, \rho)\) be a separable metric space, and let \(\Y = \{0, 1\}\) denote the set of classes. For any $x \in \X$ and $\epsilon > 0$, $B(x, \epsilon) := \{z \in \X : \rho(x, z) < \epsilon\}$ denotes the open radius-$\epsilon$ ball around $x$.
Consider \(n\) independent samples \((X_{1}, Y_{1}), \ldots, (X_{n}, Y_{n})\) drawn from a distribution \(P_{X, Y}\) on \(\X \times \Y\) with marginals $P_X$ and $P_Y$.
For positive sequences $\{a_n\}_{i = 1}^\infty$ and $\{b_n\}_{i = 1}^\infty$, $a_n \asymp b_n$ means $\liminf_{n \to \infty} a_n/b_n > 0$ and $\limsup_{n \to \infty} a_n/b_n < \infty$.

To optimize general performance metrics, we must consider stochastic classifiers. 
Formally, letting $\B := \{Y \sim \text{Bernoulli}(p) : p \in [0, 1]\}$ denote the set of binary random variables, a stochastic classifier can be modeled as a mapping $\hat Y : \X \to \B$, where, for any $x \in \X$, $\E[\hat Y(x)]$ is the probability that the classifier assigns $x$ to class $1$. 
We use $\SC$ to denote the class of stochastic classifiers.

The true \emph{regression function} \(\eta^*: \X \to [0, 1]\) is defined as $\eta^*(x) := \prob \left[Y = 1| X = x \right] = \E \left[ Y | X = x \right]$;
that is, given an instance $X_i$, the label $Y_i$ is Bernoulli-distributed with mean $\eta(X_i)$.
As we show in the next section, an optimal classifier can always be written in terms of the true regression function $\eta$, motivating estimates $\hat \eta : \X \to [0, 1]$ of $\eta$. Such estimates $\hat \eta$ are referred to as ``regressors''.

\section{Optimal Classification Beyond Accuracy}
\label{sec:generalized_bayes}
A famous result states that classification accuracy is maximized by the  ``Bayes'' classifier
\begin{equation}
    \hat Y(x) \sim \text{Bernoulli}\left( 1\{\eta^*(x) > 0.5\} \right).
    \label{eq:bayes_classifier}
\end{equation}
\edit{Here, $\hat Y(x)$ is simply a constant (deterministic) random variable that takes either the value 0 or the value 1 with probability 1 (depending on $\eta^*(x)$). Our reason for writing Eq. (1) in this seemingly redundant way will become clear with Definition~\ref{def:regression_thresholding_classifier} below.}

Although $\eta^*$ is unknown in practice, this result is a cornerstone of the statistical theory of binary classification because it provides an optimal performance benchmark against which a classifier can be evaluated in terms of accuracy~\citep{devroye1996probabilistic,mitchell1997machine,james2013introduction}. 
As discussed previously, accuracy can be a poor measure of performance in the imbalanced case. 
Therefore, the main contribution of this section, provided in Theorem~\ref{thm:generalized_bayes} below, is to generalize this result to a broad class of classification performance measures, including those commonly used in imbalanced classification. 
First, we specify performance measures for which our results apply.

\subsection{Confusion Matrix Measures (CMMs)}
Nearly all measures of classification performance, including accuracy, precision, recall, $F_\beta$ scores, and others, can be computed from the confusion matrix, which counts the number of test samples in each $(\text{true class, estimated class})$ pair.
Formally, let $\C := \{C \in [0, 1]^{2 \times 2} : C_{1,1} + C_{1, 2} + C_{2, 1} + C_{2,2} = 1\}$ denote the set of all possible binary confusion matrices.
Given a classifier $\hat Y$, the \emph{confusion matrix} $C_{\hat Y} \in \C$ and \emph{empirical confusion matrix} $\hat C_{\hat Y} \in \C$ are given by
\begin{equation}
    C_{\hat Y} =
    \begin{bmatrix}
        \truenegative_{\hat Y} & \falsepositive_{\hat Y} \\
        \falsenegative_{\hat Y} & \truepositive_{\hat Y}
    \end{bmatrix},
    \hat C_{\hat Y} =
    \begin{bmatrix}
        \hat\truenegative_{\hat Y} & \hat\falsepositive_{\hat Y} \\
        \hat\falsenegative_{\hat Y} & \hat\truepositive_{\hat Y}
    \end{bmatrix},
    \label{eq:true_and_empirical_CFs}
\end{equation}
wherein the true positive \edit{probability} $\truepositive_{\hat Y}$ and empirical true positive \edit{probability} $\hat\truepositive_{\hat Y}$ are given by
\begin{equation}
    \truepositive_{\hat Y} = \E \left[ \eta^*(X) \hat Y(X) \right],
    \widehat{\truepositive}_{\hat Y} = \frac{1}{n} \sum_{i = 1}^{n} Y_i \hat Y(X_i),
    \label{eq:true_and_empirical_TPs}
\end{equation}
and the true and empirical false positive ($\falsepositive_{\hat Y}$ and $\hat\falsepositive_{\hat Y}$), false negative ($\falsenegative_{\hat Y}$ and $\hat\falsenegative_{\hat Y}$), and true positive ($\truepositive_{\hat Y}$ and $\hat\truepositive_{\hat Y}$) probabilities are defined similarly. Note that the expectation in Eq.~\eqref{eq:true_and_empirical_TPs} is over randomness both in the data and in the classifier.

Intuitively, measures of a classifier's performance should improve as $\truenegative$ and $\truepositive$ increase and $\falsenegative$ and $\falsepositive$ decrease. 
We therefore define the class of Confusion Matrix Measures (CMMs) as follows:
\begin{definition}[Confusion Matrix Measure (CMM)]
    A function $M : \C \to \mathbb{R}$ is called a \emph{confusion matrix  measure (CMM)} if, for any confusion matrix
    \[C =
        \begin{bmatrix}
            \truenegative & \falsepositive \\
            \falsenegative & \truepositive
        \end{bmatrix} \in \C,
        \epsilon_1 \in [0, \falsepositive], \epsilon_2 \in [0, \falsenegative],
    \; \text{ we have } \; 
    M(C) \leq M \left(
        \begin{bmatrix}
            \truenegative + \epsilon_1 & \falsepositive - \epsilon_1 \\
            \falsenegative - \epsilon_2 & \truepositive + \epsilon_2
        \end{bmatrix}
    \right).
    \]
    \label{def:CMM}
\end{definition}
Essentially, correcting an incorrect classification should not reduce a CMM. This is true of any reasonable measure of classification performance, and hence analyzing CMMs allows us to obtain theoretical guarantees for all performance measures used in practice. Specifically, by evaluating their gradients in the directions
$\begin{bsmallmatrix}1 & -1\\0 & 0\end{bsmallmatrix}$ and $\begin{bsmallmatrix}0 & 0\\-1 & 1\end{bsmallmatrix}$, one can verify that most performance measures, such as weighted accuracy, precision, recall, $F_\beta$ scores, and Matthew's Correlation Coefficient are CMMs. We note that the area under receiver operating characteristic (AUROC) and area under precision-recall curve (AUPRC) are not CMMs because they evaluate ($\R$-valued) scoring functions rather than ($\{0,1\}$-valued) classification functions. However, both AUROC and AUPR are averages of CMMs computed at various classification thresholds, and, as we discuss in Appendix~\ref{app:AUROC}, our results for CMMs thus imply similar results for these measures. We next present our main result of Section~\ref{sec:generalized_bayes}, which generalizes the Bayes classifier~\eqref{eq:bayes_classifier} to arbitrary CMMs.

\subsection{Generalizing the Bayes Classifier}
The Bayes classifier thresholds the regression function deterministically at the value $0.5$. 
The following generalizes this to a stochastic threshold:
\begin{definition}[Regression-Thresholding Classifier (RTC)]
    A classifier $\hat Y : \X \to \B$ is called a \emph{regression-thresholding classifier} (RTC) if, for some $p, t \in [0, 1]$ and $\eta : \X \to [0, 1]$,
    \[\hat Y(x) \sim \text{Bernoulli}(p \cdot 1\{\eta(x) = t\} + 1\{\eta(x) > t\}),
      \quad \text{ for all } x \in \X.\]
    In the sequel, we will denote such classifiers $\hat Y_{p,t,\eta}$, and refer to the pair $(t, p)$ as the \emph{threshold}.
    \label{def:regression_thresholding_classifier}
\end{definition}
Now we can state the main result of this paper:
\begin{theorem}
    For any CMM $M$ and \edit{stochastic} classifier $\hat Y$, there is an RTC $\hat Y_{p,t,\eta}$ with $M(\hat Y_{p,t,\eta}) \geq M(\hat Y)$. In particular, if $M$ is maximized by any stochastic classifier, then $M$ is maximized by a RTC.
    \label{thm:generalized_bayes}
\end{theorem}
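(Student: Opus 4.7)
The plan is to exploit two structural facts about CMMs and then reduce the problem to a Neyman-Pearson-type optimization. First, I would observe that $\truepositive_{\hat Y} + \falsenegative_{\hat Y} = \E[\eta^*(X)]$ and $\falsepositive_{\hat Y} + \truenegative_{\hat Y} = 1 - \E[\eta^*(X)]$ do not depend on $\hat Y$. Thus the entire confusion matrix $C_{\hat Y}$ is determined by the pair $(\truepositive_{\hat Y}, \falsepositive_{\hat Y})$, and Definition~\ref{def:CMM}, read with these row sums fixed, implies that $M$ viewed as a function of this pair is non-decreasing in $\truepositive$ and non-increasing in $\falsepositive$. Consequently, to prove the theorem it suffices to produce an RTC $\hat Y_{p,t,\eta^*}$ with $\truepositive_{\hat Y_{p,t,\eta^*}} \geq \truepositive_{\hat Y}$ and $\falsepositive_{\hat Y_{p,t,\eta^*}} \leq \falsepositive_{\hat Y}$.

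Next, I would fix $\alpha := \falsepositive_{\hat Y}$ and consider the constrained problem of maximizing $\E[\eta^*(X) q(X)]$ over measurable $q : \X \to [0,1]$ subject to $\E[(1-\eta^*(X)) q(X)] \leq \alpha$, where $q(x) := \E[\tilde Y(x)]$ parameterizes a generic stochastic classifier $\tilde Y \in \SC$. This is precisely the Neyman-Pearson problem for testing the finite measures $\eta^* \, dP_X$ versus $(1-\eta^*) \, dP_X$. Since the ``likelihood ratio'' $\eta^*(x)/(1-\eta^*(x))$ is monotone increasing in $\eta^*(x)$, any likelihood-ratio test is equivalent to a threshold test on $\eta^*$ itself. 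The randomized Neyman-Pearson lemma then yields $t \in [0,1]$ and $p \in [0,1]$ such that $\hat Y_{p,t,\eta^*}$ solves the problem; because $\hat Y$ is feasible for this program, $\truepositive_{\hat Y_{p,t,\eta^*}} \geq \truepositive_{\hat Y}$ and the desired comparison $M(\hat Y_{p,t,\eta^*}) \geq M(\hat Y)$ follows from the monotonicity established in the first step. This reduction is essentially the content attributed in the related-work discussion to Lemma~\ref{lemma:opt_problem}.

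The main obstacle is executing the Neyman-Pearson step cleanly without any absolute continuity assumption on the law of $\eta^*(X)$. The map $t \mapsto \E[(1-\eta^*(X)) \mathbf{1}\{\eta^*(X) > t\}]$ is monotone but may jump at values $t$ with $P_X(\eta^*(X) = t) > 0$. At any such jump I would interpolate by letting the randomization parameter $p \in [0,1]$ act on the set $\{\eta^*(X) = t\}$, which fills in the jump continuously and lets me realize $\falsepositive_{\hat Y_{p,t,\eta^*}} \leq \alpha$ with equality on the frontier. Care is also needed at the boundary case $\eta^*(x) \in \{0,1\}$, which I would handle by separating those points out before invoking the likelihood-ratio argument. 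This use of the randomization parameter is exactly why stochastic thresholding removes the absolute-continuity assumptions pervading prior work, and it also explains the second sentence of the theorem: over deterministic classifiers the image of $\falsepositive$ as a function of a deterministic threshold is a strict subset of $[0, 1 - \E[\eta^*(X)]]$, so the optimum may be unattainable, whereas among stochastic RTCs it is always attained.
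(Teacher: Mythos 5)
Your proposal is correct, and its opening reduction coincides with the paper's: you observe that the row sums $\truepositive+\falsenegative$ and $\falsepositive+\truenegative$ are classifier-independent, so Definition~\ref{def:CMM} makes $M$ monotone in $(\truepositive,\falsepositive)$, and it therefore suffices to dominate $\hat Y$ simultaneously in $\truepositive$ and $\falsepositive$ --- this is exactly the content of the paper's Lemma~\ref{lemma:CMM_neyman_pearson_equivalence}. Where you genuinely diverge is in how the resulting Neyman--Pearson problem is solved. The paper does not invoke the classical theory: it first Rao--Blackwellizes an arbitrary stochastic classifier into one of the form $\mathrm{Bernoulli}(f(\eta^*(x)))$ via a conditional-expectation argument (Lemma~\ref{lemma:regression_based_classifier}), and then proves from scratch, by a sequence of variational perturbations and measure-theoretic lemmas (Lemma~\ref{lemma:opt_problem}), that any solution of the constrained problem can be replaced by a monotone function of $\eta^*$ and then by a stochastic threshold. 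You instead cite the randomized Neyman--Pearson lemma for the measures $\eta^*\,dP_X$ versus $(1-\eta^*)\,dP_X$, noting that the likelihood ratio is monotone in $\eta^*$ so the most powerful level-$\alpha$ randomized test is precisely an RTC with randomization on the atom $\{\eta^*(X)=t\}$. This replaces the two hardest pieces of the paper's argument with a single classical citation, and it has the further advantage of proving the first sentence of Theorem~\ref{thm:generalized_bayes} unconditionally: the randomized Neyman--Pearson lemma guarantees an attained most powerful test for every level, whereas the paper's Lemma~\ref{lemma:opt_problem} must assume the optimization problem has a solution (and the appendix restatement of the theorem is correspondingly conditional). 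What the paper's longer route buys is self-containedness and an explicit description of how arbitrary, possibly badly behaved classifiers are regularized; what your route buys is brevity and a transparent connection to hypothesis testing that the paper itself gestures at in Section~\ref{sec:relatedWork}. Your remaining caveats (normalizing the two finite measures to conditional distributions, the boundary cases $\eta^*\in\{0,1\}$ or $\falsepositive_{\hat Y}=0$, and measurability of $x\mapsto\E[\hat Y(x)]$) are all routine and correctly flagged.
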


As a special case of Theorem~\ref{thm:generalized_bayes}, the classical Bayes classifier corresponds to $M(C) = \truenegative + \truepositive$, $p = 0$, and $t = 0.5$. 
However, as discussed in the next paragraph, without stronger assumptions, Theorem~\ref{thm:generalized_bayes} does \emph{not} hold for deterministic classifiers.
Since RTCs generalize both the RTC structure and optimality properties of the Bayes classifier, we also refer to them as \emph{generalized Bayes classifiers}. 
We note that \emph{existence} of any maximizer $\hat Y$ of $M(C_{\hat Y})$ may depend on specific properties, such as (semi)continuity or convexity of $M$, which we do not investigate here.
% existence here and focus on characterizing a maximizer when it exists.
% Maximizers $\hat Y$ of $M(C_{\hat Y})$ are typically not unique, since changing $\hat Y$ on sets of $P_X$-measure $0$ does not change $C_{\hat Y}$; however, among maximizers, generalized Bayes classifiers are an important, well-behaved subclass.

We emphasize that Theorem~\ref{thm:generalized_bayes} makes absolutely no assumptions on the distribution of the data. In particular, all prior characterizations of optimal classifiers under general performance metrics assume that the distribution of the class probability $\eta(X)$ is absolutely continuous~\citep{narasimhan2014statistical,narasimhan2015consistent,koyejo2014consistent,yan2018binary,wang2019consistent}\footnote{Exceptions for the case of $F_1$ score are \citet[][Lemma 12]{zhao2013beyond} and \citet[][Theorem 1]{lipton2014optimal}.},
% To the best of our knowledge, no previous results characterize an optimal classifier under such general conditions.
% Following earlier work of \citet{narasimhan2014statistical,narasimhan2015consistent} and \citet{koyejo2014consistent}, the most similar results of which we are aware are those of \citet{yan2018binary} and \citet{wang2019consistent}. 
% Theorem 3.1 of \citet{yan2018binary} shows that, under a ``karmic'' assumption stronger than our monotonicity assumption in Definition~\ref{def:CMM}, if the random variable $\eta(X)$ is absolutely continuous (i.e., has a density with respect to Lebesgue measure on $[0, 1]$), then an optimal classifier can be obtained by deterministically thresholding the regression function $\eta$. 
% Our Theorem~\ref{thm:generalized_bayes} is strictly more general, since if $\eta(X)$ is absolutely continuous, then one can set $p = 0$ without changing $C_{\hat Y_{p,t,\eta}}$. 
% More recently, \citet{wang2019consistent} showed in their Corollary 2, under absolute continuity of $\eta(X)$ and a monotonicity assumption comparable to our Definition~\ref{def:CMM}, that the optimal classifier can always be written as a mixture of two deterministic classifiers. 
% Although \citet{wang2019consistent} consider more general multiclass and multilabel settings, in the binary setting, our result is more precise, since RTCs are deterministic except for a single value of $\eta$. 
% We leave it to future work to develop a similarly precise generalization of this result to multiclass and multilabel settings.
and \citet{wang2019consistent} claim that regularity assumptions on $\eta(X)$ such as absolute continuity ``seem to be unavoidable''. Our Theorem~\ref{thm:generalized_bayes} is the first result to omit such assumptions, and we specifically show that this comes at the cost of the optimal classifier possibly being non-deterministic for a single atom of $\eta(X)$. Figure~\ref{fig:threshold_plot} visually compares the stochastic thresholding classifier in Theorem~\ref{thm:generalized_bayes} to prior approaches.
\begin{figure}[hbtp]
    \centering
    \includegraphics[width=\linewidth,clip,trim={2in 0 2in 0}]{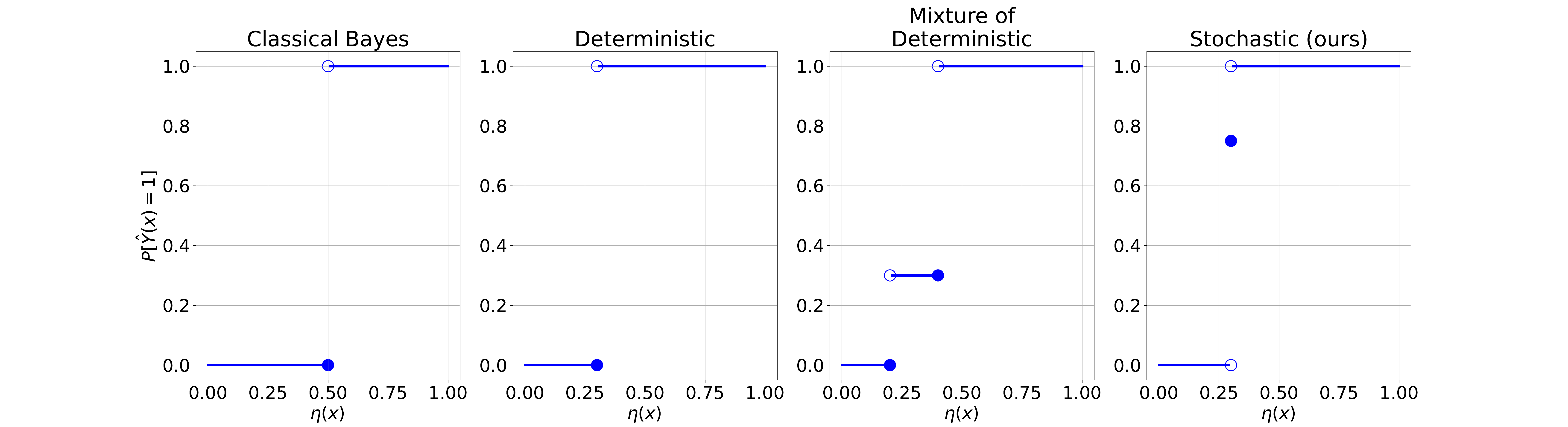}
    \caption{Examples of four different approaches to thresholding the regression function. Classical Bayes thresholding (Eq.~\eqref{eq:bayes_classifier}) always thresholds deterministically at $\eta(x) = 0.5$ to optimize accuracy. \citet{koyejo2014consistent,narasimhan2014statistical} and others have suggested using other Deterministic thresholds (e.g., $\eta(x) = 0.3$, shown here) to optimize other CMMs, assuming $\eta(X)$ is absolutely continuous. \citet{wang2019consistent} showed that the optimal classifier can always be written as a Mixture of Deterministic (MD) classifiers (e.g., a $(0.3, 0.7)$-mixture of thresholds at $\eta(x) = 0.2$ and $\eta(x) = 0.4$, shown here). Finally, we propose using a single Stochastic threshold (e.g., $(t, p) = (0.3, 0.75)$, shown here). Only MD and Stochastic approaches are optimal in general (for arbitrary CMMs, without $\eta(X)$ absolutely continuous), while Stochastic thresholding is strictly simpler than MD.}
    \label{fig:threshold_plot}
\end{figure}

The generality of Theorem~\ref{thm:generalized_bayes} necessitates a significantly more complex proof than prior work.
In particular, we prove Theorem~\ref{thm:generalized_bayes} in Appendix~\ref{app:generalized_bayes} using a series of variational arguments. 
Roughly speaking, given a classifier $\hat Y$, we construct a perturbation $\hat Y'$ of $\hat Y$ such that either $M(C_{\hat Y}) < M(C_{\hat Y'})$ or $\hat Y'$ is an RTC and $M(C_{\hat Y}) \leq M(C_{\hat Y'})$. 
Since, the classifier $\hat Y$ might be quite poorly behaved (e.g., its behavior on sets of $P_X$-measure $0$ could be arbitrary), the technical complexity lies in constructing admissible perturbations (i.e., those that are well-defined classifiers).
For this reason, the proof of Theorem~\ref{thm:generalized_bayes} involves a series of constructions of increasingly well-behaved classifiers.

Theorem~\ref{thm:generalized_bayes} tells us that a generalized Bayes classifier can always be written in terms of the regression function $\eta$ and two scalar parameters $(t, p)$ depending on the distribution of $\eta(X)$ and the CMM $M$. The next example shows that this characterization cannot be simplified without stronger assumptions:

\begin{example}
    Suppose $\X = \{0\}$ is a singleton, $\eta(0) \in (0, 1)$, and, for some $\theta > 0$, $M(C) = (\truepositive)^\theta \truenegative$. One can check that $M$ is a valid CMM. Suppose $\hat Y$ is an RTC. It is straightforward to compute that $M(C_{\hat Y}) = (p \eta(0))^\theta (1 - p) (1 - \eta(0)) 1\{t = \eta(0)\}$, and that
    $M(C_{\hat Y})$ is uniquely maximized by $p = \frac{\theta}{\theta + 1} \in (0, 1)$ and $t = \eta(0) \in (0, 1)$. This shows that both threshold parameters $p$ and $t$ in an RTC are necessary, in the absence of further assumptions on $M$ or $\eta$. This example also illustrates the need for stochasticity to optimize general CMMs. 
    Specifically, for any deterministic classifier $\hat Y$, either $\hat Y(0) = 0$ (so $\truepositive = 0$) or $\hat Y(0) = 1$ (so $\truenegative = 0$); in either case, $M(C_{\hat Y}) = 0$.
    \label{example:stochasticity_necessary}
\end{example}

This performance gap between stochastic and deterministic classifiers is closely related to Theorem 1 of \citet{cotter2019making}, which provides a closely related lower bound on how well a stochastic classifier can be approximated by a deterministic one, in terms of the probability assigned to atoms of $\eta(X)$. However, \citet{cotter2019making} only study how well stochastic classifiers can be approximated by deterministic ones (with the motivation of derandomizing classifiers), not whether stochastic classifiers can systematically outperform deterministic ones, as we show here.

\subsection{Relative Performance Guarantees in terms of the Generalized Bayes Classifier}
\label{subsec:relative_performance_guarantees}
Theorem~\ref{thm:generalized_bayes} motivates a two-step approach to imbalanced classification in which one first estimates the regression function $\eta$ and then selects a stochastic threshold $(t, p)$ that optimizes empirical performance $M(\hat C_{\hat Y})$. Such an approach has many practical advantages. For example, as we show in Appendix~\ref{app:computation}, a simple algorithm can exactly optimize the threshold $(t, p)$ over large datasets in $O(n \log n)$ time. Additionally, one can address covariate shift 
%(specifically, a change in $P_X$ without a corresponding change in $P_{Y|X}$), 
or retune a classifier trained under one CMM to perform well under another CMM, simply by re-optimizing $(t, p)$, which is statistically and computationally much easier than retraining a classifier from scratch. 
In this section, we focus on an advantage for theoretical analysis, namely that the error of such a classifier decomposes into errors in selecting $(t, p)$ and errors in estimating $\eta$, allowing the derivation of performance guarantees relative a generalized Bayes classifier.
All results in this section are proven in Appendix~\ref{app:relative_performance_guarantees}.

We first bound the performance difference of thresholding two regressors in terms of their $\L_\infty$ distance. 
This will allow us to bound error due to using a regressor $\hat\eta$ instead of the true $\eta$.
\begin{lemma}
    For $p,t \in [0, 1]$, $\eta, \eta' : \X \to [0, 1]$,
    $\left\| C_{\hat Y_{p,t,\eta}}\hspace{-3mm}- C_{\hat Y_{p,t,\eta'}} \right\|_\infty\hspace{-1mm}
      \leq \prob \left[ |\eta(X) - t| \leq \left\|\eta - \eta'\right\|_\infty \right]$.
    \label{lemma:approximation_error}
\end{lemma}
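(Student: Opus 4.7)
The plan is to reduce the matrix-norm bound to a pointwise comparison of the two classifiers' acceptance probabilities, and then integrate. For a regressor $\eta$, let $p_\eta(x) := p\cdot 1\{\eta(x) = t\} + 1\{\eta(x) > t\}$, so that $\hat Y_{p,t,\eta}(x)\sim \text{Bernoulli}(p_\eta(x))$. The four entries of $C_{\hat Y_{p,t,\eta}}$ then admit clean closed forms, e.g.\ $\truepositive_{\hat Y_{p,t,\eta}} = \E[\eta^*(X) p_\eta(X)]$, $\falsepositive_{\hat Y_{p,t,\eta}} = \E[(1-\eta^*(X)) p_\eta(X)]$, and analogous formulas for $\falsenegative$ and $\truenegative$ using $1 - p_\eta$.

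Set $\delta := \|\eta - \eta'\|_\infty$. The key pointwise claim is that
\[|p_\eta(x) - p_{\eta'}(x)| \le 1\{|\eta(x) - t| \le \delta\} \quad \text{for every } x \in \X.\]
Indeed, if $\eta(x) > t + \delta$ then $\eta'(x) > t$ as well, so both probabilities equal $1$; if $\eta(x) < t - \delta$ then $\eta'(x) < t$, so both equal $0$; and in the remaining case the RHS is $1$, which trivially bounds $|p_\eta(x) - p_{\eta'}(x)| \in [0,1]$. This is the only place where the structure of RTCs is really used, and it is the conceptual heart of the lemma.

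Given this pointwise bound, every entry of $C_{\hat Y_{p,t,\eta}} - C_{\hat Y_{p,t,\eta'}}$ is an expectation of the form $\E[f(X)(p_\eta(X) - p_{\eta'}(X))]$ (or with $p_\eta$ replaced by $1-p_\eta$), where $f(X) \in \{\eta^*(X), 1-\eta^*(X)\}$ takes values in $[0,1]$. Bounding $|f| \le 1$ and applying the pointwise inequality yields
\[\bigl|(C_{\hat Y_{p,t,\eta}})_{ij} - (C_{\hat Y_{p,t,\eta'}})_{ij}\bigr| \le \E\bigl[|p_\eta(X) - p_{\eta'}(X)|\bigr] \le \prob\bigl[|\eta(X) - t| \le \delta\bigr]\]
for each of the four entries $(i,j)$. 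Taking the maximum over entries gives the desired bound on $\|\cdot\|_\infty$.

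There is essentially no real obstacle here; the only mild subtlety is handling the atomic case $\eta(x) = t$ correctly, which is why the pointwise bound is stated with a (non-strict) inequality $|\eta(x) - t| \le \delta$ on the indicator. This is also what ensures the inequality is sharp for a degenerate $\eta'$ that shifts mass across the threshold.
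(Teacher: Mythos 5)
Your proof is correct and follows essentially the same route as the paper's: both arguments reduce to the observation that the two classifiers can only disagree at $x$ when $t$ lies between $\eta(x)$ and $\eta'(x)$, which forces $|\eta(x)-t|\le\|\eta-\eta'\|_\infty$, and then integrate. If anything, your explicit pointwise bound $|p_\eta(x)-p_{\eta'}(x)|\le 1\{|\eta(x)-t|\le\delta\}$ handles the stochastic tie-breaking at $\eta(x)=t$ more carefully than the paper's terse one-line calculation, which is written as if the classifier were deterministic at the threshold.
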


Intuitively, Lemma~\ref{lemma:approximation_error} bounds the largest difference in the confusion matrices of $\hat Y_{p,t,\eta}$ and $\hat Y_{p,t,\eta'}$ by the probability that the threshold $t$ lies between $\eta$ and $\eta'$. As we will show later, under a margin assumption, this can be bounded by the $\L_\infty$ distance between $\eta$ and $\eta'$.

Our next lemma bounds the worst-case error over thresholds $(t, p) \in [0, 1]$ of the empirical confusion matrix. 
This allows us to bound error due to using an empirical threshold $(\hat t, \hat p)$ instead of the threshold $(t^*, p^*)$ that is optimal for the true regression function.
\begin{lemma}
    Let $\eta : \X \to [0, 1]$ be any regression function. Then, with probability at least $1 - \delta$,
    \[\sup_{p, t \in [0, 1]} \left\| \hat C_{\hat Y_{p,t,\eta}} - C_{\hat Y_{p,t,\eta}} \right\|_\infty
        \leq \sqrt{\frac{8}{n} \log \frac{32(2n + 1)}{\delta}}.\]
    \label{lemma:estimation_error}
\end{lemma}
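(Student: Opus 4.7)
The plan is to reduce $\sup_{p,t} \|\hat C_{\hat Y_{p,t,\eta}} - C_{\hat Y_{p,t,\eta}}\|_\infty$ to a uniform convergence statement over $(p, t) \in [0,1]^2$ for classes of $\{0, 1\}$-valued functions of small VC dimension, and then invoke a Vapnik--Chervonenkis-type concentration inequality.

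First, I would make the classifier's internal randomness explicit via coupling. Introduce auxiliary i.i.d.\ variables $U_1, \dots, U_n \sim \mathrm{Uniform}[0,1]$, drawn independently of $(X_i, Y_i)_{i=1}^n$, so that
\[\hat Y_{p, t, \eta}(X_i) = \mathbf{1}\{\eta(X_i) > t\} + \mathbf{1}\{\eta(X_i) = t,\, U_i \leq p\}.\]
Under this coupling, each of the four entries of $\hat C_{\hat Y_{p, t, \eta}}$ becomes an empirical average $\tfrac{1}{n} \sum_i f_{p, t}(X_i, Y_i, U_i)$ of a $\{0, 1\}$-valued function of the i.i.d.\ triples $(X_i, Y_i, U_i)$, whose population mean is the corresponding entry of $C_{\hat Y_{p, t, \eta}}$. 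For example, the true-positive entries correspond to $f_{p, t}^{\mathrm{TP}}(x, y, u) = y \cdot \mathbf{1}\{\eta(x) > t\ \text{or}\ (\eta(x) = t\ \text{and}\ u \leq p)\}$, and the other three entries are analogous. The problem therefore reduces to controlling four empirical processes indexed by $(p, t)$.

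Next, I would bound the growth function of each class. The key structural observation is that the set $A_{p, t} := \{(x, u) : \eta(x) > t\ \text{or}\ (\eta(x) = t, u \leq p)\}$ is exactly the upper-threshold set at $(t, -p)$ of the lexicographic order on pairs $(\eta(x), -u)$. Consequently, as $(t, p)$ varies over $[0, 1]^2$, the family $\{A_{p, t}\}$ is totally ordered under inclusion, so it forms a VC class of dimension $1$, and its growth function is linear in the sample size---indeed, on any $m$ points it is bounded by $m + 1$, giving $S(2n) \leq 2n + 1$. Restricting by $y \in \{0, 1\}$ (or $1 - y$ for the other entries) does not inflate this bound. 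Applying the classical Vapnik--Chervonenkis uniform convergence inequality---in a symmetrized form $\prob[\sup_f |\hat P f - P f| > \epsilon] \leq c_1 S(2n) \exp(-c_2 n \epsilon^2)$ (e.g., Devroye--Gy\"orfi--Lugosi, Theorem~12.5, or a variant thereof)---to each of the four function classes, and taking a union bound over the four entries, yields a tail bound of the form $c_3 (2n + 1) \exp(-c_4 n \epsilon^2)$; inverting this at failure probability $\delta$ produces the stated bound.

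The main obstacle is the stochasticity of $\hat Y_{p, t, \eta}$: absent the coupling step, $\hat C_{\hat Y_{p, t, \eta}}$ is not a function of the data alone and classical empirical process tools do not directly apply. A secondary subtlety is that the parameter space $(p, t)$ is two-dimensional, so one must verify that the combined dependence still yields a \emph{one}-dimensional chain of sets---which is precisely what the lex-order observation accomplishes---rather than a class of VC dimension $2$, which would introduce an extraneous $\log n$ factor in the final rate.
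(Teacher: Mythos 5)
Your proposal is correct and takes essentially the same route as the paper: the paper likewise couples the classifier's internal randomness via uniform variables $Z_i$, observes (its Example on the ``stochastic growth function'' of RTCs) that the threshold sets form a chain under the lexicographic order on $(\eta(x), -z)$ so that at most $n+1$ dichotomies arise, and then applies symmetrization plus a union bound over the $2n+1$ classifications and the confusion-matrix entries. The only difference is cosmetic: the paper derives the exact constants $32(2n+1)$ and $8/n$ by running the symmetrization--McDiarmid argument explicitly rather than citing an off-the-shelf VC inequality ``or a variant thereof.''
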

Lemma~\ref{lemma:estimation_error} follows from Vapnik-Chervonenkis (VC) bounds on the complexity of the set $\{\hat Y_{p,t,\eta} : p, t \in [0, 1]\}$ of possible RTCs with fixed regression function $\eta$. In fact, Appendix~\ref{app:relative_performance_guarantees} proves a more general bound on the error between empirical and true confusion matrices uniformly over any family $\F$ of stochastic classifiers in terms of the growth function of $\F$. Consequently, when $\F$ has finite VC dimension, we obtain uniform convergence at the fast rate $\sqrt{\log(n/\delta)/n}$. As we formalize later, this suggests that the difficulty in tuning an imbalanced classifier to optimize a CMM $M$ comes not from difficulty in estimating the confusion matrix but rather from the sensitivity of commonly used CMMs to the selected threshold.
Because Theorem~\ref{thm:generalized_bayes} shows that any CMM can be optimized by a RTC, we state here only the specific result for RTCs.

Before combining Lemmas~\ref{lemma:approximation_error} and \ref{lemma:estimation_error} to give the main result of this section, we note a margin assumption, which characterizes separation between the two classes:

\begin{definition}[Tsybakov Margin Condition]
    Let $C, \beta \geq 0$, $t \in (0, 1)$. A classification problem with covariate distribution $P_X$ and regression function $\eta$ satisfies a \emph{$(C, \beta)$-margin condition around $t$} if, for any $\epsilon > 0$, $\prob \left[ |\eta(X) - t| \leq \epsilon \right] \leq C \epsilon^\beta$.
\end{definition}

The Tsybakov margin condition, introduced by \citet{mammen1999smooth} for $t = 0.5$, is widely used to establish convergence rates for classification in terms of accuracy~\citep{audibert2007fast,arlot2011margin,chaudhuri2014rates}.
Together with the margin condition and a Lipschitz condition on the $M$, Lemmas~\ref{lemma:approximation_error} and \ref{lemma:estimation_error} give the following bound on sub-optimality of an RTC if the threshold is selected by maximizing $M$ over the empirical confusion matrix:

\begin{corollary}
    Let $\eta : \X \to [0, 1]$ be the true regression function and $\hat\eta : \X \to [0, 1]$ be any regressor.
    \begin{align*}
        \text{ Let } \quad
        \left( \hat p, \hat t \right) := \argmax_{(t, p) \in [0, 1]^2} M \left( \hat C_{\hat Y_{p,t,\hat\eta}} \right)
        \quad \text{and} \quad \left( p^*, t^* \right) := \argmax_{(t, p) \in [0, 1]^2} M \left(C_{\hat Y_{p,t,\eta}} \right)
    \end{align*}
    denote the empirical and true optimal thresholds, respectively. Suppose $M$ is Lipschitz continuous with constant $L_M$ with respect to the uniform ($\L_\infty$) metric on $\C$. Finally, suppose $P_X$ and $\eta$ satisfy a $(C, \beta)$-margin condition around $t^*$. Then, with probability $\geq 1 - \delta$,
    \begin{align*}
        M\left(C_{\hat Y_{p^*,t^*,\eta}}\right) - M\left(C_{\hat Y_{\hat p,\hat t,\hat\eta}}\right)
          \leq L_M \left( C\left\|\eta - \hat\eta\right\|_\infty^\beta + 2 \sqrt{\frac{8}{n} \log \frac{32(2n + 1)}{\delta}} \right).
    \end{align*}
    \label{corr:CMM_error_decomposition}
\end{corollary}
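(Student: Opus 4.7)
The plan is to prove the corollary by inserting an intermediate classifier and decomposing the suboptimality into an approximation error (from replacing $\eta$ by $\hat\eta$) and an estimation error (from selecting an empirical threshold). The crucial choice in the decomposition is which intermediate classifier to insert; since the margin condition is assumed around $t^*$ only, I want the approximation-error term to hold the threshold fixed at $(p^*,t^*)$, not at $(\hat p,\hat t)$.

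Concretely, I would write
\[
M\bigl(C_{\hat Y_{p^*,t^*,\eta}}\bigr)-M\bigl(C_{\hat Y_{\hat p,\hat t,\hat\eta}}\bigr)
= \underbrace{\bigl[M(C_{\hat Y_{p^*,t^*,\eta}})-M(C_{\hat Y_{p^*,t^*,\hat\eta}})\bigr]}_{\text{(A) approximation}}
+ \underbrace{\bigl[M(C_{\hat Y_{p^*,t^*,\hat\eta}})-M(C_{\hat Y_{\hat p,\hat t,\hat\eta}})\bigr]}_{\text{(B) estimation}}.
\]
For term (A), the $L_M$-Lipschitzness of $M$ in the $\ell_\infty$ metric together with Lemma~\ref{lemma:approximation_error} (applied with $t=t^*$, $\eta'=\hat\eta$) gives
\[
\text{(A)} \leq L_M\,\prob\!\left[|\eta(X)-t^*|\leq \|\eta-\hat\eta\|_\infty\right]\leq L_M C\,\|\eta-\hat\eta\|_\infty^{\beta},
\]
where the second inequality invokes the $(C,\beta)$-margin condition around $t^*$ with $\epsilon=\|\eta-\hat\eta\|_\infty$.

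For term (B), I would introduce the empirical confusion matrices at both thresholds and use the optimality of $(\hat p,\hat t)$ on the empirical side. Writing
\begin{align*}
\text{(B)} &= \bigl[M(C_{\hat Y_{p^*,t^*,\hat\eta}})-M(\hat C_{\hat Y_{p^*,t^*,\hat\eta}})\bigr]\\
&\quad + \bigl[M(\hat C_{\hat Y_{p^*,t^*,\hat\eta}})-M(\hat C_{\hat Y_{\hat p,\hat t,\hat\eta}})\bigr]+ \bigl[M(\hat C_{\hat Y_{\hat p,\hat t,\hat\eta}})-M(C_{\hat Y_{\hat p,\hat t,\hat\eta}})\bigr],
\end{align*}
the middle bracket is $\leq 0$ by definition of $(\hat p,\hat t)$, and the outer two brackets are each bounded by $L_M\sup_{p,t}\|\hat C_{\hat Y_{p,t,\hat\eta}}-C_{\hat Y_{p,t,\hat\eta}}\|_\infty$. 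Applying Lemma~\ref{lemma:estimation_error} to the fixed regressor $\hat\eta$ (which is data-independent in the statement's setup; otherwise one would condition on it or take a union bound) yields that with probability at least $1-\delta$ this supremum is at most $\sqrt{(8/n)\log(32(2n+1)/\delta)}$, so $\text{(B)}\leq 2L_M\sqrt{(8/n)\log(32(2n+1)/\delta)}$. Summing the bounds on (A) and (B) gives the claim.

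The main obstacle is the asymmetry introduced by the margin assumption: Lemma~\ref{lemma:approximation_error} only delivers an $\ell_\infty$ gap of the confusion matrices through a probability near some threshold, and that threshold must be one around which the margin condition holds. Had I decomposed through $\hat Y_{\hat p,\hat t,\eta}$ instead, I would have needed a margin condition around the random $\hat t$, which is not assumed. A secondary technical point is the application of Lemma~\ref{lemma:estimation_error} to $\hat\eta$: if $\hat\eta$ depends on the same sample used to evaluate $\hat C$, one must either treat the bound as conditional on $\hat\eta$ (valid since the VC-type bound in Lemma~\ref{lemma:estimation_error} is distribution-free over the family $\{\hat Y_{p,t,\hat\eta}:p,t\in[0,1]\}$ for any fixed $\hat\eta$) or invoke a sample-splitting convention; either way, no changes to the stated bound are required.
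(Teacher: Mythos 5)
Your proposal is correct and follows essentially the same argument as the paper: the same three-term decomposition through $C_{\hat Y_{p^*,t^*,\hat\eta}}$ and the empirical confusion matrices, with the middle empirical comparison discarded by optimality of $(\hat p,\hat t)$, Lemma~\ref{lemma:approximation_error} plus the margin condition at $t^*$ controlling the approximation term, and Lemma~\ref{lemma:estimation_error} controlling the two empirical-vs-true terms. Your added remark about conditioning on $\hat\eta$ when applying Lemma~\ref{lemma:estimation_error} is a reasonable clarification that the paper leaves implicit.
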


\section{Uniform Error of the \texorpdfstring{$k$}{k}NN Regressor}
\label{sec:uniform_error}
In the previous section, we bounded relative performance of an RTC in terms of uniform ($\L_\infty$) loss of the regression function estimate. 
Here, we bound uniform loss of one such regressor, the widely used $k$-nearest neighbor ($k$NN) regressor.
Our analyses include a parameter $r$, introduced in Section~\ref{subsec:uniform_class_imbalance}, that characterizes a novel sub-type of class imbalance, which we call Uniform Class Imbalance. 
This leads to insights about how the behavior of the $k$NN classifier depends not only on the degree, but also on the structure, of class imbalance in a given dataset. We begin with some notation:

\begin{definition}[$k$-Nearest Neighbor Regressor]
    Given a point \(x \in \X\), let $\sigma(x)$ denote a permutation of $\{1,...,n\}$ such that
    % order the training data \(X_{\sigma_{1}(x)}, \ldots, X_{\sigma_{n}(x)}\) such that
    % $\rho\left(X_{\sigma_{1}(x)}, \; x\right) \leq \ldots \leq \rho\left(X_{\sigma_{n}(x)}, \; x\right);$
    % i.e., 
    $X_{\sigma_i(x)}$ is the $i^{th}$-nearest neighbor of $x$ among $X_1,...,X_n$.
    For integers $k \in [1, n]$, the \emph{$k$NN regressor} $\hat\eta_k : \X \to [0, 1]$ is defined as
    % the proportion of $x$'s $k$-nearest neighbors in class $1$:
    \begin{align}
        \hat\eta_k(x) = \frac{1}{k} \sum_{i = 1}^k Y_{\sigma_i(x)},
        \quad \text{ for all } x \in \X.
        \label{eqn:KNNRegressor}
    \end{align}
\end{definition}

% \paragraph{Uniform Class Imbalance}
\label{subsec:uniform_class_imbalance}
We now formalize a novel sub-type of class imbalance:
\begin{definition}[Uniform Class Imbalance (UCI)]
Write the regression function as $\eta = r \zeta$, where $r \in (0, 1]$ and $\zeta : \X \to [0, 1]$ is a regression function with $\sup_{x \in \X} \zeta(x) = 1$. 
A classification problem has Uniform Class Imbalance (UCI) in the number of samples \(n\) if \(r \to 0\) as \(n \to \infty\).
\label{definition:uniformClassImbalance}
\end{definition}
Intuitively, in UCI, the class $Y = 1$ is rare regardless of $X$.
This includes ``difficult'' classification problems where the covariate $X$ provides only partial information about the class $Y$ and examples from the rare class lie deep within the distribution of the common class.
Examples include rare disease diagnosis~\citep{schaefer2020use} or fraud detection~\citep{awoyemi2017credit}. 
In practice, the classifier's role is often to flag ``high-risk'' samples $X$, those with $\eta(X)$ relatively high, for follow-up investigation.
UCI can be distinguished from ``easier'' problems in which, for some $x \in \X$, $\eta(X) \approx 1$ and so, given enough training data, a classifier can confidently assign the label $Y = 1$.
These include well-separated classes or deterministic problems (e.g., protein structure prediction; \citet{noe2020machine}).

To our knowledge, such notions of class imbalance have not previously been distinguished. 
In the particular case of data drawn from a logistic model, UCI reduces to the notion of class imbalance described in \citet{wang2020logistic}; however, UCI applies in more general contexts.

\subsection{Uniform Risk Bounds}
We now present bounds (proven in Appendix~\ref{app:knn_upper_bound_proofs}) on uniform error $\left\|\eta - \hat\eta\right\|_\infty = \sup_{x \in \X} |\eta(x) - \hat\eta(x)|$ of the $k$NN regressor $\hat\eta_k$. First, recall two standard quantities, covering numbers and shattering coefficients, by which we measure complexity of the feature space:

\begin{definition}[Covering Number]
Suppose $(\X, \rho)$ is a totally bounded metric space. Then, for any $\epsilon > 0$, the \emph{$\epsilon$-covering number} $N(\epsilon)$ of $(\X, \rho)$ is the smallest integer such that there exist $N(\epsilon)$ points $x_1,...,x_{N(\epsilon)} \in X$ satisfying
$\X \subseteq \bigcup_{i = 1}^{N(\epsilon)} B(x_i, \epsilon)$.
\end{definition}

\begin{definition}[Shattering Coefficient]
For integers $n > 0$,
the \emph{shattering coefficient of balls} in $(\X, \rho)$ is $\displaystyle S(n) = \sup_{x_1,...,x_n \in \X} \left| \left\{ \{x_1,...,x_n\} \cap B(x, \epsilon) : x \in \X, \epsilon \geq 0 \right\} \right|$.
\end{definition}

We now state two assumptions data distribution $P_{X, Y}$:

\begin{assumption}[Dense Covariates Assumption]
    For some $p_*,\epsilon^*,d > 0$, the marginal distribution $P_X$ of covariates is lower bounded, for any $x \in \X$ and $\epsilon \in (0,\epsilon^*]$, by $P_X(B_\epsilon(x)) \geq p_* \epsilon^d$.
    \label{assumption:denseCovariates}
\end{assumption}

Assumption~\ref{assumption:denseCovariates} ensures that each query point's nearest neighbors are sufficiently near to be informative. We also assume that the regression function $\zeta$ is smooth:

\begin{assumption}[H\"older Continuity]
    For some $\alpha \in (0, 1]$, $L := \sup_{x \neq x' \in \X} \frac{\left| \zeta(x) - \zeta(x') \right|}{\rho^\alpha(x, x')} < \infty$.
    \label{assumption:holderContinuity}
\end{assumption}
We now state our upper bound on uniform error:

\begin{theorem}
    Under Assumptions~\ref{assumption:denseCovariates} and \ref{assumption:holderContinuity}, whenever $k / n \leq p_*(\epsilon^*)^d / 2$,
    for any
    $\delta > 0$, with probability at least $1 - N\left( \left( 2k / (p_* n) \right)^{1/d} \right) e^{-k/4} - \delta$,
    we have
    \begin{align}
        \left\|\eta - \hat\eta\right\|_\infty
          \leq 2^\alpha Lr\left( \frac{2k}{p_* n} \right)^{\alpha/d}
          + \frac{2}{3k} \log \frac{2 S(n)}{\delta} + \sqrt{\frac{2r}{k} \log \frac{2 S(n)}{\delta}}.
        \label{eq:uniform_error_bound}
    \end{align}
    If $r \in O \left( (\log S(n)/n \right)$, this bound is minimized by $k \asymp n$, giving $\left\|\eta - \hat\eta\right\|_\infty \in O_P \left( (\log S(n))/n \right)$.
    Otherwise,
    % under a mild simplifying assumption that $N(\epsilon)$ increases at most polynomially with $1/\epsilon$,
    % In particular, if $r \in \omega \left( (\log S(n)/n \right)$,
    this bound is minimized by $k \asymp n^{\frac{2\alpha}{2\alpha+d}} (\log S(n))^{\frac{d}{2\alpha+d}} r^{-\frac{d}{2\alpha + d}}$,
    giving
    \[\left\|\eta - \hat\eta\right\|_\infty \in O_P \left( \left( (\log S(n))/n \right)^\frac{\alpha}{2\alpha + d} r^\frac{\alpha + d}{2\alpha + d} \right).\]
    \label{thm:unif_convergence}
\end{theorem}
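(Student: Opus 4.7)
The plan is to decompose the error $\hat\eta_k(x) - \eta(x)$ into a bias term $\frac{1}{k}\sum_{i=1}^k \eta(X_{\sigma_i(x)}) - \eta(x)$ and a variance term $\hat\eta_k(x) - \frac{1}{k}\sum_{i=1}^k \eta(X_{\sigma_i(x)})$, then control each uniformly in $x$. All three terms in the bound then map naturally onto these: the $r(2k/(p_*n))^{\alpha/d}$ term is the bias (note $\eta = r\zeta$ and $\zeta$ is H\"older), while the Bernstein-style terms of orders $1/k$ and $\sqrt{r/k}$ come from a variance bound using the fact that the Bernoulli responses $Y_i$ have variance at most $\E[Y_i^2]\leq r$.

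First I would control the $k$NN radius $R_k(x) := \rho(x, X_{\sigma_k(x)})$ uniformly in $x$. By the Dense Covariates assumption, any ball of radius $\epsilon_n := (2k/(p_* n))^{1/d}$ has $P_X$-mass at least $2k/n$, so a one-sided Chernoff/Bernstein bound on a $\text{Bin}(n, 2k/n)$ count shows the ball contains at least $k$ samples with probability at least $1 - e^{-k/4}$. I would cover $\X$ by $N(\epsilon_n)$ balls of radius $\epsilon_n$, union bound over cover centers, and conclude via the triangle inequality that $R_k(x)\le 2\epsilon_n$ for every $x\in\X$ with probability at least $1 - N(\epsilon_n)e^{-k/4}$. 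Combined with Assumption~\ref{assumption:holderContinuity} and $\eta = r\zeta$, this yields the bias bound $|\frac{1}{k}\sum_i \eta(X_{\sigma_i(x)}) - \eta(x)| \le Lr(2\epsilon_n)^\alpha = 2^\alpha Lr(2k/(p_*n))^{\alpha/d}$ uniformly in $x$.

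Next I would handle the variance term. Conditional on $X_1,\ldots,X_n$, the responses $Y_{\sigma_i(x)}$ are independent Bernoulli random variables with means bounded by $r$, so Bernstein's inequality gives, for each fixed $x$ and each $\delta'>0$,
\[\left|\hat\eta_k(x) - \frac{1}{k}\sum_{i=1}^k \eta(X_{\sigma_i(x)})\right| \le \frac{2}{3k}\log\frac{2}{\delta'} + \sqrt{\frac{2r}{k}\log\frac{2}{\delta'}}\]
with probability at least $1-\delta'$. The key observation to upgrade this to a uniform statement is that $\hat\eta_k(x)$ depends on $x$ only through the unordered $k$NN set $\{X_{\sigma_1(x)},\ldots,X_{\sigma_k(x)}\}$, and each such set is a subset of $\{X_1,\ldots,X_n\}$ of the form $\{X_1,\ldots,X_n\}\cap B(x,R_k(x))$; the number of distinct such subsets is at most $S(n)$ by definition of the shattering coefficient. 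Hence I can take $\delta' = \delta/S(n)$ and union bound to obtain the stated variance terms uniformly in $x$.

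Combining the two bounds via the triangle inequality gives inequality~\eqref{eq:uniform_error_bound} with the stated failure probability. The optimization of $k$ is then purely algebraic: treating the bias as $\Theta(r(k/n)^{\alpha/d})$ and the dominant stochastic term as $\Theta(\sqrt{r\log S(n)/k})$ and equating them yields the stated $k \asymp n^{2\alpha/(2\alpha+d)}(\log S(n)/r)^{d/(2\alpha+d)}$ regime, while in the very-imbalanced regime $r\in O(\log S(n)/n)$ the $1/k$ term dominates the $\sqrt{r/k}$ term, so one simply takes $k\asymp n$. I expect the main obstacle to be the uniform-in-$x$ argument: both the radius control (requiring an $\epsilon$-net matched to the target radius $\epsilon_n$ so the net cost $N(\epsilon_n)$ interacts correctly with the exponent $e^{-k/4}$) and the variance control (requiring the shattering-coefficient bound on distinct $k$NN sets rather than a naive net on $\X$, since any $x$-dependent net would lose an unwanted factor).
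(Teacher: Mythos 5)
Your proposal is correct and follows essentially the same route as the paper's proof: the same bias/variance decomposition around $\tilde\eta_k(x)=\frac1k\sum_j\eta(X_{\sigma_j(x)})$, the same covering-plus-Chernoff control of the $k$NN radius at scale $(2k/(p_*n))^{1/d}$, and the same Bernstein inequality with a union bound over the at most $S(n)$ distinct nearest-neighbor index sets. No gaps to report.
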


Of the three terms in \eqref{eq:uniform_error_bound}, the first term, of order $r(k/n)^{\alpha/d}$, comes from smoothing bias of the $k$NN classifier. 
The second and third terms are due to label noise, with the second term dominating under extreme class imbalance $r \in O \left(\log S(n) / n \right)$ and the third term dominating otherwise. 
Theorem~\ref{eq:uniform_error_bound} shows that, under UCI, one should use a much larger choice of the tuning parameter $k$ than in the case of balanced classes; indeed, setting $k \asymp n^{\frac{2\alpha}{2\alpha+d}} (\log S(n))^{\frac{d}{2\alpha+d}}$, which is optimal in the balanced case, gives a rate that is suboptimal by a factor of $r^{-d/(4\alpha + d)}$.
% Intuitively, more neighbors are needed to obtain enough samples from the rare class to make a reliable prediction.

The following example demonstrates how to apply Theorem~\ref{thm:unif_convergence} in a concrete setting of interest:

\begin{corollary}[Euclidean, Absolutely Continuous Case]
Suppose $(\X,\rho) = ([0,1]^d,\|\cdot\|_2)$ is the unit cube in $\R^d$, equipped with the Euclidean metric, and $P_X$ has a density that is lower bounded away from $0$ on $\X$.
Then,
% $N(\epsilon) \leq (2/\epsilon)^d$ and $S(n) \leq 2n^{d + 1} + 2$, and so,
for $k \asymp n^{\frac{2\alpha}{2\alpha+d}} (\log n)^{\frac{d}{2\alpha+d}} r^{-\frac{d}{2\alpha + d}}$,
% by Theorem~\ref{thm:unif_convergence},
$\left\|\eta - \hat\eta\right\|_\infty
  \in O_P \left( \left( (\log n)/n \right)^{\frac{\alpha}{2\alpha+d}} r^\frac{\alpha + d}{2\alpha + d} \right)$.
  \label{corrollary:euclidean_AC_uniform_rate}
\end{corollary}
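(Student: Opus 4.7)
The plan is to specialize Theorem~\ref{thm:unif_convergence} to the setting of the corollary by (i) verifying Assumption~\ref{assumption:denseCovariates} from the density lower bound, (ii) computing $N(\epsilon)$ and $S(n)$ explicitly for Euclidean balls in $[0,1]^d$, and then (iii) substituting into the rate formula. Assumption~\ref{assumption:holderContinuity} is inherited directly as part of the hypotheses of Theorem~\ref{thm:unif_convergence}; only the two quantitative geometric facts and Assumption~\ref{assumption:denseCovariates} must be checked.

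For Assumption~\ref{assumption:denseCovariates}: let $c > 0$ be a lower bound for the density of $P_X$ on $[0,1]^d$, and let $V_d$ denote the volume of the unit Euclidean ball. For any $x \in [0,1]^d$ and $\epsilon \in (0,1]$, the intersection $B(x,\epsilon) \cap [0,1]^d$ contains a cube-orthant of the ball, so has Lebesgue measure at least $2^{-d} V_d \epsilon^d$. Hence $P_X(B(x,\epsilon)) \geq p_* \epsilon^d$ with $p_* := c V_d 2^{-d}$ and $\epsilon^* := 1$. For the covering number, a standard grid argument gives $N(\epsilon) \leq (\lceil \sqrt{d}/\epsilon \rceil)^d = O(\epsilon^{-d})$. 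For the shattering coefficient, closed Euclidean balls in $\R^d$ form a VC class of dimension at most $d+2$ (via lifting to halfspaces in $\R^{d+1}$), so Sauer's lemma yields $S(n) \leq (n+1)^{d+2}$, and in particular $\log S(n) = O(\log n)$.

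Substituting $\log S(n) = O(\log n)$ into the minimizing choice from Theorem~\ref{thm:unif_convergence} gives $k \asymp n^{2\alpha/(2\alpha+d)} (\log n)^{d/(2\alpha+d)} r^{-d/(2\alpha+d)}$ and the claimed rate
\[
\left\|\eta - \hat\eta\right\|_\infty \in O_P\!\left( \left( (\log n)/n \right)^{\alpha/(2\alpha+d)} r^{(\alpha+d)/(2\alpha+d)} \right).
\]
It only remains to confirm that the failure probability in Theorem~\ref{thm:unif_convergence} tends to zero and that $k/n \leq p_*(\epsilon^*)^d/2$ eventually. With the above choice of $k$, we have $k/n \asymp n^{-d/(2\alpha+d)} (\log n)^{d/(2\alpha+d)} r^{-d/(2\alpha+d)} \to 0$ whenever $r$ does not decay faster than polynomially (and in any case for $n$ large enough relative to $r$), so the bound $k/n \leq p_*(\epsilon^*)^d/2$ holds. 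Moreover, with $\epsilon_n := (2k/(p_* n))^{1/d} \asymp (k/n)^{1/d}$, we have $N(\epsilon_n) = O((n/k)) = O(n)$ polynomially, while $k$ grows polynomially in $n$, so $N(\epsilon_n) e^{-k/4} \to 0$ super-polynomially. Taking $\delta = \delta_n \to 0$ slowly (e.g., $\delta_n = 1/n$) absorbs the remaining $\delta$ term and yields the $O_P$ conclusion.

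The main obstacle is bookkeeping rather than any new idea: one must keep the $n$-dependent parameter $r$ explicit throughout so that the choice of $k$ and the verification of $k/n \to 0$ remain valid as $r$ shrinks, and one must confirm that both the covering-number term $N(\epsilon_n) e^{-k/4}$ and the $\delta$ term can be pushed to zero simultaneously. The VC bound on balls and the elementary volume argument for Assumption~\ref{assumption:denseCovariates} are the only nontrivial geometric inputs, and both are standard.
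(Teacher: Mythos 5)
Your proposal is correct and follows essentially the same route the paper takes: verify the dense-covariates assumption via a volume/orthant argument, note that $N(\epsilon) = O(\epsilon^{-d})$ and that balls form a VC class so $\log S(n) = O(\log n)$, and substitute into the rate of Theorem~\ref{thm:unif_convergence} (the paper's analogous manifold corollary in the appendix does exactly this bookkeeping). Your additional checks that $k/n \to 0$ and that the failure probability vanishes are sound and, if anything, slightly more careful than the paper's own treatment.
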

The most problematic term in this bound is the exponential dependence on the dimension $d$ of the covariates. Fortunately, since Theorem~\ref{thm:unif_convergence} utilizes covering numbers, it improves if the covariates exhibit structure, such as that of a low-dimensional manifold. We illustrate this in detail in Appendix~\ref{app:knn_upper_bound_proofs}.

We close with a minimax lower bound, proven in Appendix~\ref{app:knn_lower_bound_proofs}, on the uniform error of any estimator, over $(\alpha, L)$-H\"older regression functions. Up to a polylogarithmic factor in $r$, the rate of this lower bound matches that in Theorem~\ref{thm:unif_convergence}, suggesting that both bounds are quite tight.
\begin{theorem}
    Suppose $\X = [0,1]^d$ is the $d$-dimensional unit cube and $X \sim \operatorname{Uniform}(\X)$. Let $\Sigma^\alpha(L)$ denote the family of $(\alpha, L)$-H\"older continuous regression function.
    Then, there exist constants $n_0$ and $c > 0$ (depending only on $\alpha$, $L$, and $d$) such that, for all $n \geq n_0$ and any estimator $\hat \eta$,
    \[\sup_{\zeta \in \Sigma^\alpha(L)} \mathop{\prob}
    \left[ \left\|\eta - \hat \eta \right\|_\infty \geq c \left( \frac{\log(nr)}{n} \right)^{\frac{\alpha}{2\alpha + d}} r^\frac{\alpha + d}{2\alpha + d} \right] \geq \frac{1}{8}.\]
    \label{theorem:UniformErrorLowerBound}
\end{theorem}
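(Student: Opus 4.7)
The plan is to apply Fano's inequality to a finite packing of regression functions consisting of localized bump perturbations of a constant baseline. Specifically, for a grid scale $h > 0$ to be chosen, partition $[0,1]^d$ into $m \asymp h^{-d}$ sub-cubes $B_1, \ldots, B_m$ of side length $h$, and fix a smooth bump $\phi : \R^d \to [0,1]$ supported in $[0,1]^d$, with $\|\phi\|_\infty = 1$, that is $(\alpha, L/2)$-H\"older with respect to $\|\cdot\|_2$. For each $i \in \{0, 1, \ldots, m\}$ define $\zeta_i(x) := \tfrac{1}{2} + \beta h^\alpha \mathbf{1}\{i \geq 1\} \phi((x - x_i)/h)$, where $x_i$ is the corner of $B_i$ and $\beta > 0$ is a small constant chosen so each $\zeta_i$ is $(\alpha, L)$-H\"older. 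For $h$ small enough, each $\zeta_i$ takes values in $[1/2, 1]$, so $\eta_i := r \zeta_i$ is a valid Bernoulli regression function with $\zeta_i \in \Sigma^\alpha(L)$; moreover, $\|\eta_i - \eta_j\|_\infty \geq \beta r h^\alpha$ for all $i \neq j$.

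Next I would bound the KL divergence between the data-generating distributions $P_i^n$ corresponding to $\eta_i$. Since $\eta_i$ and $\eta_j$ agree outside $B_i \cup B_j$, which has $P_X$-measure at most $2h^d$ under the uniform distribution, and since $KL(\mathrm{Ber}(p), \mathrm{Ber}(q)) \leq (p-q)^2 / (q(1-q))$, the baseline choice $1/2$ keeps both probabilities in $[r/2, r]$ so $q(1-q) \gtrsim r$. Using $(p - q)^2 \leq (\beta r h^\alpha)^2$ gives a per-sample KL of order $r h^{2\alpha}$ on $B_i \cup B_j$, hence $KL(P_i^n, P_j^n) \leq C n r h^{2\alpha + d}$ for some constant $C$ depending only on $\alpha, L, d$.

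I would then apply Fano's inequality to these $M = m + 1 \asymp h^{-d}$ hypotheses: the minimax testing probability is at least $1 - (C n r h^{2\alpha + d} + \log 2)/\log M$, and any $\hat\eta$ with $\|\eta_i - \hat\eta\|_\infty < \beta r h^\alpha / 2$ must correctly identify $i$. Choosing $h$ to satisfy the implicit equation $n r h^{2\alpha + d} \asymp d \log(1/h)$ balances these quantities, with self-consistent solution $h \asymp (\log(nr)/(nr))^{1/(2\alpha + d)}$, valid whenever $nr$ exceeds a threshold depending only on $\alpha, L, d$. Substituting back, the separation $\beta r h^\alpha$ becomes proportional to $r^{(\alpha + d)/(2\alpha + d)} (\log(nr)/n)^{\alpha/(2\alpha + d)}$, and by absorbing constants one arranges the minimax testing error to exceed $1/8$.

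The main obstacle will be threading the $r$-dependence cleanly through the Bernoulli KL bound, which is what produces the $\log(nr)$ factor (rather than $\log n$) and the specific exponent $(\alpha + d)/(2\alpha + d)$ on $r$ in the final rate. Because the variance $p(1-p)$ of a Bernoulli$(p)$ variable with $p \asymp r$ scales linearly in $r$, the per-sample KL scales as $r h^{2\alpha}$ rather than the $h^{2\alpha}$ one would get from fixed-variance Gaussian noise; this linear-in-$r$ KL is precisely what forces the self-consistent $h$ to involve $\log(nr)$ rather than $\log n$. Verifying the choice of $h$ is admissible (small enough to yield many hypotheses while the baseline keeps $\eta$ bounded away from $0$ and $1$), and that the threshold $n_0$ can be chosen depending only on $\alpha, L, d$, will require some care, particularly in the boundary regime where $nr$ approaches the threshold below which the rate degenerates.
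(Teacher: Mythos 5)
Your proposal is correct and follows essentially the same route as the paper's proof: Fano's lemma applied to a family of disjointly supported bump perturbations of a constant baseline, with the key observation that the Bernoulli KL divergence scales linearly in $r$ (per-sample KL $\asymp r h^{2\alpha}$ on the perturbed cell), which yields the self-consistent bandwidth $h \asymp (\log(nr)/(nr))^{1/(2\alpha+d)}$ and hence the $\log(nr)$ factor and the $r^{(\alpha+d)/(2\alpha+d)}$ exponent. The only cosmetic differences are the baseline level ($1/2$ versus the paper's $1/4$) and the particular elementary inequality used to bound the Bernoulli KL.
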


\paragraph{Discussion}
Plugging the above upper bounds on $\left\|\eta - \hat \eta \right\|_\infty$ into Corollary~\ref{corr:CMM_error_decomposition} provides error bound under arbitrary CMMs, in terms of the sample size $n$, hyperparameter $k$, UCI degree $r$, and complexity parameters (margin $\beta$, smoothness $\alpha$, intrinsic dimension $d$, etc.) of $\X$ and $P_{X,Y}$.
Thus, these results collectively give some of the first complete finite-sample guarantees under general performance metrics used for imbalanced classification.
Our analysis shows that, under severe UCI, the optimal $k$ is much larger than in balanced classification, whereas this same $k$ leads to sub-optimal, or even inconsistent, estimates of the regression function under other (nonuniform) forms of class imbalance.

\section{Numerical Experiments}
We provide two numerical experiments to illustrate our results from Sections~\ref{sec:generalized_bayes} and \ref{sec:uniform_error}. 
We repeat each experiment $100$ times and present average results with $95\%$ confidence intervals computed using the central limit theorem.
% Python implementations are included as supplementary material.
Python implementations and instructions for reproducing each experiment can be found at \url{https://gitlab.tuebingen.mpg.de/shashank/imbalanced-binary-classification-experiments}. Further technical details regarding the experiments can be found in Appendix~\ref{app:further_experimental_details}, while Appendix~\ref{app:credit_card} explores some predictions of our theoretical results on real data from a credit card fraud detection problem.

\paragraph{Experiment 1}
Example~\ref{example:stochasticity_necessary} showed that, under general CMMs, deterministic RTCs are sometimes unable to approach optimal classification performance, necessitating stochastic RTCs. This experiment demonstrates this gap numerically.
Suppose $\X = [0, 1]$, over which $X$ is uniformly distributed, and for all $x \in \X$,
$\eta(x) = 0.5 \cdot 1\{1/3 \leq x < 2/3\} + 1\{2/3 \leq x\}$.
% \[\eta(x) = \left\{
%     \begin{array}{cc}
%         0 & \text{ if } x < 1/3 \\
%         0.5 & \text{ if } 1/3 \leq x < 2/3 \\
%         1 & \text{ if } 2/3 \leq x
%     \end{array}
% \right..\]

Consider the CMM $M(C) = \truepositive \cdot \truenegative$. Similar to the analysis in Example~\ref{example:stochasticity_necessary}, the optimal value of $M(C) = (5/12)^2 = \frac{25}{144}$ is achievable only by a stochastic classifier, whereas as deterministic classifiers achieve at most $M(C) = 1/6 < 25/144$.

For $10$ logarithmically spaced values of $n$ between $10^2$ and $10^4$, we drew $n$ independent samples of $(X, Y)$ according the above distribution. Using this training data, we selected optimal deterministic and stochastic thresholds $t \in [0, 1]$ and $(t, p) \in [0, 1]^2$ for the $k$NN classifier by maximizing $M(\hat C)$ over $10^4$ uniformly spaced values in $[0, 1]$ and $[0, 1]^2$, respectively. Since, in this example, $\alpha = d = 1$, we set $k = \lfloor n^{2/3} \rfloor$ as suggested by Theorem~\ref{corrollary:euclidean_AC_uniform_rate}. As another point of comparison, we also include a very different deterministic classifier, a random forest, trained with default parameters of Python's \texttt{scikit-learn} package. We estimated $M(C)$ using $1000$ more independently generated test samples of $(X, Y)$.
Figure~\ref{fig:experiment2} shows regret, i.e., sub-optimality of each classifier relative to the optimal classifier, in terms of $M(C)$.
Consistent with our analysis, regrets of the deterministic classifiers are bounded away from $0$, while regret of the stochastic classifier vanishes as $n$ increases.

\paragraph{Experiment 2}
This experiment demonstrates that making classifiers robust to severe class imbalance requires distinguishing different sub-types of class imbalance, such as UCI.
Suppose $\X = [0, 1]$, $X \sim \operatorname{Uniform}([0, 1])$, and $r \in (0, 1)$. 
Consider two regression functions $\eta_1(x) = r(1 - x)$ and $\eta_2(x) = \max\{0, 1 - x/r\}$. $\eta_1$ and $\eta_2$ exhibit the same overall class imbalance, with $r/2$ proportion of samples from class $1$. 
The regression function $\eta_1$ satisfies UCI of degree $r$, whereas $\eta_2$ does not satisfy a nontrivial degree of UCI.
For sufficiently small $r \in (0, 1)$, specifically $r \in o \left( n^{-d/(2\alpha + 2d)} \right)$, Theorem~\ref{thm:unif_convergence} gives that the optimal choice of $k$ under $\eta_1$ satisfies $k \in \omega(rn)$. 
On the other hand, if $k \in \omega(rn)$, then, under $\eta_2$, $\E \left[ \hat\eta_k(0) \right] \to 0$, so that $\hat\eta_k(0)$ is an inconsistent estimate of $\eta_2(0) = 1$.

%In complex scenarios involving multiple classes or multiple sub-types of class imbalance, this may necessitate using spatially adaptive classifiers (e.g., a $k$NN classifier with $k$ varying across $\X$) to achieve consistent classification; we leave analyses of such cases for future work.

\begin{figure}
    \centering
    \begin{subfigure}[b]{0.32\textwidth}
        \centering
        \includegraphics[width=\linewidth,trim={8mm 8mm 7mm 7mm},clip]{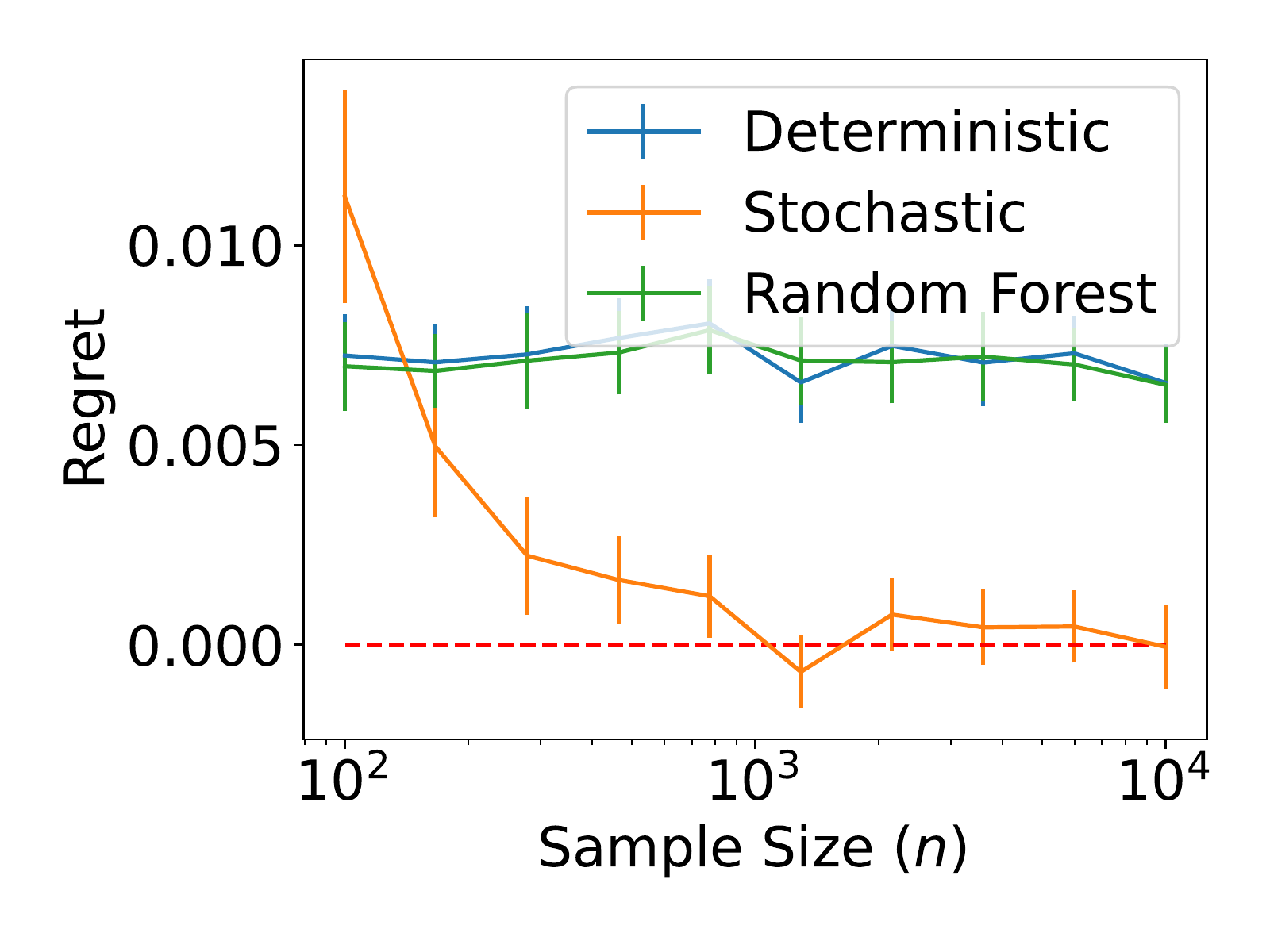}
        \caption{Experiment 1}
        \label{fig:experiment2}
    \end{subfigure}
    \hfill
    \begin{subfigure}[b]{0.65\textwidth}
        \centering
        \includegraphics[width=\linewidth,trim={35mm 0 42mm 0},clip]{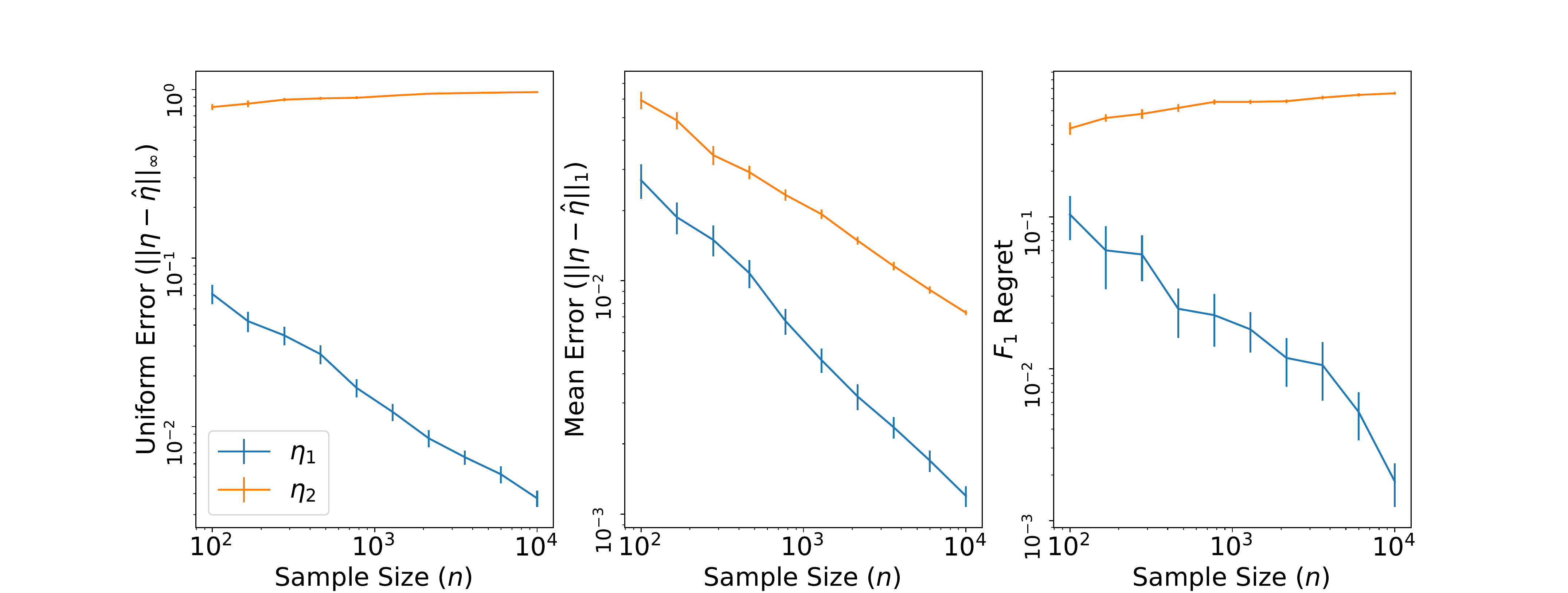}
        \caption{Experiment 2}
        \label{fig:experiment1}
    \end{subfigure}
    \caption{(a) Regret of deterministically and stochastically thresholding the $k$NN classifier, under the CMM $M(C) = \truepositive \cdot \truenegative$. (b) Uniform ($\L_\infty$) and average ($\L_1$) errors of $k$NN regressor, as well as $F_1$ regret of estimating regression functions $\eta_1$ and $\eta_2$. Error bars indicate $95\%$ confidence intervals.}
    \label{fig:experiments}
\end{figure}

For $10$ logarithmically spaced values of $n$ between $10^2$ and $10^4$, we drew $n$ independent samples of $(X, Y)$ according the joint distributions corresponding to each of $\eta_1$ and $\eta_2$. Since, in this example, $\alpha = d = 1$, to ensure $r \in o \left( n^{-d/(2\alpha + 2d)} \right)$, we set $r = n^{-1/2}$. As indicated by Corollary~\ref{corrollary:euclidean_AC_uniform_rate}, we set $k = \lfloor n^{2/3} r^{-1/3} \rfloor$. We then computed $\L_\infty$ and $\L_1$ distances between the $k$NN regressor (Eq.~\eqref{eqn:KNNRegressor}) and true regression function. We also drew $1000$ independent test samples of $(X, Y)$ and used these to estimate the $F_1$ score of thresholding the $k$NN regressor at a threshold $t$ determined by optimizing the empirical $F_1$ score (over the training data) over $100$ uniformly-spaced values of $t \in [0, 1]$.
Figure~\ref{fig:experiment1} shows the uniform ($\L_\infty$) error, the average ($\L_1$) error, and the $F_1$ regret,
% i.e., the sub-optimality of the $F_1$ score of the $k$NN classifier under optimal thresholding relative to that of the generalized Bayes classifier,
which we bounded in Corollary~\ref{corr:CMM_error_decomposition}.
Consistent with our analysis above, the uniform ($\L_\infty$) error decays to $0$ under $\eta_1$ but not under $\eta_2$. Meanwhile, the average error ($\L_1$) decays to $0$ under both $\eta_1$ and $\eta_2$. Consistent with Corollary~\ref{corr:CMM_error_decomposition}, the $F_1$ regret of the thresholded classifier, which decays to $0$ under $\eta_1$ but not under $\eta_2$, mirrors performance of the regressor in uniform ($\L_\infty$) error rather than average ($\L_1$) error.

\section{Conclusions}
Our main conclusions are as follows.
First, without any assumptions on the data-generating distribution, the Bayes-optimal classifier generalizes from accuracy to other performance metrics using a stochastic thresholding procedure, while, in general, deterministic classifiers may not achieve Bayes-optimality.
This generalized Bayes classifier provides an optimal performance benchmark relative to which one can analyze classifiers that threshold estimates of the regression function.
This includes the $k$NN classifier, for which we provided new guarantees, including minimax-optimally under uniform loss in the presence of Uniform Class Imbalance. Our results imply that the parameter $k$ needs to be tuned differently for different sub-types of imbalanced classification, suggesting that developing reliable classifiers for severely imbalanced classes may require a more nuanced understanding of the data at hand.
Further work is needed to (a) understand how sub-types of class imbalance can be distinguished in practice, (b) develop adaptive classifiers that perform well under multiple imbalance sub-types, and (c) extend our results to the multiclass case.

While this paper focused on statistical properties of stochastic classification, we should point out that using stochastic classifiers in real applications may require careful consideration of possible downstream consequences. On one hand, Theorem~\ref{thm:generalized_bayes} provides justification for using (a limited degree of) stochasticity to break certain ties between classes: sometimes, this is provably necessary to optimize performance according to certain metrics. Stochastic classifiers can also be easier to train~\citep{cotter2019two,lu2020stochastic} or more robust to adversarial examples~\citep{pinot2022robustness}. However, stochastic classifiers have risks, including being harder to interpret, explain, or debug, and being vulnerable to manipulation by downstream users (e.g., a user might query the classifier multiple times to produce a desired prediction). Stochastic classifiers may also violate certain notions of fairness, as individuals with identical features might be assigned to different classes. Techniques for derandomizing classifiers~\citep{cotter2019making,wu2022metric} may help address these issues.

\begin{ack}
    This work was supported by the German Federal Ministry of Education and Research (BMBF) through the T\"ubingen AI Center (FKZ: 01IS18039B). 
\end{ack}

\bibliographystyle{plainnat}
\bibliography{refs.bib}

%%%%%%%%%%%%%%%%%%%%%%%%%%%%%%%%%%%%%%%%%%%%%%%%%%%%%%%%%%%%
\section*{Checklist}

% %%% BEGIN INSTRUCTIONS %%%
% The checklist follows the references.  Please
% read the checklist guidelines carefully for information on how to answer these
% questions.  For each question, change the default \answerTODO{} to \answerYes{},
% \answerNo{}, or \answerNA{}.  You are strongly encouraged to include a {\bf
% justification to your answer}, either by referencing the appropriate section of
% your paper or providing a brief inline description.  For example:
% \begin{itemize}
%   \item Did you include the license to the code and datasets? \answerYes{See Section 97.}
%   \item Did you include the license to the code and datasets? \answerNo{The code and the data are proprietary.}
%   \item Did you include the license to the code and datasets? \answerNA{}
% \end{itemize}
% Please do not modify the questions and only use the provided macros for your
% answers.  Note that the Checklist section does not count towards the page
% limit.  In your paper, please delete this instructions block and only keep the
% Checklist section heading above along with the questions/answers below.
% %%% END INSTRUCTIONS %%%

\begin{enumerate}

\item For all authors...
\begin{enumerate}
  \item Do the main claims made in the abstract and introduction accurately reflect the paper's contributions and scope?
    \answerYes{}
  \item Did you describe the limitations of your work?
    \answerYes{}
  \item Did you discuss any potential negative societal impacts of your work?
    \answerNA{}
  \item Have you read the ethics review guidelines and ensured that your paper conforms to them?
    \answerYes{}
\end{enumerate}

\item If you are including theoretical results...
\begin{enumerate}
  \item Did you state the full set of assumptions of all theoretical results?
    \answerYes{}
        \item Did you include complete proofs of all theoretical results?
    \answerYes{See Appendices~\ref{app:generalized_bayes}, \ref{app:relative_performance_guarantees}, and \ref{app:UniformConvergenceProof}.}
\end{enumerate}

\item If you ran experiments...
\begin{enumerate}
  \item Did you include the code, data, and instructions needed to reproduce the main experimental results (either in the supplemental material or as a URL)?
    \answerYes{Code and instructions for reproducing the experimental results are included in the supplemental material.}
  \item Did you specify all the training details (e.g., data splits, hyperparameters, how they were chosen)?
    \answerYes{}
        \item Did you report error bars (e.g., with respect to the random seed after running experiments multiple times)?
    \answerYes{}
        \item Did you include the total amount of compute and the type of resources used (e.g., type of GPUs, internal cluster, or cloud provider)?
    \answerYes{See Appendix~\ref{app:further_experimental_details}.}
\end{enumerate}

\item If you are using existing assets (e.g., code, data, models) or curating/releasing new assets...
\begin{enumerate}
  \item If your work uses existing assets, did you cite the creators?
    \answerYes{}
  \item Did you mention the license of the assets?
    \answerYes{}
  \item Did you include any new assets either in the supplemental material or as a URL?
    \answerNA{}
  \item Did you discuss whether and how consent was obtained from people whose data you're using/curating?
    \answerNA{}
  \item Did you discuss whether the data you are using/curating contains personally identifiable information or offensive content?
    \answerNA{}
\end{enumerate}

\item If you used crowdsourcing or conducted research with human subjects...
\begin{enumerate}
  \item Did you include the full text of instructions given to participants and screenshots, if applicable?
    \answerNA{}
  \item Did you describe any potential participant risks, with links to Institutional Review Board (IRB) approvals, if applicable?
    \answerNA{}
  \item Did you include the estimated hourly wage paid to participants and the total amount spent on participant compensation?
    \answerNA{}
\end{enumerate}
\newpage

\end{enumerate}
\appendix
\section{Derivation of the Generalized Bayes Classifier}
\label{app:generalized_bayes}

In this Appendix, we prove Theorem~\ref{thm:generalized_bayes}, in which we characterize a stochastic generalization of the Bayes classifier to arbitrary CMMs. We reiterate the theorem for the reader here:

\begin{customthm}{\ref{thm:generalized_bayes}}
    If $\argmax_{\hat Y \in \SC} M(C_{\hat Y}) \neq \varnothing$, then there exists a regression-thresholding classifier
    \[\hat Y_{p,t,\eta} \in \argmax_{\hat Y \in \SC} M(C_{\hat Y}).\]
\label{thm:generalized_bayes_app}
\end{customthm}

As described in the main paper, unlike prior results \citep{koyejo2014consistent,yan2018binary,wang2019multiclass}, we do not assume that the distribution of $\eta(X)$ is absolutely continuous. This makes proving Theorem considerably more complicated than these previous results.
We prove Theorem~\ref{thm:generalized_bayes} in a sequence of steps, constructing optimal classifiers in forms progressively closer to that of the generalized Bayes classifier described in Theorem~\ref{thm:generalized_bayes}. Specifically, we first show, in Lemma~\ref{lemma:regression_based_classifier}, that there exists an optimal classifier that is a (stochastic) function of the \edit{true} regression function $\eta\edit{^*}$. We then construct an optimal classifier in which this function of $\eta\edit{^*}$ is non-decreasing. Finally, we construct an optimal classifier in which this function of $\eta\edit{^*}$ is a threshold function, as in Theorem~\ref{thm:generalized_bayes}.

\begin{lemma}
    For any stochastic classifier $\hat Y : \X \to \B$, there is a stochastic classifier $\hat Y' : \X \to \B$ of the form
    \begin{equation}
        \hat Y'(x) \sim \text{Bernoulli}(f(\eta\edit{^*}(x))),
        \label{eq:regression_based_classifier}
    \end{equation}
    for some $f : [0, 1] \to [0, 1]$, such that $C_{\hat Y'} = C_{\hat Y}$.
    \label{lemma:regression_based_classifier}
\end{lemma}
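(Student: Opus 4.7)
}
The plan is to reduce the confusion matrix to two sufficient statistics, then replace the classifier by a conditional expectation on the $\sigma$-algebra generated by $\eta^*$. First I would observe that $C_{\hat Y}$ depends on $\hat Y$ only through the scalar-valued function
\[q(x) := \E[\hat Y(x)] = \prob[\hat Y(x) = 1], \qquad x \in \X,\]
since $\truepositive_{\hat Y} = \E[\eta^*(X) q(X)]$ and, by analogous computation,
$\falsepositive_{\hat Y} = \E[(1-\eta^*(X)) q(X)]$, while the remaining entries are determined via the identities $\truepositive_{\hat Y} + \falsenegative_{\hat Y} = \E[\eta^*(X)]$ and $\falsepositive_{\hat Y} + \truenegative_{\hat Y} = 1 - \E[\eta^*(X)]$, which do not depend on $\hat Y$. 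Thus it suffices to find $f : [0,1] \to [0,1]$ such that the alternative score $q'(x) := f(\eta^*(x))$ yields the same two expectations $\E[\eta^*(X) q'(X)] = \E[\eta^*(X) q(X)]$ and $\E[(1-\eta^*(X)) q'(X)] = \E[(1-\eta^*(X)) q(X)]$.

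Next I would take $f$ to be (a version of) the regular conditional expectation
\[f(t) := \E\bigl[ q(X) \,\big|\, \eta^*(X) = t \bigr].\]
Because $(\X,\rho)$ is separable and $q$ is bounded and measurable into $[0,1]$, a standard existence result for regular conditional expectations on Polish state spaces guarantees that such an $f$ exists and is measurable; furthermore, because $q$ takes values in $[0,1]$, the conditional expectation does as well, so, after redefining $f$ on a set of pushforward measure zero, we may take $f : [0,1] \to [0,1]$ everywhere. The key computation is then a direct application of the tower property: since $\eta^*(X)$ is $\sigma(\eta^*(X))$-measurable, for any bounded Borel $g : [0,1] \to \R$,
\[\E\bigl[g(\eta^*(X))\,q(X)\bigr] \;=\; \E\Bigl[g(\eta^*(X))\,\E\bigl[q(X)\mid \eta^*(X)\bigr]\Bigr] \;=\; \E\bigl[g(\eta^*(X))\,f(\eta^*(X))\bigr].\]
Applying this to $g(t)=t$ and to $g(t)=1-t$ yields the required equalities for $\truepositive$ and $\falsepositive$.

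Finally, I would define the stochastic classifier $\hat Y'(x) \sim \mathrm{Bernoulli}(f(\eta^*(x)))$ (independent of the data conditional on $X$). By construction $\E[\hat Y'(x)] = f(\eta^*(x))$, so the induced $q'$ agrees with the $f \circ \eta^*$ above, and hence $C_{\hat Y'} = C_{\hat Y}$, proving the lemma.

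I expect the only genuinely delicate point to be the existence and $[0,1]$-valuedness of the regular conditional expectation $f$; this is where the separability hypothesis on $(\X,\rho)$ is used, via a standard Polish-space disintegration argument. Everything else reduces to the observation that the confusion matrix sees $\hat Y$ only through $q$, plus the tower property.
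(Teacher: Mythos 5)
Your proposal is correct and follows essentially the same route as the paper: both define $f$ as the conditional expectation of the classifier's acceptance probability given $\sigma(\eta^*(X))$ and verify equality of the confusion matrices via the tower property. Your preliminary reduction to the scalar score $q(x)=\E[\hat Y(x)]$ and the two sufficient statistics $\truepositive,\falsepositive$ is a cleaner packaging that sidesteps the paper's lengthier product-space/Fubini verification of the conditional-expectation identity, but the underlying idea is identical.
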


\begin{proof}
    \edit{We start by defining the regression function and discussing formal probability notation.
    Let \(\mathcal{F} = \sigma(\eta^{*}(Z))\) be the \(\sigma\)-field generated by the true regression function \(\eta^{*}(Z)\).}
    Define $\hat Y' : \X \to \B$ by
    \[\hat Y'(x) \sim \text{Bernoulli} \left( \E_{Z \sim P_X} \left[ \hat Y(Z) \middle| 
    \mathcal{F}
    %#\eta\edit{^*}(Z) = \eta\edit{^*}(x) 
    \right](\eta^{*}(x)) \right),
    \]
    \edit{where by the \(\eta^{*}(x)\) we are explicitly indicating that \(\eta^{*}(x)\) is the input to the conditional expectation, as the conditional expectation is a measurable function of \(\eta^{*}(Z)\).
    In the sequel, following standard conventions, we omit such notation.
    Note that $\hat Y'$ has the desired form and is defined almost surely.}
    % \SS{What is the ``almost surely'' here? What is the edge case in which $\hat Y'$ is not defined?}
    
    \edit{Now, we get our final notes in place before proceeding with calculations.
    Let \(\mathcal{G} = \sigma(X)\) be the \(\sigma\)-field generated by \(X\).
    }

    \edit{A key step in the proof is showing the equality}
    \begin{align}
        \E\left[Y \hat Y'(X) \middle| \mathcal{F}\right]
        &= \eta^{*}(X) \E \left[ \hat Y (X) \middle| \mathcal{F}\right] 
        = \E\left[Y \hat Y(X) \middle| \mathcal{F}\right].
        \label{eqn:conditionalExpectation} 
    \end{align}
    \edit{We first provide an intuitive summary. Observe that conditioned on \(\eta^{*}(X)\), \(Y\) is independent of \(\hat Y\) and \(\hat{Y}'\) separately. Thus, we can integrate over \(Y\) and get the \(\eta^{*}(X)\). By the definition of \(\hat Y'\) we obtain the conditional expectation with respect to \(Y\), and we have the resulting equality.}
 
    \edit{For the time being, taking Equation~\eqref{eqn:conditionalExpectation} as fact, we complete the rest of the proof. Combining Equation~\eqref{eqn:conditionalExpectation} and the tower property of conditional expectation, we have} 
    \begin{align*}
    \truepositive_{\hat Y'}
         & = \E \left[ Y \hat Y'(X) \right]
         =  \E \left[ \E \left[ Y \hat Y'(X) \middle| \mathcal{F}  \right] \right] 
         =  \E \left[ \E \left[ Y \hat Y(X) \middle| \mathcal{F}  \right] \right] 
         %\\
        %& = \E \left[ \eta\edit{^*}(X) \E \left[ \hat Y'(X) \middle| 
       % \edit{\mathcal{F}}
        %\eta\edit{^*}(X) 
       % \right] \right] \\
       % & = \E \left[ \eta\edit{^*}(X) \E \left[ \hat Y(X) \middle| 
       % \edit{\mathcal{F}}
        %\eta\edit{^*}(X) 
       % \right] \right] \\
          = \E \left[ Y \hat Y(X) \right] 
          = \truepositive_{\hat Y}.
    \end{align*}
    Similarly, one can check that $C_{\hat Y'} = C_{\hat Y}$.
    \end{proof}
    
    \begin{proof}[Proof of Equation~\eqref{eqn:conditionalExpectation}]
    
    \edit{We now present the formal details to establish Equation~\eqref{eqn:conditionalExpectation}, which may be skipped if one is uninterested in the measure-theoretic details. We first to set up the random variables needed to formalize the above intuition. Let \(S\), \(T\), \(U\), \(V\), and \(W\) be random variables. Let \((S, T) = (\eta^{*}(X), X)\). We also require \(U\) and \(V\) to be uniform on \([0, 1]\). We need not specify the precise distribution \(W\). Further,  let \((S, T)\), \(U\), \(V\), and \(W\) all be independent. For notation purposes, it is convenient for the underlying probability space \((\Omega, \mathcal{G}, \prob)\) to be the product probability space of some probability spaces to the extent possible. In particular, we require the set \(\Omega =  \bigtimes_{i = S}^{W} \Omega_{i}\) and a set \(B\) in \(\mathcal{G}\) to have a product structure \(B =  \bigtimes_{i = S}^{W} B_{i}\) for ease of exposition. Note that the measure $\prob$ is not a product measure, as \(S\) and \(T\) are dependent, although it is a product measure with respect to the laws of \((S, T)\), \(U\), \(V\), \(W\).}
    
    \edit{Now, we define \(Y\), \(\hat Y'\), and \(\hat Y\) in terms of \(U\), \(V\), \(W\), and \(X\). Define
    \begin{align*}
        Y(\eta^{*}(X)) &= \ind\{U \leq \eta^{*}(X)\} \\
        \hat Y' (\eta^{*}(X)) &= \ind\{V \leq \E[\hat Y | \sigmafield]\} \\
        \hat Y(X) &= r(S, T, W),
    \end{align*}
    where here \(U\) serves as the noise in \(Y\), \(V\) serves as the internal randomization of \(\hat Y'\), and \(W\) serves as the possible internal randomization of \(\hat Y\). Here, \(\ind\) is the indicator taking the value 1 if the event in curly braces occurs and 0 otherwise, and  \(r\) is some function. Note that this is still of the desired form.}
    
    \edit{To prove the first part of Equation~\eqref{eqn:conditionalExpectation}, i.e., to show that the middle term \(M = \eta^{*}(X) \E [ \hat Y (X)| \mathcal{F}] = S \E[\hat Y | \sigmafield]\) is indeed the conditional expectation of the left of Equation~\eqref{eqn:conditionalExpectation}, we must verify two conditions: (i) \(M\) is \(\sigmafield\)-measurable and (ii) the integrals of \(Y \hat Y'\) and \(M\) are identical on any event \(A\) in \(\sigmafield\) \citep[page 221]{durrett2010probability}. Condition (i) is immediate, since \(M\) is a function of \(\eta^{*}(X)\) and nothing more.}
    
    \edit{For condition (ii), we do a bit of computation. Let \(A = A_{S} \times A_{T} \times A_{U} \times A_{V} \times A_{W}\) be an event in \(\sigmafield\).
    % \SS{To make sure I'm understanding, the reason that we can assume $A$ is a product set is because equality on all product sets implies equality on the entire $\sigma$-algebra (as discussed \href{https://math.stackexchange.com/questions/812715/if-two-measures-agree-on-generating-sets-do-they-agree-on-all-measurable-sets}{here}), right?}
    Note that because \(A\) is \(\sigmafield\)-measurable, the sets \(A_{U}\) and \(A_{V}\) must be all of \(\Omega_{U}\) and \(\Omega_{V}\) or the empty set. Suppose for the moment that both are non-empty. Then, we have
    \begin{align*}
    \int_{A} Y \hat Y' d\prob 
    &= \int_{A_{S}} \int_{A_{V}} \int_{A_{U}} \ind\left\{U \leq S\right\} \ind\{V \leq \E[\hat Y | \sigmafield]\} d\prob_{U} d\prob_{V} d\prob_{S} \\
    &= \int_{A_{S}} \int_{A_{V}} S \ind\{V \leq \E[\hat Y | \sigmafield]\} d\prob_{V} d\prob_{S} \\
    &= \int_{A_{S}} S \E[\hat Y | \sigmafield] d\prob_{S} \\
    &= \int_{A} S \E[\hat Y | \sigmafield] d\prob.
    \end{align*}
    Note that the crucial first and last equalities in which we break and reform the integral over the entire event in terms of its coordinates is possible due to Fubini's theorem for integrable functions. This proves the desired equality when \(A_{U} = \Omega_{U}\) and \(A_{V} = \Omega_{V}\), and from the preceding calculation we can see that the integrals are both 0 when either \(A_{U} = \emptyset\) or \(A_{V} = \emptyset\), and so the desired equality holds. This establishes the left equality of Equation~\eqref{eqn:conditionalExpectation}.}
    
    \edit{Establishing the right hand side of Equation~\eqref{eqn:conditionalExpectation} also takes a bit of calculation. First, we have already verified the measurability condition (i), and so all that remains is to check the integral equality. Again, let \(A\) be an event in \(\sigmafield\), and assume that \(A_{T}\), \(A_{U}\), and \(A_{W}\) are non-empty. We have
    \begin{align*}
    \int_{A} Y \hat Y d\prob
    &= \int_{A_{S}} \int_{A_{T}} \int_{A_{W}}  \int_{A_{U}} \ind\left\{U \leq S\right\} r(S, T, W) d\prob_{U} d\prob_{W} d\prob_{T} d\prob_{S} \\
    &= \int_{A_{S}} \int_{A_{T}} \int_{A_{W}} S \; r(S, T, W) d\prob_{W} d\prob_{T} d\prob_{S} \\
    &= \int_{A_{S}}  S\left(\int_{A_{T}} \int_{A_{W}} r(S, T, W) d\prob_{W} d\prob_{T}\right) d\prob_{S}.
    \end{align*}
    Note that Fubini's theorem is again used in the first step. In the event that any of \(A_{T}\), \(A_{U}\), or \(A_{W}\) is empty, then the above equation is 0 and equality holds. Now, we have to show that the random variable \(M' = \int_{A_{T}} \int_{A_{W}} r(S, T, W) d\prob_{W} d\prob_{T|S}\) is the conditional expectation of \(\hat Y\) with respect to \(\sigmafield\). Measurability is readily apparent, as \(M'\) is a function of \(S\) and no more. Now, we check the condition that \(\hat Y\) and \(M'\) have the same integral on an event \(A\) in \(\sigmafield\). Again assume that \(A_{U}\) and \(A_{W}\) are non-empty, observing in the calculation to follow that equality holds with the value 0 if either is empty. Using Fubini's theorem and some direct computation, we have
    \begin{align*}
    \int_{A} \hat Y d\prob
    &= \int_{A} r(S, T, W) d\prob \\
    &= \int_{A_{S}} \int_{A_{T}} \int_{A_{W}}   r(S, T, W)  d\prob_{W} d\prob_{T} d\prob_{S} \\
    &= \int_{A} \left(\int_{A_{T}} \int_{A_{W}} r(S, T, W) d\prob_{W} d\prob_{T}\right) d\prob \\
    &= \int_{A} M' d\prob.
    \end{align*}
    This completes the proof that the conditional expectation of \(\hat Y\) is \(M'\), and so it proves that the conditional expectation of \(Y \hat Y\) is \(\eta^{*}(X) \E[\hat Y | \sigmafield].\) This is the right equality of Equation~\eqref{eqn:conditionalExpectation}, thus completing the proof.}
\end{proof}

   % Similarly, we have
   % \[\E \left[ Y \hat Y(X) \right]
   %     = \E \left[ \eta\edit{^*}(X) \E \left[ \hat Y(X) \middle| 
   %     \edit{\mathcal{F}}
        %\eta\edit{^*}(X) 
  %      \right] \right].\]
  %  Additionally, by construction of $\hat Y'$,
  %  \[\E \left[ \hat Y'(X) \middle| 
  %  \edit{\mathcal{F}}
    %\eta\edit{^*}(X) 
  % \right]
   %   = \E \left[ \text{Bernoulli} \left( \E_{Z \sim P_X}  \left[ \hat Y(Z) \middle| 
  %    \edit{\mathcal{F}}
      %\eta\edit{^*}(Z) = \eta\edit{^*}(X) 
   %   \right]
   %   \right) 
   %   \middle| 
  %    \edit{\mathcal{F}}
      %\eta\edit{^*}(X) 
  %    \right]
  %    = \E \left[ \hat Y(X) \middle| 
  %    \edit{\mathcal{F}}
      %\eta\edit{^*}(X) 
  %    \right].\]

It follows from Lemma~\ref{lemma:regression_based_classifier} that, if $M(C_{\hat Y})$ is maximized by any \edit{stochastic} classifier, then it is maximized by a classifier $\hat Y$ of the form in Eq.~\eqref{eq:regression_based_classifier}. It remains to show that $f$ in Eq.~\eqref{eq:regression_based_classifier} can be of the form $z \mapsto p 1\{z = t\} + 1\{z > t\}$ for some threshold $(p, t) \in [0, 1]^2$. Before proving this, we give a simplifying lemma showing that the problem of maximizing a CMM can be equivalently framed as a particular functional optimization problem. This will allow us to to significantly simplify the notation in the subsequent proofs.

\begin{lemma}
    Let $M$ be a CMM, and suppose that $M(C_{\hat Y})$ is maximized (over $\SC$) by a classifier $\hat Y$ of the form
    \[\hat Y(x) \sim \operatorname{Bernoulli}(f(\eta(x))),\]
    for some $f^* : [0, 1] \to [0, 1]$. Let $f$ be a solution to the optimization problem
    \begin{equation}
    \max_{f : [0, 1] \to [0, 1]} \E[\eta(X) f(\eta(X))]
      \; \text{ s.t. } \;
      \E[(1 - \eta(X)) f(\eta(X))] \leq \E[(1 - \eta(X)) f^*(\eta(X))].
        \label{specific_opt}
    \end{equation}
    Then, the classifier
    \[\hat Y'(x) \sim \operatorname{Bernoulli}(f(\eta(x))),\]
    also maximizes $M(C_{\hat Y'})$ (over $\SC$).
    \label{lemma:CMM_neyman_pearson_equivalence}
\end{lemma}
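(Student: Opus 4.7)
The plan is to observe that when a classifier has the special form $\hat Y(x) \sim \operatorname{Bernoulli}(f(\eta(x)))$, all four entries of the confusion matrix are completely determined by only two quantities, $\truepositive$ and $\falsepositive$, with $\truepositive + \falsenegative$ and $\falsepositive + \truenegative$ being fixed constants independent of $f$. The optimization problem in \eqref{specific_opt} is then precisely the statement ``maximize $\truepositive$ while not letting $\falsepositive$ increase beyond its current value,'' and the CMM monotonicity property does the rest.

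Concretely, the first step is to compute that for $\hat Y'(x) \sim \operatorname{Bernoulli}(f(\eta(x)))$, by conditioning on $\eta(X)$ (using the same argument as in the proof of Lemma~\ref{lemma:regression_based_classifier}) one has
\[\truepositive_{\hat Y'} = \E[\eta(X) f(\eta(X))], \qquad \falsepositive_{\hat Y'} = \E[(1-\eta(X)) f(\eta(X))],\]
and consequently $\falsenegative_{\hat Y'} = \E[\eta(X)] - \truepositive_{\hat Y'}$ and $\truenegative_{\hat Y'} = \E[1-\eta(X)] - \falsepositive_{\hat Y'}$, with identical expressions for $\hat Y$ in terms of $f^*$.

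The second step is to exploit optimality of $f$ and feasibility of $f^*$ in \eqref{specific_opt}. Since $f^*$ trivially satisfies its own constraint with equality, $f$ is feasible, so $\falsepositive_{\hat Y'} \leq \falsepositive_{\hat Y}$, and by optimality $\truepositive_{\hat Y'} \geq \truepositive_{\hat Y}$. Setting $\epsilon_1 := \falsepositive_{\hat Y} - \falsepositive_{\hat Y'} \in [0, \falsepositive_{\hat Y}]$ and $\epsilon_2 := \truepositive_{\hat Y'} - \truepositive_{\hat Y}$, the identities above yield $\truenegative_{\hat Y'} = \truenegative_{\hat Y} + \epsilon_1$ and $\falsenegative_{\hat Y'} = \falsenegative_{\hat Y} - \epsilon_2$, so $\epsilon_2 \in [0, \falsenegative_{\hat Y}]$. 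The CMM property (Definition~\ref{def:CMM}) applied to $C_{\hat Y}$ with these choices of $\epsilon_1, \epsilon_2$ then gives $M(C_{\hat Y}) \leq M(C_{\hat Y'})$. Since $\hat Y$ was assumed to maximize $M(C_{\cdot})$ over $\SC$, equality must hold and $\hat Y' \in \argmax_{\hat Y \in \SC} M(C_{\hat Y})$, as claimed.

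I do not anticipate a serious obstacle: the only subtle point is verifying that the inequalities produced by the optimization problem align exactly with the directions of monotonicity allowed by Definition~\ref{def:CMM}, which is purely a bookkeeping check on the fixed sums $\truepositive + \falsenegative = \E[\eta(X)]$ and $\truenegative + \falsepositive = \E[1 - \eta(X)]$. The role of this lemma is essentially organizational — it reduces optimizing an arbitrary CMM to a single Neyman–Pearson-type scalar problem, which sets up the subsequent construction of a threshold $(t,p)$ in the remainder of the proof of Theorem~\ref{thm:generalized_bayes}.
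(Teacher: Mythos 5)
Your proposal is correct and follows essentially the same argument as the paper's proof: both establish $\truepositive_{\hat Y'} \geq \truepositive_{\hat Y}$ and $\falsepositive_{\hat Y'} \leq \falsepositive_{\hat Y}$ from optimality and feasibility of $f$ in \eqref{specific_opt}, use the fixed row sums $\truepositive + \falsenegative$ and $\falsepositive + \truenegative$ to write $C_{\hat Y'}$ as the perturbation of $C_{\hat Y}$ by $(\epsilon_1, \epsilon_2)$, and invoke Definition~\ref{def:CMM}. Your explicit remark that $f^*$ is feasible for its own constraint (so the optimum satisfies $\E[\eta f] \geq \E[\eta f^*]$) is a small clarification the paper leaves implicit, but the route is the same.
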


\begin{proof}
    This result follows from the definition (Definition~\ref{def:CMM}) of a CMM. Specifically, by construction of $\hat Y'$,
    \[\truepositive_{\hat Y'} = \E[\eta(X) f(\eta(X))] \geq \E[\eta(X) f^*(\eta(X))] = \truepositive_{\hat Y}\]
    and
    \[\falsepositive_{\hat Y'} = \E[(1 - \eta(X)) f(\eta(X))] \leq \E[(1 - \eta(X)) f^*(\eta(X))] = \falsepositive_{\hat Y}.\]
    Moreover, \edit{since the proportions of positive and negative true labels are independent of the chosen classifier} (i.e., $\truepositive_{\hat Y'} + \falsenegative_{\hat Y'} = \truepositive_{\hat Y} + \falsenegative_{\hat Y}$ and $\falsepositive_{\hat Y'} + \truenegative_{\hat Y'} = \falsepositive_{\hat Y} + \truenegative_{\hat Y}$), we have
    \[C_{\hat Y'} =
    \begin{bmatrix}
        \truenegative_{\hat Y} + \epsilon_1 & \falsepositive_{\hat Y} - \epsilon_1 \\
        \falsenegative_{\hat Y} - \epsilon_2 & \truepositive_{\hat Y} + \epsilon_2,
    \end{bmatrix},\]
    where $\epsilon_1 := \falsepositive_{\hat Y} - \falsepositive_{\hat Y'} \in [0, \falsepositive_{\hat Y}]$ and $\epsilon_2 := \truepositive_{\hat Y'} - \truepositive_{\hat Y} \in [0, \falsenegative]$.
    Thus, by the definition (Definition~\ref{def:CMM}) of a CMM, $M(C_{\hat Y'}) \geq M(C_{\hat Y})$.
\end{proof}

Lemma~\ref{lemma:CMM_neyman_pearson_equivalence} essentially shows that maximizing any CMM $M$ is equivalent to performing Neyman-Pearson classification, at some particular false positive level $\alpha$ depending on $M$ (through $f^*$) and on the distribution of $\eta(X)$. For our purposes, this simplifies the remaining steps in proving Theorem~\ref{thm:generalized_bayes} by allowing us to ignore the details of the particular CMM $M$ and regression function $\eta$ and focus on characterizing solutions to an optimization problem of the form~\eqref{specific_opt} (see, specifically, \eqref{opt} below).

To characterize solutions to this optimization problem, we will utilize the following two measure-theoretic technical lemmas:

\begin{lemma}
    Let $\mu$ be a measure on $[0, 1]$ with $\mu([0, 1]) > 0$. Then, there exists $z \in \R$ such that, for all $\epsilon > 0$, $\mu([0, 1] \cap (z - \epsilon, z)) > 0$.
    \label{lemma:positive_measure_interval}
\end{lemma}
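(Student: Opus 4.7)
The natural candidate for $z$ is a point at the ``right edge'' of the mass of $\mu$, obtained by a supremum argument. Concretely, I would take $z := \sup \mathrm{supp}(\mu)$, where $\mathrm{supp}(\mu) \subseteq [0,1]$ is the topological support of $\mu$. Since $\mu([0,1]) > 0$, the support is a nonempty closed subset of $[0,1]$, so this supremum is well-defined and lies in $[0,1]$. An essentially equivalent formulation is to set $z := \sup\{t \in \mathbb{R} : F(t) < \mu([0,1])\}$, where $F(t) = \mu([0,1] \cap (-\infty, t))$ is the left-continuous distribution function of $\mu$; this reframes the proof as one about the monotone real-valued function $F$.

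Next, for arbitrary $\epsilon > 0$, I would argue via the defining property of the supremum that $\mathrm{supp}(\mu) \cap (z - \epsilon, z]$ must be nonempty, since otherwise $z - \epsilon$ would be a strict upper bound on $\mathrm{supp}(\mu)$, contradicting the choice of $z$. Picking any $s$ in this intersection with $s < z$, the definition of topological support yields $\mu(U) > 0$ for every open neighborhood $U$ of $s$. Choosing $U$ of radius less than $\min\{s - (z - \epsilon),\, z - s\}$ forces $U \subseteq (z - \epsilon, z) \cap [0,1]$, and monotonicity of $\mu$ then gives $\mu((z - \epsilon, z) \cap [0,1]) \geq \mu(U) > 0$, as desired.

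The main obstacle is the boundary case in which, for arbitrarily small $\epsilon$, the only element of $\mathrm{supp}(\mu) \cap (z - \epsilon, z]$ is $z$ itself, i.e., $z$ is an isolated point of $\mathrm{supp}(\mu)$. Such an isolated support point is necessarily an atom of $\mu$, and the strict openness of $(z - \epsilon, z)$ fails to capture this atomic mass. I would handle this edge case separately, either by invoking additional structure of $\mu$ from the surrounding context of Theorem~\ref{thm:generalized_bayes} (where $\mu$ typically arises as the distribution of $\eta^*(X)$ against which we threshold, so a single-atom case can be excluded or treated trivially), or by slightly shifting $z$ upward past the isolated atom. Once this case is dispatched, the core supremum-argument carries through cleanly, and the lemma follows.
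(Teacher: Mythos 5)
Your argument takes a genuinely different route from the paper's: the paper proves the contrapositive, assuming every $z$ admits an $\epsilon_z$ with $\mu([0,1]\cap(z-\epsilon_z,z))=0$ and then extracting a finite subcover of the compact set $[0,1]$ to force $\mu([0,1])=0$, whereas you directly propose the witness $z=\sup\mathrm{supp}(\mu)$. However, the ``boundary case'' you flag at the end is not a removable edge case but a genuine counterexample to the lemma as stated: take $\mu=\delta_{1/2}$. Then $\mu([0,1])=1>0$, yet for every $z\in\R$ there is an $\epsilon>0$ with $\mu([0,1]\cap(z-\epsilon,z))=0$ (any $\epsilon$ if $z\le 1/2$, and $\epsilon=z-1/2$ if $z>1/2$), since the open interval $(z-\epsilon,z)$ can always be arranged to miss the atom. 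Neither of your proposed repairs closes this gap. Shifting $z$ upward past the isolated atom does not help, because after the shift $(z-\epsilon,z)$ contains no mass at all for small $\epsilon$; there simply is no valid $z$. And you cannot import non-atomicity from the surrounding context: the lemma is invoked in the proof of Lemma~\ref{lemma:opt_problem} on the restricted measures $E\mapsto P_Z(A_\delta\cap E)$ and $E\mapsto P_Z(B_\delta\cap E)$, and the entire point of Theorem~\ref{thm:generalized_bayes} is to allow $\eta^*(X)$ to have atoms. A correct statement needs an explicit hypothesis ruling out the bad case (e.g., that $\mu$ is atomless, or that $\mu$ charges some point that is accumulated from the left by positive-measure sets), together with a verification of that hypothesis at the call site.

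Two further remarks. First, even when the conclusion does hold, $\sup\mathrm{supp}(\mu)$ need not be a witness: for $\mu$ equal to Lebesgue measure on $[0,1/2]$ plus a point mass at $3/4$, the supremum of the support is the isolated atom $3/4$, but $z=1/2$ works. The right candidate is a point where the distribution function $t\mapsto\mu([0,t])$ fails to be locally constant from the left; for atomless $\mu$ one can take $z=\inf\{s:\mu([0,s])=\mu([0,t])\}$ for any $t$ with $\mu([0,t])>0$ and check that every left neighborhood of $z$ has positive measure. Second, to your credit, you have located exactly where the paper's own proof breaks: for $\mu=\delta_{1/2}$, no set of the family $\mathcal{S}=\{[0,1]\cap(z-\epsilon_z,z)\}$ contains the point $1/2$ (membership would force $\mu((z-\epsilon_z,z))\ge 1$, contradicting the choice of $\epsilon_z$), so $\mathcal{S}$ is not in fact an open cover of $[0,1]$ and the compactness step fails. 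The fix has to happen in the lemma's hypotheses or at the point of use, not inside either proof.
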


\begin{proof}
    We prove the contrapositive. Suppose that, for every $z \in [0, 1]$, there exists $\epsilon_z > 0$ such that $\mu([0, 1] \cap (z - \epsilon_z, z)) = 0$. The family $\mathcal{S} := \{[0, 1] \cap (z - \epsilon_z, z) : z \in \R \}$ is an open cover of $[0, 1]$. Since $[0, 1]$ is compact, there exists a finite sub-cover $\mathcal{S}' \subseteq \mathcal{S}$ of $[0, 1]$. Thus, by countable subaddivity of measures,
    \[\mu([0, 1]) \leq \sum_{S \in \mathcal{S}'} \mu(\mathcal{S}) = 0.\]
\end{proof}

\edit{\begin{lemma}
    Let $(\X, \Sigma, \mu)$ be a measure space, let $E, F \in \Sigma$ be measurable sets, and let $f : \X \to \R$ be a $\Sigma$-measurable function. If
    \[\essup_\mu f(E) > \esinf_\mu f(F),\]
    then there exist measurable sets $A \subseteq E$ and $B \subseteq F$ with $\mu(A), \mu(B) > 0$, and
    \[\inf_{x \in A} f(x) > \sup_{x \in B} f(x).\]
    \label{lemma:inf_sup}
\end{lemma}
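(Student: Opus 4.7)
The plan is to prove the lemma by directly constructing the sets $A$ and $B$ via sublevel and superlevel sets of $f$, separated by two intermediate thresholds between $\esinf_\mu f(F)$ and $\essup_\mu f(E)$.

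Concretely, set $s := \essup_\mu f(E)$ and $t := \esinf_\mu f(F)$. By hypothesis $s > t$, so I can choose real numbers $\alpha, \beta$ with $t < \beta < \alpha < s$. I will then define
\[
A := \{x \in E : f(x) > \alpha\}
\quad \text{and} \quad
B := \{x \in F : f(x) < \beta\},
\]
both of which are measurable as preimages of open half-lines under $f$, intersected with $E$ or $F$. By construction $A \subseteq E$ and $B \subseteq F$, and on these sets
\[
\inf_{x \in A} f(x) \geq \alpha > \beta \geq \sup_{x \in B} f(x),
\]
giving the desired inequality.

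The one substantive step is verifying $\mu(A), \mu(B) > 0$, which follows immediately from the definitions of essential supremum and infimum: since $\alpha < s$, $\alpha$ fails to be an essential upper bound of $f$ on $E$, so $\mu(A) = \mu(\{x \in E : f(x) > \alpha\}) > 0$; symmetrically, since $\beta > t$, $\mu(B) > 0$. There is no genuine obstacle here — the argument is just unpacking the definitions of $\essup$ and $\esinf$ — so this short construction completes the proof.
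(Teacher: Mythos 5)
Your proof is correct and follows essentially the same route as the paper's: both pick two intermediate thresholds strictly between $\esinf_\mu f(F)$ and $\essup_\mu f(E)$, define $A$ and $B$ as the corresponding super/sublevel sets intersected with $E$ and $F$, and invoke the definitions of essential supremum and infimum to get positive measure. The only (immaterial) difference is that the paper uses non-strict level sets ($f \geq a$, $f \leq b$) where you use strict ones.
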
}

\edit{\begin{proof}
    If $\essup_\mu f(E) > \esinf_\mu f(F)$,
    then there exist $a > b$ such that
    $\essup_\mu f(E) > a > b > \esinf_\mu f(F)$.
    Since $\essup_\mu f(E) > a$,
    it follows that
    $P_Z(E \cap \{z : f(z) \geq a\}) > 0$.
    Similarly, since $\esinf_\mu f(F) < b$, it follows that
    $P_Z(F \cap \{z : f(z) \leq b\}) > 0$.
    Hence, letting $A = E \cap \{z : f(z) \geq a\}$ and $B = F \cap \{z : f(z) \leq b\}$, we have
    \[\inf_{z \in A} f(z) \geq a > b \geq \sup_{z \in B} f(z).\]
\end{proof}}

We are now ready for the main remaining step in the proof of Theorem~\ref{thm:generalized_bayes}, namely characterizing solutions of (a generalization of) the optimization problem~\eqref{specific_opt}:

\begin{lemma}
    Let $Z$ be a $[0, 1]$-valued random variable, and let $c \in [0, 1]$. Suppose that the optimization problem
    \begin{equation}
        \max_{f : [0, 1] \to [0, 1] \text{ measurable}} \E[Z f(Z)]
        \quad \text{ subject to } \quad \E[(1 - Z) f(Z)] \leq c
        \label{opt}
    \end{equation}
    has a solution. Then, there is a solution to \eqref{opt} that is a stochastic threshold function.
    \label{lemma:opt_problem}
\end{lemma}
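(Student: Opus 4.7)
The plan is to take any optimal solution $f^*$ and construct a stochastic threshold function $\tilde f$ that is feasible and achieves at least the same objective value, so that $\tilde f$ is also optimal. Let $c^* := \E[(1 - Z) f^*(Z)] \leq c$ denote the false-positive level attained by $f^*$. I will choose a threshold pair $(t^*, p) \in [0, 1]^2$ so that
\[\tilde f(z) := p \cdot 1\{z = t^*\} + 1\{z > t^*\}\]
satisfies $\E[(1 - Z) \tilde f(Z)] = c^*$ exactly. To do this, define $\phi(t) := \E[(1 - Z) 1\{Z > t\}]$, which is non-increasing and right-continuous, with left-limit $\lim_{s \uparrow t} \phi(s) = \phi(t) + (1 - t) \prob[Z = t]$. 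Set $t^* := \inf\{t \in [0, 1] : \phi(t) \leq c^*\}$, taking $t^* := 1$ when that set is empty. Right-continuity of $\phi$ yields $\phi(t^*) \leq c^*$, while the definition of the infimum gives $c^* \leq \phi(t^*) + (1 - t^*) \prob[Z = t^*]$, so I may pick $p \in [0, 1]$ with $\E[(1 - Z) \tilde f(Z)] = c^*$, ensuring feasibility.

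The next step is a pointwise rearrangement inequality comparing objective values. Set $g := \tilde f - f^*$. By construction, $g(z) \leq 0$ on $\{z < t^*\}$ (where $\tilde f \equiv 0$) and $g(z) \geq 0$ on $\{z > t^*\}$ (where $\tilde f \equiv 1$). Hence the pointwise inequality
\[(1 - z) g(z) \leq (1 - t^*) g(z)\]
holds everywhere: for $z < t^*$, the factor $1 - z > 1 - t^*$ multiplies a nonpositive number; for $z > t^*$, $0 \leq 1 - z < 1 - t^*$ multiplies a nonnegative number; and equality holds at $z = t^*$. Integrating against the law of $Z$ yields $0 = \E[(1 - Z) g(Z)] \leq (1 - t^*) \E[g(Z)]$, so $\E[g(Z)] \geq 0$ whenever $t^* < 1$. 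Since $\E[Z g(Z)] = \E[g(Z)] - \E[(1 - Z) g(Z)] = \E[g(Z)]$, this gives $\E[Z \tilde f(Z)] \geq \E[Z f^*(Z)]$, proving optimality of $\tilde f$.

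Finally, the boundary case $t^* = 1$ must be handled separately: it forces $c^* = 0$, hence $f^*(z) = 0$ for $P_Z$-almost every $z < 1$, so $\E[Z f^*(Z)]$ depends only on the behavior at $z = 1$, and the degenerate stochastic threshold $\tilde f(z) := f^*(1) \cdot 1\{z = 1\}$ matches both the objective and the constraint. The main obstacle I expect is the careful simultaneous selection of $t^*$ and $p$ when the distribution of $Z$ has an atom at $t^*$, so that the feasibility constraint is matched exactly; once that matching is in place, the pointwise inequality above converts it into the desired inequality on the objective without further work.
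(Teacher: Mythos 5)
Your proof is correct, but it takes a genuinely different and considerably more direct route than the paper's. The paper proves Lemma~\ref{lemma:opt_problem} in two variational stages: it first monotonizes an arbitrary solution $f$ (via the essential-supremum function $g(z)=\essup_{P_Z} f([0,z])$, Lemmas~\ref{lemma:positive_measure_interval} and \ref{lemma:inf_sup}, and a perturbation-by-contradiction argument), and then collapses the monotone solution to a threshold by a second perturbation argument. You instead give the classical randomized Neyman--Pearson construction: fix the attained constraint level $c^*=\E[(1-Z)f^*(Z)]$, choose $(t^*,p)$ so that the stochastic threshold matches $c^*$ exactly (using right-continuity of $\phi(t)=\E[(1-Z)\ind\{Z>t\}]$ and the left-limit jump $(1-t^*)\prob[Z=t^*]$ to justify the choice of $p$ at an atom), and then verify optimality from the single pointwise inequality $(t^*-z)\bigl(\tilde f(z)-f^*(z)\bigr)\le 0$ integrated against $P_Z$, together with the identity $\E[Zg(Z)]=\E[g(Z)]-\E[(1-Z)g(Z)]$. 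All the steps check out, including the two delicate points: the sandwich $\phi(t^*)\le c^*\le\phi(t^*)+(1-t^*)\prob[Z=t^*]$ that makes $p$ well-defined in $[0,1]$, and the boundary case $t^*=1$, which indeed forces $c^*=0$ and hence $f^*=0$ a.e.\ on $\{Z<1\}$. Your approach is shorter, avoids the measure-theoretic machinery of essential suprema entirely, and yields an explicit formula for the optimal threshold; it is, in effect, a self-contained proof of the randomized Neyman--Pearson lemma for the pair of ``densities'' $z$ and $1-z$, which is exactly the reduction the paper mentions informally after Lemma~\ref{lemma:CMM_neyman_pearson_equivalence} but does not exploit in its own proof. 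What the paper's longer route buys is mainly the intermediate structural fact that any optimizer agrees $P_Z$-a.e.\ with a monotone function, which is of some independent interest but is not needed for the stated conclusion.
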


\begin{proof}
    Suppose that there exists a solution $f$ to \eqref{opt}. We will construct a stochastic threshold function that solves \eqref{opt} in two main steps. First, we will construct a monotone solution to \eqref{opt}. Second, we will show that this monotone solution is equal to a stochastic threshold function except perhaps on a set of probability $0$ with respect to $Z$. This stochastic threshold function is therefore a solution to \eqref{opt}.
    
    \textbf{Construction of Monotone Solution to \eqref{opt}:}
    Define
    \[g(z) := \essup_{P_Z} f([0, z])
      \quad \text{ and } \quad
      h(z) := \esinf_{P_Z} f((z, 1]),\]
    where the essential supremum and infimum are taken with respect to the measure $P_Z$ of $Z$, with the conventions $g(z) = 0$ whenever $P_Z([0, z]) = 0$ and $h(z) = 1$ whenever $P_Z((z, 1]) = 0$. We first show that, for all $z \in [0, 1]$, $g(z) \leq h(z)$. We will then use this to show that $g = f$ except on a set of $P_Z$ measure $0$ (i.e., $P_Z(\{z \in [0, 1] : g(z) \neq f(z)\}) = 0$). Therefore, both $\E[Z g(Z)] = \E[Z f(Z)]$ and $\E[(1 - Z) g(Z)] = \E[(1 - Z) f(Z)]$. Since $g : [0, 1] \to [0, 1]$ is clearly monotone non-decreasing, the result follows.
    
    Suppose, for sake of contradiction, that, for some $z \in [0, 1]$, $g(z) > h(z)$. \edit{By Lemma~\ref{lemma:inf_sup},} there exist $A \subseteq [0, z]$ and $B \subseteq (z, 1]$ such that $\inf_{z \in A} f(z) > \sup_{z \in B} f(z)$ and $P_Z(A), P_Z(B) > 0$.
    % \footnote{\edit{As a concrete example, for any $a > b$ satisfying $g(z) > a > b > h(z)$, let
    % \begin{align*}
    %     A := \left\{ z' \in [0, z] : f(z') \geq a \right\}
    %     \quad \text{ and } \quad
    %     B := \left\{ z' \in (z, 1] :  f(z') \leq b \right\}.
    % \end{align*}
    % By the assumption $g(z) > h(z)$, $\essup_{P_Z} f([0, z]) = g(z) > a$, and so, by definition of $\essup$, $A$ has non-zero probability (i.e., $P_Z(A) > 0$). Similarly, since $b > h(z)$, $B$ has non-zero probability (i.e., $P_Z(B) > 0$).
    % By construction of $A$ and $B$,
    % \[\inf_{z \in A} f(z)
    %   \geq a
    %   > b
    %   \geq \sup_{z \in B} f(z).\]
    % }}
    Define $z_A := \E[Z|Z \in A]$ and $z_B := \E[Z|Z \in B]$, and note that, since $A \subseteq [0, z]$ and $B \subseteq (z, 1]$, $z_A < z_B$. Define,
    \begin{align*}
        \epsilon := \min \left\{ \frac{P_Z(A) (1 - z_A)}{P_Z(B) (1 - z_B)} \inf_{z \in A} f(z),\quad 1 - \sup_{z \in B} f(z) \right\} > 0
    \end{align*}
    and define $\phi : [0, 1] \to [0, 1]$ by
    \[\phi(z)
      := \left\{
          \begin{array}{cc}
               f(z) - \epsilon \frac{P_Z(B) (1 - z_B)}{P_Z(A) (1 - z_A)} & \text{ if } z \in A \\
               f(z) + \epsilon & \text{ if } z \in B \\
               f(z) & \text{ otherwise,}
          \end{array}
      \right.\]
    noting that, by construction of $\epsilon$, $\phi(z) \in [0, 1]$ for all $z \in [0, 1]$.
    Then, by construction of $\phi$,
    \begin{align*}
        \E[(1 - Z) \phi(Z)] - \E[(1 - Z) f(Z)]
        & = - (1 - z_A) \epsilon \frac{P_Z(B) (1 - z_B)}{P_Z(A) (1 - z_A)} P_Z(A) + (1 - z_B) \epsilon P_Z(B) \\
        & = 0 \cdot \epsilon = 0,
    \end{align*}
    while
    \begin{align*}
        \E[Z \phi(Z)] - \E[Z f(Z)]
        & = - z_A \epsilon \frac{P_Z(B) (1 - z_B)}{P_Z(A) (1 - z_A)} P_Z(A) + z_B \epsilon P_Z(B) \\
        & = \left( - \frac{z_A}{1 - z_A}(1 - z_B) + z_B \right) \epsilon P_Z(B)
          > 0,
    \end{align*}
    since the function $z \mapsto \frac{z}{1 - z}$ is strictly increasing. This contradicts the assumption that $f$ optimizes~\eqref{opt}, implying $g \leq h$.
    
    We now show that $g = f$ except on a set of $P_Z$ measure $0$. First, note that, if $g(z) \neq f(z)$, then $g(z) = \essup f([0, z]) = \essup f([0, z))$, and so $g$ is left-continuous at $z$.
    
    For any $\delta > 0$, define
    \[A_\delta := \left\{ z \in [0, 1] : g(z) < f(z) - \delta \right\}
        \quad \text{ and } \quad
        B_\delta := \left\{ z \in [0, 1] : g(z) > f(z) + \delta \right\}.\]
    Since
    \[\{z \in [0, 1] : g(z) < f(z)\} = \bigcup_{j = 1}^\infty \left\{ z \in [0, 1] : g(z) < f(z) - \frac{1}{j} \right\}\]
    and 
    \[\{z \in [0, 1] : g(z) > f(z)\} = \bigcup_{j = 1}^\infty \left\{ z \in [0, 1] : g(z) > f(z) + \frac{1}{j} \right\},\]
    by countable subadditivity, it suffices to show that $P_Z(A_\delta) = P_Z(B_\delta) = 0$ for all $\delta > 0$.
    
    Suppose, for sake of contradiction, that $P_Z(A_\delta) > 0$. Applying Lemma~\ref{lemma:positive_measure_interval} to the measure $E \mapsto P_Z(A_\delta \cap E)$, there exists $z \in \R$ such that, for any $\epsilon > 0$, $P_Z(A_\delta \cap (z - \epsilon, z)) > 0$. Since $g$ is continuous at $z$, there exists $\epsilon > 0$ such that $g(z - \epsilon) \geq g(z) - \delta$, so that, for all $z \in A_\delta \cap (z - \epsilon, z)$, $f(z) > g(z) + \delta$. Then, since $P_Z(A_\delta \cap (z - \epsilon, z)) > 0$, we have the contradiction
    \[g(z) \geq \essup f(A_\delta \cap (z - \epsilon, z)) > g(z).\]
    
    On the other hand, suppose, for sake of contradiction, that $P_Z(B_\delta) > 0$. Applying Lemma~\ref{lemma:positive_measure_interval} to the measure $E \mapsto P_Z(B_\delta \cap E)$, there exists $z \in \R$ such that, for any $\epsilon > 0$, $P_Z(B_\delta \cap (z - \epsilon, z)) > 0$. Since $g$ is continuous at $z$, there exists $\epsilon > 0$ such that $g(z - \epsilon) \geq g(z) - \delta$. At the same time, since $g$ is non-decreasing, for $t \in B_\delta \cap (z - \epsilon, z)$, $f(t) < g(t) - \delta \leq g(z) - \delta$. Thus, since $P_Z(B_\delta \cap (z - \epsilon, z)) > 0$, we have $h(z - \epsilon) < g(z) - \delta < g(z - \epsilon)$, contradicting the previously shown fact that $g \leq h$.
    
    To conclude, we have shown that $P_Z(\{z \in [0, 1] : g(z) \neq f(z)\}) = 0$.

    \textbf{Construction of a Stochastic Threshold Solution:} We now construct a solution to \eqref{opt} that is equal to a stochastic threshold function (i.e., a function that has the form $p 1\{z = t\} + 1\{z > t\}$) except on a set of $P_Z$-measure $0$. To show this, it suffices to construct a function $f : [0, 1] \to [0, 1]$ such that (a) $f$ is monotone non-decreasing and (b) the set $f\inv((0, 1))$ is the union of the singleton $\{t\}$ and a set of $P_Z$-measure $0$.
    
    From the previous step of this proof, we may assume that we have a solution $f$ to~\eqref{opt} that is monotone non-decreasing. It suffices therefore to show that $A := f\inv((0, 1))$ is the union of a singleton and a set of $P_Z$-measure $0$. Define
    \[t_0 := \inf \{z \in [0, 1] : P_Z(A \cap [0, z]) > 0\}
      \quad \text{ and } \quad
      t_1 := \sup\{z \in [0, 1] : P_Z(A \cap [z, 1]) > 0\}.\]
    Then, for all $\epsilon > 0$, $P_Z(A \cap [0, t_0 - \epsilon]) = P_Z(A \cap [t_1 + \epsilon, 1]) = 0$. Hence, if $t_0 = t_1$, then, since
    \[A \backslash \{t_0\} = \bigcup_{j = 1}^\infty A \cap \left( [0, t_0 - 1/j] \cup [t_1 + 1/j, 1] \right)\]
    by countable subadditivity, $P_Z(A \backslash \{t_0\}) = 0$, which implies that $A = \{t_0\} \cup (A \backslash \{t_0\})$ is the union of a singleton and a set of measure $0$.
    
    It suffices therefore to prove that $t_0 = t_1$. It is easy to see, from the definitions of $t_0$ and $t_1$, that $t_0 \leq t_1$. Suppose, for sake of contradiction, that $t_0 < t_1$. Then, there exists $t \in (t_0, t_1)$, and, by definition of $t_0$ and $t_1$, both $P_Z(A \cap [0, t)) > 0$ and $P_Z(A \cap (t, 1]) > 0$. For any $\delta \geq 0$, define
    \[B_\delta := \{z \in [0, t): \delta < f(z) < 1 - \delta\}
      \quad \text{ and } \quad C_\delta := \{z \in (t, 1]: \delta < f(z) < 1 - \delta\},\]
    so that $P_Z(B_0) > 0$ and $P_Z(C_0) > 0$. By countable subadditivity, there exists $\delta > 0$ such that $P_Z(B_\delta) > 0$ and $P_Z(C_\delta) > 0$.
    
    Define $\epsilon := \delta \cdot \min \{P_Z(B_\delta), P_Z(C_\delta)\}\} > 0$.
    Define $g : [0, 1] \to \R$ for all $z \in [0, 1]$ by
    \[g(z)
      = \left\{
          \begin{array}{cc}
            f(z) - \frac{\epsilon}{P_Z(B_\delta)} & \text{ if } z \in B_\delta \\
            f(z) + \frac{\epsilon}{P_Z(C_\delta)} & \text{ if } z \in C_\delta \\
            f(z)                                  & \text{ otherwise}.
          \end{array}
      \right.,\]
    and note that, by definition of $\epsilon$, $B_\delta$, and $C_\delta$, $g : [0, 1] \to [0, 1]$. Then,
    \[\E[g(Z)] - \E[f(Z)]
      = -\frac{\epsilon}{P_Z(B_\delta)} P_Z(B_\delta) + \frac{\epsilon}{P_Z(C_\delta)} P_Z(C_\delta)
      = 0,\]
    while
    \begin{align*}
        \E[Z g(Z)] - \E[Z f(Z)]
        & = -\E[Z | Z \in B_\delta] \frac{\epsilon}{P_Z(B_\delta)} P_Z(B_\delta)
          + \E[Z | Z \in C_\delta] \frac{\epsilon}{P_Z(C_\delta)} P_Z(C_\delta) \\
        & = \epsilon \left( \E[Z | Z \in C_\delta] - \E[Z | Z \in B_\delta] \right).
    \end{align*}
    Since $B_\delta \subseteq [0, t)$ and $C_\delta \subseteq (t, 1]$, this difference is strictly positive, contradicting the assumption that $f$ optimizes~\eqref{opt}.
\end{proof}

Combining Lemma~\ref{lemma:opt_problem} with Lemma~\ref{lemma:CMM_neyman_pearson_equivalence} completes the proof of our main result, Theorem~\ref{thm:generalized_bayes}.

\subsection{Extension to AUROC}
\label{app:AUROC}

For any regression function $\eta$, the receiver operating characteristic (ROC) function $\operatorname{ROC}_\eta : [0, 1] \to [0, 1]$ is
\begin{equation}
    \operatorname{ROC}_\eta(x) := \sup_{(p, t) \in [0, 1]^2} \truepositive_{\hat Y_{p,t,\eta}} \cdot 1\{\falsepositive_{\hat Y_{p,t,\eta}} \leq x\}
    \quad \text{ for all } \quad x \in [0, 1],
    \label{eq:ROC}
\end{equation}
i.e., $\operatorname{ROC}(x)$ is the maximum true positive \edit{probability} (over all regression-thresholding classifiers with regression function $\eta$) achievable while keeping the false positive \edit{probability} below $x$. The area under the ROC curve (AUROC) is then given by
\begin{equation}
    \operatorname{AUROC}_\eta = \int_0^1 \operatorname{ROC}_\eta(x) \, dx.
    \label{eq:AUROC}
\end{equation}
While AUROC is not a CMM (as it depends on the entire family of confusion matrices computed at all possible thresholds $(p,t) \in [0,1]^2$), AUROC is widely used to measure performance of classifiers across the classification thresholds. Here, we show that our Theorem~\ref{thm:generalized_bayes} extends naturally from CMMs to AUROC.

We begin by noting that, for any $x \in [0, 1]$, the performance measure $\truepositive \cdot 1\{\falsepositive \leq x\}$ is a CMM. Therefore, letting $\eta^*$ denote the true regression function, by Theorem~\ref{thm:generalized_bayes}, there exists a threshold $(p,t) \in [0, 1]^2$ such that the regression-thresholding classifier $\hat Y_{p,t,\eta^*} \in \argmax_{\hat Y \in \SC} \truepositive \cdot 1\{\falsepositive \leq x\}$; i.e., $\hat Y_{p,t,\eta^*}$ maximizes $\truepositive \cdot 1\{\falsepositive \leq x\}$ over all stochastic classifiers. By definition of ROC (Eq.~\eqref{eq:ROC}), it follows that, for any $x \in [0, 1]$,
\[\eta^* \in \argmax_{\eta : \X \to [0, 1]} \operatorname{ROC}_\eta(x),\]
and, by definition AUROC (Eq.~\eqref{eq:AUROC}), it then follows that
\[\eta^* \in \argmax_{\eta : \X \to [0, 1]} \operatorname{AUROC}_\eta.\]
To conclude, we have shown that thresholding the true regression function is optimal not only under any CMM but also under AUROC. A identical argument can be made for other performance measures, such as the area under the precision-recall curve (AUPRC), that aggregate CMMs across multiple classification thresholds.

\section{Relative Performance Guarantees in terms of the Generalized Bayes Classifier}
\label{app:relative_performance_guarantees}

In this Appendix, we prove Lemmas~\ref{lemma:approximation_error} and \ref{lemma:estimation_error}, as well as their consequence, Corollary~\ref{corr:CMM_error_decomposition}. Also, in Section~\ref{app:lipschitz_constants}, we demonstrate, in a few key examples, how to compute the Lipschitz constant used in Corollary~\ref{corr:CMM_error_decomposition}.

We begin with the proof of Lemma~\ref{lemma:approximation_error}, which, at a given threshold $(p, t)$, bounds the difference between the confusion matrices of the true regression function $\eta$ and an estimate $\eta'$ of $\eta$. We restate the result for the reader's convenience:
\begin{customlemma}{\ref{lemma:approximation_error}}
    Let $p,t \in [0, 1]$ and let $\eta, \eta' : \X \to [0, 1]$. Then,
    \begin{equation}
        \left\| C_{\hat Y_{p,t,\eta}} - C_{\hat Y_{p,t,\eta'}} \right\|_\infty
        \leq \prob \left[ |\eta(X) - t| \leq \left\|\eta - \eta'\right\|_\infty \right].
        \label{lemma:approximation_error_app_result}
    \end{equation}
    \label{lemma:approximation_error_app}
\end{customlemma}

\begin{proof}
    For the true negative \edit{probability}, we have
    \begin{align*}
        \left| \truenegative_{\hat Y_{p,t,\eta}} - \truenegative_{\hat Y_{p,t,\eta'}} \right|
        & = \left| \prob \left[ Y = 0, \eta'(X) \leq t < \eta(X) \right]
          - \prob[ Y = 0, \eta(X) \leq t < \eta'(X)] \right| \\
        & \leq \prob \left[ |\eta(X) - t| \leq \|\eta - \eta'\|_\infty \right].
    \end{align*}
    This type of inequality is standard and follows from the fact that, if \(t\) lies between \(\eta\) and \(\eta'\), then the difference of \(\eta\) and \(t\) is necessarily less than \(\eta\) and \(\eta'\).
    Repeating this calculation for the true positive, false positive, and false negative probabilities gives~\eqref{lemma:approximation_error_app_result}.
\end{proof}

Note that, in the presence of degree $r$ Uniform Class Imbalance (see Section~\ref{subsec:uniform_class_imbalance}), one can obtain a tighter error bound $r\prob \left[ |\eta(X) - t| \leq \|\eta - \eta'\|_\infty \right]$ for  the true positive and false negative probabilities because, for all $x \in \X$, $\prob[Y = 1|X = x] \leq r$. However, the weaker bound~\eqref{lemma:approximation_error_app_result} simplifies the exposition.

We now turn to proving Lemma~\ref{lemma:estimation_error}, which we use to bound the maximum difference between the empirical and true confusion matrices of a regression-thresholding classifier over thresholds $(p, t)$. Specifically, we will use this result to bound the difference in confusion matrices between the optimal threshold $(p^*, t^*)$ and the threshold $(\hat p, \hat t)$ selected by maximizing the empirical CMM. We actually prove a more general version of Lemma~\ref{lemma:estimation_error}, for arbitrary classifiers, based on the following definition:

\begin{definition}[Stochastic Growth Function]
    Let $\F$ be a family of $[0, 1]$-valued functions on $\X$. The \emph{stochastic growth function $\Pi_\F : \mathbb{N} \to \mathbb{N}$}, defined by
    \[\Pi_\F(n) := \max_{\substack{x_1,...,x_n \in \X,\\z_1,...,z_n \in [0, 1]}} \left| \left\{ \left( 1\{f(x_i) > z_i\} \right)_{i = 1}^n : f \in \F \right\} \right|
    \quad \text{ for all } \quad n \in \mathbb{N},\]
    is the maximum number of distinct classifications of $n$ points $x_1,...,x_n$ by a stochastic classifier $\hat Y$ with $(x \mapsto \E[\hat Y(x)]) \in \F$ and randomness given by $z_1,...,z_n$.
    \label{def:stochastic_growth_function}
\end{definition}
% \justin{Looks like a \(\#\) was needed somewhere, so I put one in. Might still need to be fixed.} \SS{Good catch, thanks!}

Definition~\ref{def:stochastic_growth_function} generalizes the growth function~\citep{mohri2018foundations}, a classical measure of the complexity of a hypothesis class originally due to \citet{vapnik2015uniform}, to non-deterministic classifiers. Importantly for our purposes, one can easily bound the stochastic growth function of regression-thresholding classifiers:

\begin{example}[Stochastic Growth Function of Regression-Thresholding Classifiers]
    Suppose
    \[\F = \left\{f : \X \to [0, 1] \middle| \text{ for some } p, t \in [0, 1], f(x) = p \cdot 1\{\eta(x) = t\} + 1\{\eta(x) > t\} \text{ for all } x \in \X \right\},\]
    so that $\{\hat Y_{f, \eta} : f \in \F\}$ is the class of regression-thresholding classifiers. Any set of points $(x_1,z_1),...,(x_n,z_n)$, can be sorted in increasing order by $\eta(x)$'s, breaking ties in decreasing order by $z$'s. Having sorted the points in this way, $\{f(x) > z\} = 0$ for the first $j$ points and $\{f(x) > z\} = 1$ for the remaining $n - j$ points, for some $j \in [n] \cup \{0\}$. Thus, $\Pi_\F(n) = n + 1$.
    \label{example:regression_thresholding_stochastic_growth_function}
\end{example}

We will now prove the following result, from which, together with Example~\ref{example:regression_thresholding_stochastic_growth_function}, Lemma~\ref{lemma:estimation_error} follows immediately:

\begin{customlemma}{\ref{lemma:estimation_error}}[Generalized Version]
    Let $\F$ be a family of $[0, 1]$-valued functions on $\X$. Then, with probability at least $1 - \delta$,
    \[\sup_{f \in \F} \left\| \hat C_{\hat Y_f} - C_{\hat Y_f} \right\|_\infty
        \leq \sqrt{\frac{8}{n} \log \frac{32\Pi_\F(2n)}{\delta}}.\]
    \label{lemma:estimation_error_app}
\end{customlemma}

% \note{The statement of Lemma~\ref{lemma:estimation_error} in the main paper is missing an exponent of $2$ on the $(2n + 1)$. Since this is inside the logarithm, this changes the bound by only a constant factor.}

Before proving Lemma~\ref{lemma:estimation_error}, we note a standard symmetrization lemma, which allows us to replace the expectation of $\hat\truenegative_{\hat Y_{p,t,\eta}}$ with its value on an independent, identically distributed ``ghost sample''.
\begin{lemma}[Symmetrization; Lemma 2 of \citet{bousquet2003introduction}]
    Let $X$ and $X'$ be independent realizations of a random variable with respect to which $\mathcal{F}$ is a family of integrable functions. Then, for any $\epsilon > 0$,
    \[\prob \left[ \sup_{f \in \mathcal{F}} f(X) - \E f(X) > \epsilon \right]
        \leq 2\prob \left[ \sup_{f \in \mathcal{F}} f(X) - f(X') > \frac{\epsilon}{2} \right].\]
    \label{lemma:symmetrization}
\end{lemma}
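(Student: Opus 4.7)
The plan is to use the classical ``ghost sample'' argument, so the proof is a single conditioning argument combined with Chebyshev's inequality.

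First, I would reduce to a pointwise statement by selecting an (approximate) maximizer. Define the event $A := \{\sup_{f \in \mathcal{F}} f(X) - \E f(X) > \epsilon\}$ on which the left-hand probability is supported. On $A$, let $f^\star = f^\star(X) \in \mathcal{F}$ be a measurable selection such that $f^\star(X) - \E f^\star(X) > \epsilon$; off $A$, pick $f^\star$ arbitrarily. (If $\mathcal{F}$ is uncountable the selection is carried out up to a vanishing slack $\epsilon'$ via a countable dense subfamily, taking $\epsilon' \downarrow 0$ at the end; alternatively one invokes a measurable selection theorem. This is the usual glossed step in such symmetrization proofs and I would treat it as standard.)

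Second, I would bound from below the conditional probability that the ghost sample is not too extreme. Since $X'$ is independent of $X$ and $f^\star$ is a function of $X$ alone, conditional on $X$ the random variable $f^\star(X')$ has the same distribution as if $f^\star$ were fixed. By Chebyshev's inequality,
\[
\prob\!\left[f^\star(X') - \E f^\star(X') > \tfrac{\epsilon}{2} \,\Big|\, X\right]
\;\leq\; \frac{4 \operatorname{Var}(f^\star(X'))}{\epsilon^2}.
\]
In every invocation of this lemma in the paper, $X$ is an $n$-sample and each $f \in \mathcal{F}$ is an average of $n$ independent $[0,1]$-bounded terms, so $\operatorname{Var}(f^\star(X')) \leq 1/(4n)$ and the displayed bound is $\leq 1/(n\epsilon^2)$; for $n\epsilon^2 \geq 2$ this yields the clean constant $1/2$. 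Thus, on $A$, with conditional probability at least $1/2$, $f^\star(X') \leq \E f^\star(X') + \epsilon/2 = \E f^\star(X) + \epsilon/2$.

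Third, I would combine the two pieces. On $A$ intersected with the Chebyshev event just described,
\[
f^\star(X) - f^\star(X') \;>\; \bigl(\E f^\star(X) + \epsilon\bigr) - \bigl(\E f^\star(X) + \tfrac{\epsilon}{2}\bigr) \;=\; \tfrac{\epsilon}{2},
\]
so $\sup_{f \in \mathcal{F}} f(X) - f(X') > \epsilon/2$; call this latter event $B$. Then
\[
\prob(B) \;\geq\; \E\!\left[\ind_A\, \prob\!\left(f^\star(X') - \E f^\star(X') \leq \tfrac{\epsilon}{2}\,\big|\,X\right)\right]
\;\geq\; \tfrac{1}{2}\,\prob(A),
\]
which rearranges to exactly the claimed inequality.

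The main obstacle is the measurability/selection of $f^\star(X)$: writing down $f^\star$ as a genuinely measurable map $X \mapsto \mathcal{F}$ requires either a countable/separable $\mathcal{F}$ or a measurable selection theorem, and this is the step one glosses by passing to a countable dense subclass and an $\epsilon'$ approximation. The Chebyshev step itself is straightforward given the boundedness of $f$, and the geometric combination step is essentially bookkeeping.
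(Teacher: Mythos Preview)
The paper does not supply its own proof of this lemma; it is simply quoted from \citet{bousquet2003introduction} and then invoked in the proof of Lemma~\ref{lemma:estimation_error}. So there is nothing in the paper to compare your argument against.

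Your argument is the standard ghost-sample proof and is essentially correct. One point worth making explicit: as stated, the inequality with the clean constant $2$ does \emph{not} hold for an arbitrary random variable $X$; one needs the Chebyshev step to yield at most $1/2$, which is where the usual side condition (e.g., $n\epsilon^2 \geq 2$ when $X$ is an $n$-sample average of $[0,1]$-bounded terms) enters. You noticed this and handled it by appealing to the specific form of $X$ in the paper's application, which is the right move; just be aware that the lemma as written in the paper is really a shorthand for the version with that side condition. The measurable-selection caveat you flag is also genuine and is handled exactly as you describe (countable dense subclass plus a vanishing slack, or outer probabilities).
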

We now use this lemma to prove Lemma~\ref{lemma:estimation_error}.

\begin{proof}
    To facilitate analyzing the stochastic aspect of the classifier $\hat Y_{f,\eta}$, let $Z_1,...,Z_n \stackrel{IID}{\sim} \operatorname{Uniform}([0, 1])$, such that $\hat Y_{f,\eta}(X_i) = 1\{Z_i < f(\eta((X_i))\}$.
    
    Now suppose that we have a ghost sample $(X_1',Y_1',Z_1'),...,(X_n',Y_n',Z_n')$. Let $\hat\truenegative'_{\hat Y_{f,\eta}}$ denote the empirical true negative \edit{probability} computed on this ghost sample, and let $\hat\truenegative^{(i)}_{\hat Y_{f,\eta}}$ denote the empirical true negative \edit{probability} computed on
    \[(X_1,Y_1,Z_1),...,(X_{i-1},Y_{i-1}Z_{i-1}),(X_i',Y_i',Z_i'),(X_{i+1},Y_{i+1},Z_{i+1}),...(X_n,Y_n,Z_n)\]
    (i.e., replacing only the $i^{th}$ sample with its ghost).
    By the Symmetrization Lemma,
    \begin{align}
        \prob \left[ \sup_{f \in \F} \hat\truenegative_{\hat Y_{f,\eta}} - \E \hat\truenegative_{\hat Y_{f,\eta}} > \epsilon \right]
        \notag
        & \leq 2\prob \left[ \sup_{f \in \F} \hat\truenegative_{\hat Y_{f,\eta}} - \hat\truenegative'_{\hat Y_{f,\eta}} > \epsilon/2 \right] \\
        % \notag
        % & = 2\prob \left[ \sup_{\substack{t \in \{1, \eta(X_1), ..., \eta(X_n), \eta(X_1'), ..., \eta(X_n')\},\\ p \in \{1, Z_1,...,Z_n,Z_1',...,Z_n'\}}} \hat\truenegative_{\hat Y_{f,\eta}} - \hat\truenegative'_{\hat Y_{f,\eta}} > \epsilon/2 \right] \\
        \notag
        & \leq 2\Pi_\F(2n) \sup_{f \in \F} \prob \left[ \hat\truenegative_{\hat Y_{f,\eta}} - \hat\truenegative'_{\hat Y_{f,\eta}} > \epsilon/2 \right] \\
        \label{ineq:symmetrization}
        & \leq 4\Pi_\F(2n) \sup_{f \in \F} \prob \left[ \hat\truenegative_{\hat Y_{f,\eta}} - \E \hat\truenegative_{\hat Y_{f,\eta}} > \epsilon/4 \right],
    \end{align}
    where the second inequality is a union bound over the $\Pi_\F(2n)$ distinct classifications of $2n$ points that can be assigned by $\hat Y_{f,\eta}$ with $f \in \F$, and the last inequality is from the fact that $\hat\truenegative_{\hat Y_{f,\eta}}$ and $\hat\truenegative'_{\hat Y_{f,\eta}}$ are identically distributed and the algebraic fact that, if $a - b > \epsilon$, then either $a - c > \epsilon/2$ or $b - c > \epsilon/2$.
    
    For any particular $f \in \F$, by McDiarmid's inequality~\citep{mcdiarmid1998concentration},
    \begin{equation}
        \prob \left[ \hat\truenegative_{\hat Y_{f,\eta}} - \E\hat\truenegative_{\hat Y_{f,\eta}} > \epsilon/4 \right]
        \leq e^{-n\epsilon^2/8},
        \label{ineq:application_of_mcdiarmid}
    \end{equation}
    since, for any $i \in [n]$,
    \[\left| \hat\truenegative_{\hat Y_{f,\eta}} - \hat\truenegative^{(i)}_{\hat Y_{f,\eta}} \right|
        = \frac{1}{n} \left| 1\left\{Y_i = \hat Y_{f,\eta}(X_i) = 0 \right\}
        - 1\left\{Y_i' = \hat Y_{f,\eta}(X_i') = 0 \right\} \right|
        \leq \frac{1}{n}.\]
    Plugging Inequality~\eqref{ineq:application_of_mcdiarmid} into Inequality~\eqref{ineq:symmetrization} gives
    \[\prob \left[ \sup_{f \in \F} \hat\truenegative_{\hat Y_{f,\eta}} - \E \hat\truenegative_{\hat Y_{f,\eta}} > \epsilon \right]
        \leq 4 \Pi_\F(2n) e^{-n\epsilon^2/8}.\]
    Repeating this argument with $-\hat\truenegative$ instead of $\hat\truenegative$, as well as with $\hat\truepositive$, $\hat\falsenegative$, $\hat\falsepositive$ and their negatives, and taking a union bound over these $8$ cases, gives the desired result.
\end{proof}

Finally, we will use these two lemmas, together with the margin and Lipschitz assumptions, to prove Corollary~\ref{corr:CMM_error_decomposition}, which bounds the sub-optimality of the trained classifier, relative to the generalized Bayes classifier, in terms of the desired CMM.

\begin{customcorollary}{\ref{corr:CMM_error_decomposition}}
    Let $\eta : \X \to [0, 1]$ denote the true regression function, and let $\hat\eta : \X \to [0, 1]$ denote any empirical regressor.
    Let
    \[\left( \hat p, \hat t \right) := \argmax_{(p, t) \in [0, 1]^2} M \left( \hat C_{\hat Y_{p,t,\hat\eta}} \right)
    \quad \text{ and } \quad
    \left( p^*, t^* \right) := \argmax_{(p, t) \in [0, 1]^2} M \left(C_{\hat Y_{p,t,\eta}} \right)\]
    denote the empirically selected and true optimal thresholds, respectively. Suppose that $M$ is Lipschitz continuous with constant $L_M$ with respect to the uniform ($\L_\infty$) metric on $\C$. Finally, suppose that $P_X$ and $\eta$ satisfies a $(C, \beta)$-margin condition around $t^*$. Then, with probability at least $1 - \delta$,
    \begin{align}
        \notag
        M\left(C_{\hat Y_{p,t,\eta}}\left(p^*, t^*\right)\right) - M\left(C_{\hat Y_{p,t,\hat\eta}}\left(\hat p, \hat t\right)\right)
        & \leq L_M \left( C\left\|\eta - \hat\eta\right\|_\infty^\beta + 2 \sqrt{\frac{8}{n} \log \frac{32(2n + 1)}{\delta}} \right).
    \end{align}
    \label{corr:CMM_error_decomposition_app}
\end{customcorollary}

\begin{proof}
    First, note that
    \begin{align*}
        M\left(C_{\hat Y_{p^*, t^*,\eta}}\right) - M\left(C_{\hat Y_{\hat p, \hat t,\hat\eta}}\right)
        & \leq M\left(C_{\hat Y_{p^*, t^*,\eta}}\right) - M\left(C_{\hat Y_{p^*, t^*,\hat\eta}}\right) \\
        & + M\left(C_{\hat Y_{p^*, t^*,\hat\eta}}\right) - M\left(\hat C_{\hat Y_{p^*, t^*,\hat\eta}}\right) \\
        & + M\left(\hat C_{\hat Y_{\hat p, \hat t,\hat\eta}}\right) - M\left(C_{\hat Y_{\hat p, \hat t,\hat\eta}}\right),
    \end{align*}
    since, by definition of $(\hat p, \hat t)$,
    \[M\left(\hat C_{\hat Y_{p^*,t^*,\hat\eta}}\right) - M\left(\hat C_{\hat Y_{\hat p,\hat t,\hat\eta}} \right) \leq 0;\]
    this term sits between the second and third lines above.
    By the Lipschitz assumption,
    \begin{align}
        \notag
        & M\left(C_{\hat Y_{p^*, t^*,\eta}}\right) - M\left(C_{\hat Y_{\hat p, \hat t,\hat\eta}}\right) 
        \\
        \label{term:approximation_error}
         &\leq L_M \bigg( \left\| C_{\hat Y_{p^*, t^*,\eta}} - C_{\hat Y_{p^*, t^*,\hat\eta}} \right\|_\infty
        \\ \label{term:estimation_error_term1} 
        & \hspace{2.62em}
         + \left\| C_{\hat Y_{p^*, t^*,\hat\eta}} - \hat C_{\hat Y_{p^*, t^*,\hat\eta}} \right\|_\infty 
        \\
        \label{term:estimation_error_term2} 
         & \hspace{2.62em}
         + \left\| \hat C_{\hat Y_{\hat p, \hat t,\hat\eta}} - C_{\hat Y_{\hat p, \hat t,\hat\eta}} \right\|_\infty \bigg).
    \end{align}
    Corollary~\ref{corr:CMM_error_decomposition} follows by applying Lemma~\ref{lemma:approximation_error} and the $(C,\beta)$-margin condition to \eqref{term:approximation_error} and applying Lemma~\ref{lemma:estimation_error} to both terms \eqref{term:estimation_error_term1} and \eqref{term:estimation_error_term2}.
\end{proof}

\subsection{Lipschitz constants for some common CMMs}
\label{app:lipschitz_constants}
Corollary~\ref{corr:CMM_error_decomposition} assumed that the CMM $M$ was Lipschitz continuous with respect to the $\sup$-norm on confusion matrices.
In this section, we show how to compute appropriate Lipschitz constants for several simple example CMMs. We begin with a simple example:

\begin{example}[Weighted Accuracy]
For a fixed $w \in (0, 1)$, the $w$-weighted accuracy is given by $M(C) = (1 - w) \truepositive + w \truenegative$. In this case, $M$ clearly has Lipschitz constant $L_M = \max\{w, 1 - w\}$.
\end{example}

For the remainder of this section (only), we will use $P := \E[Y]$ to denote the positive \edit{probability} of the true labels and $\hat P := \frac{1}{n} \sum_{i = 1}^n Y_i$ to denote the empirical positive \edit{probability} of the true labels.
% and $P_{\hat Y} := \truepositive + \falsepositive$ and $N_{\hat Y} := \truenegative + \falsenegative$ to denote the positive and negative probabilities of the classifier $\hat Y$.
Many CMMs of interest, such as Recall and $F_\beta$ scores, are not Lipschitz continuous over all of $\C$. Fortunately, inspecting the proof of Corollary~\ref{corr:CMM_error_decomposition}, it suffices for the CMM $M$ to be Lipschitz continuous on the line segments between three specific pairs of confusion matrices, given in Eqs.~\eqref{term:approximation_error}, \eqref{term:estimation_error_term1}, and \eqref{term:estimation_error_term2}. Deriving the appropriate Lipschitz constants is a bit more complex, and we demonstrate here how to derive them for the specific CMMs of Recall and $F_\beta$ scores.

Of the six confusion matrices in Eqs.~\eqref{term:approximation_error}, \eqref{term:estimation_error_term1}, and \eqref{term:estimation_error_term2}, four are true confusion matrices, while the other two are empirical. The four true confusion matrices have the same positive \edit{probability} $\truepositive + \falsenegative = P$, which is a function of the true distribution of labels. The two empirical confusion matrices have the positive \edit{probability} $\hat\truepositive + \hat\falsenegative = \hat P$, which is a function of the data. By a multiplicative Chernoff bound, with probability at least $1 - e^{-nP/8}$, $\hat P \geq P/2$. Thus, with high probability, it suffices for the CMM $M$ to be Lipschitz continuous over confusion matrices with positive \edit{probability} at least $P/2$. For Recall and $F_\beta$ scores, this gives the following Lipschitz constants:

\begin{example}[Recall]
    Recall is given by $M(C) = \frac{\truepositive}{\truepositive + \falsenegative} = \frac{\truepositive}{P}$. Thus, $M$ is Lipschitz continuous with constant $L_M = \frac{2}{P}$ over the confusion matrices in Eqs.~\eqref{term:approximation_error}, \eqref{term:estimation_error_term1}, and \eqref{term:estimation_error_term2}.
    \label{ex:recall_lipschitz}
\end{example}

\begin{example}[$F_\beta$ Score]
    For $\beta \in (0, \infty)$, the $F_\beta$ score is given by
    \[M(C)
      = \frac{(1 + \beta^2) \truepositive}{(1 + \beta^2) \truepositive + \falsepositive + \beta^2 \falsenegative}
      = \frac{(1 + \beta^2) \truepositive}{\truepositive + \falsepositive + \beta^2 P}.\]
    Hence,
    \[\left| \frac{\partial}{\partial\truepositive} M(C) \right|
      = (1 + \beta^2) \frac{\falsepositive + \beta^2 P}{\left( \truepositive + \falsepositive + \beta^2 P \right)^2}
      \leq \frac{1 + \beta^2}{\beta^2 P},\]
    while, since $\truepositive \leq P$,
    \[\left| \frac{\partial}{\partial\falsepositive} M(C) \right|
      = (1 + \beta^2) \frac{\truepositive}{\left( \truepositive + \falsepositive + \beta^2 P \right)^2}
      \leq \frac{1 + \beta^2}{\beta^4 P}.\]
    Hence, $M$ is Lipschitz continuous with constant $\frac{2(1 + \beta^2)}{P} \max \left\{ \beta^{-2}, \beta^{-4} \right\}$ over the confusion matrices in Eqs.~\eqref{term:approximation_error}, \eqref{term:estimation_error_term1}, and \eqref{term:estimation_error_term2}.
    \label{ex:f_beta_lipschitz}
\end{example}

As Examples~\ref{ex:recall_lipschitz} and \ref{ex:f_beta_lipschitz} demonstrate, the Lipschitz constants of some CMMs can become large when the proportion $P$ is positive samples is small. In particular, when $P \in O \left( \sqrt{\frac{\log n}{n}} \right)$, the $\asymp L_M \sqrt{\frac{\log(n/\delta)}{n}}$ term of Corollary~\ref{corr:CMM_error_decomposition} fails to vanish as $n \to \infty$. We believe that some loss of convergence rate is inevitable if $P \to 0$ as $n \to \infty$, due to the inherent instability of such metrics, but further work is needed to understand if the rates given by Corollary~\ref{corr:CMM_error_decomposition} are optimal under these metrics.
See also \citet{dembczynski2017consistency} for detailed discussion of Lipschitz constants of many common CMMs.

\section{Bounds on Uniform Error of the Nearest Neighbor Regressor}
\label{app:UniformConvergenceProof}

In this appendix, we prove our upper bound on the uniform risk of the $k$NN regressor (Theorem~\ref{thm:unif_convergence}), as well as the corresponding minimax lower bound (Theorem~\ref{theorem:UniformErrorLowerBound}).

\subsection{Upper Bounds}
\label{app:knn_upper_bound_proofs}
Here, we prove Theorem~\ref{thm:unif_convergence}, our upper bound on the uniform error of the $k$-NN regressor, restated below:
\begin{customthm}{\ref{thm:unif_convergence}}
    Under Assumptions~\ref{assumption:denseCovariates} and \ref{assumption:holderContinuity}, whenever $k / n \leq p_*(\epsilon^*)^d / 2$,
    for any
    $\delta > 0$, with probability at least $1 - N\left( \left( 2k / (p_* n) \right)^{1/d} \right) e^{-k/4} - \delta$,
    we have the uniform error bound
    \begin{equation}
        \left\|\eta - \hat\eta\right\|_\infty \leq 2^\alpha Lr\left( \frac{2k}{p_* n} \right)^{\alpha/d}
        + \frac{2}{3k} \log \frac{2 S(n)}{\delta} + \sqrt{\frac{2r}{k} \log \frac{2 S(n)}{\delta}}.
    \label{eq:uniform_error_bound_app}
    \end{equation}
\end{customthm}

\begin{proof}
For any $x \in \X$, let
\[\tilde \eta_k(x) := \frac{1}{k} \sum_{j = 1}^k \eta(X_{\sigma_j(x)})\] denote the mean of the true regression function over the $k$ nearest neighbors of $x$. By the triangle inequality,
\[\|\eta - \hat\eta\|_\infty
  \leq \|\eta - \tilde \eta_k\|_\infty
     + \|\tilde \eta_k - \hat \eta\|_\infty,\]
wherein $\|\eta - \tilde \eta_k\|_\infty$ captures bias due to smoothing and $\|\tilde \eta_k - \hat \eta\|_\infty$ captures variance due to label noise. We separately show that, with probability at least $1 - N \left( \left( \frac{2k}{p_* n} \right)^{1/d} \right) e^{-k/4}$,
\[\left\| \eta - \tilde \eta_k \right\|_\infty
  \leq 2^\alpha Lr \left( \frac{2k}{p_* n} \right)^{\alpha/d},\]
and that, with probability at least $1 - \delta$,
\[\|\tilde \eta_k - \hat \eta\|_\infty
  \leq \frac{2}{3k} \log \frac{2 S(n)}{\delta}
  + \sqrt{\frac{2r}{k} \log \frac{2 S(n)}{\delta}}.\]

%---------------------------------------------%
%---------------------------------------------%
\paragraph{Bounding the smoothing bias}

Fix some $r > 0$ to be determined, and let $\{B_r(z_1),...,B_r(z_{N(r)})\}$ be a covering of $(\X, \rho)$ by $N(r)$ balls of radius $r$, with centers $z_1,...,z_{N(r)} \in \X$.

By the lower bound assumption on $P_X$, each $P_X(B_r(z_j)) \geq p_* r^d$. Therefore, by a multiplicative Chernoff bound, with probability at least $1 - N(r) e^{-p_* n r^d/8}$, each $B_r(z_j)$ contains at least $p_* n r^d/2$ samples. In particular, if $r \geq \left( \frac{2k}{p_* n} \right)^{1/d}$, then each $B_k$ contains at least $k$ samples, and it follows that, for every $x \in \X$, $\rho(x, X_{\sigma_k(x)}) \leq 2r$.
Thus, by H\"older continuity of $\eta$,
\[
\left| \eta(x) - \tilde \eta_k(x) \right|
  = \left| \eta(x) - \frac{1}{k} \sum_{j = 1}^k \eta(X_{\sigma_j(x)}) \right|
  \leq \frac{1}{k} \sum_{j = 1}^k \left| \eta(x) - \eta(X_{\sigma_j(x)}) \right|
  \leq L(2r)^\alpha.\]
Finally, if $\frac{k}{n} \leq \frac{p_*}{2} (r^*)^d$, then we can let $r = \left( \frac{2k}{p_* n} \right)^{1/d}$.

%---------------------------------------------%
%---------------------------------------------%
\paragraph{Bounding variance due to label noise}

Let $\Sigma := \{\sigma(x) \in [n]^k : x \in \X \}$ denote the set of possible $k$-nearest neighbor index sets. One can check from the definition of the shattering coefficient that $|\Sigma| \leq S(n)$.

For any $\sigma \in [n]^k$, let $Z_\sigma := \sum_{j = 1}^k Y_{\sigma_j}$ and let $\mu_\sigma := \E\left[ Z_\sigma \right]$. Note that the conditional random variables $Y_{\sigma_j}|X_1,...,X_n$ have conditionally independent Bernoulli distributions with means $\E[Y_{\sigma_j}|X_1,...,X_n] = \eta(X_{\sigma_j})$ and variances $\E \left[ \left( Y_{\sigma_j} - \eta(X_{\sigma_j}) \right)^2 |X_1,...,X_n \right] = \eta(X_{\sigma_j}) (1 - \eta(X_{\sigma_j})) \leq r$.
Therefore, by Bernstein's inequality (Eq. (2.10) of \cite{boucheron2013}), for any $\epsilon > 0$,
\begin{equation}
    \prob \left[ |Z_\sigma/k - \mu_\sigma| \geq \epsilon \right] \leq 2 \exp \left( -\frac{k\epsilon^2}{2(r + \epsilon/3)} \right).
    \label{ineq:bernstein}
\end{equation}
% \SS{TODO: Use Bennett's inequality instead of Bernstein's to improve this by a $\log r$ factor, giving the exact minimax rate.
% \[\prob \left[ |Z_\sigma/k - \mu_\sigma| \geq \epsilon \right]
%   \leq 2\exp \left( -\sigma^2  h \left( \frac{\epsilon k}{\sigma^2} \right) \right)
%   \leq 2\exp \left( - \frac{\epsilon k}{2} \log \left( \frac{\epsilon k}{\sigma^2} \right) \right).\]
% Since the right-hand side is non-decreasing with $\sigma^2$ and $\sigma^2 \leq kr$, we have
% \[\prob \left[ |Z_\sigma/k - \mu_\sigma| \geq \epsilon \right]
%   \leq 2\exp \left( - \frac{\epsilon k}{2} \log \left( \frac{\epsilon}{r} \right) \right).\]
% This would eventually give
% \[\prob \left( \sup_{x \in \X} \left\| \tilde \eta_k(x) - \hat \eta(x) \right\|_\infty > \epsilon \right)
%   \leq 2 S(n) \exp \left( - \frac{\epsilon k}{2} \log \left( \frac{\epsilon}{r} \right) \right).\]
% If $\epsilon \geq 2/k$, then this implies
% \[\prob \left( \sup_{x \in \X} \left\| \tilde \eta_k(x) - \hat \eta(x) \right\|_\infty > \epsilon \right)
%   \leq 2 S(n) \exp \left( - \frac{k}{2} \epsilon + \log r \right)
%   = 2 r S(n) \exp \left( - \frac{k}{2} \epsilon \right).\]
% and so
% \[\prob \left( \sup_{x \in \X} \left\| \tilde \eta_k(x) - \hat \eta(x) \right\|_\infty > \epsilon \right)
%   \leq 2 S(n) \exp \left( - \frac{\epsilon k}{2} \log \epsilon + \log r \right)
%   = 2 r S(n) \exp \left( - \frac{\epsilon k}{2} \log \epsilon \right).\]
% To get the final rate, we need to select $\epsilon$ such that
% $\epsilon \geq \frac{2}{k} \log \frac{2 r S(n)}{\delta}$.
% }
Moreover, for any $x \in \X$, $\mu_{\sigma(x)} = \tilde \eta_k(x)$ and $Z_{\sigma(x)}/k = \hat \eta(x)$. Hence, by a union bound over $\sigma$ in $\Sigma$,
\begin{align*}
    \prob \left( \sup_{x \in \X} \left| \tilde \eta_k(x) - \hat \eta(x) \right| > \epsilon \middle| X_1,...,X_n \right)
    & = \prob \left( \sup_{x \in \X} \left| \mu_{\sigma(x)} - Z_{\sigma(x)}/k\right| > \epsilon \middle| X_1,...,X_n \right) \\
    & \leq \prob \left( \sup_{\sigma \in \Sigma} \left| \mu_\sigma - Z_\sigma/k\right| > \epsilon \middle| X_1,...,X_n \right) \\
    & \leq |\Sigma| \sup_{\sigma \in \Sigma} \prob \left( \left| \mu_\sigma - Z_\sigma/k\right| > \epsilon \middle| X_1,...,X_n \right) \\
    & \leq 2S(n) \exp \left( -\frac{k\epsilon^2}{2(r + \epsilon/3)} \right).
\end{align*}
Since the right-hand side is independent of $X_1,...,X_n$, the unconditional bound
\[\prob \left( \sup_{x \in \X} \left\| \tilde \eta_k(x) - \hat \eta(x) \right\|_\infty > \epsilon \right)
  \leq 2S(n) \exp \left( -\frac{k\epsilon^2}{2(r + \epsilon/3)} \right)\]
follows. Plugging in
\[\epsilon
  = \frac{1}{3k} \log \frac{2 S(n)}{\delta} + \sqrt{\left( \frac{1}{3k} \log \frac{2 S(n)}{\delta} \right)^2 + \frac{2r}{k} \log \frac{2 S(n)}{\delta}}
  \leq \frac{2}{3k} \log \frac{2 S(n)}{\delta} + \sqrt{\frac{2r}{k} \log \frac{2 S(n)}{\delta}}\]
and simplifying gives the final result.
\end{proof}

Recall that there is a small (polylogarithmic in $r$) gap between our upper and lower bounds. We believe that the upper bound may be slightly loose, and that this might be tightened by using a stronger concentration inequality, such as Bennett's inequality~\citep{bennett1962probability}, instead of Bernstein's inequality in Inequality~\eqref{ineq:bernstein}.

%---------------------------------------------%
%---------------------------------------------%

Naively applying Theorem~\ref{thm:unif_convergence} results in very slow convergence rates in high dimensions. For this reason, we close this section with a corollary of Theorem~\ref{thm:unif_convergence}, illustrating that the convergence rates provided by Theorem~\ref{thm:unif_convergence} improve if the covariates are assumed to lie on an (unknown) lower dimensional manifold:

\begin{corollary}[Implicit Manifold Case]
Suppose $Z$ is a $[0,1]^d$-valued random variable
with a density lower bounded away from $0$, and suppose that, for some Lipschitz map $T : [0,1]^d \to \R^D$, $X = T(Z)$.
Then, $N(\epsilon) \leq (2/\epsilon)^d$, and $S(n) \leq 2n^{D + 1} + 2$, and so, by Theorem~\ref{thm:unif_convergence}, $k \asymp n^{\frac{2\alpha}{2\alpha+d}} (\log n)^{\frac{d}{2\alpha+d}} r^{-\frac{d}{2\alpha + d}}$,
\[\left\|\eta - \hat\eta\right\|_\infty
  \in O_P \left( \left( \frac{\log n}{n} \right)^{\frac{\alpha}{2\alpha+d}} r^\frac{\alpha + d}{2\alpha + d} \right).\]
\end{corollary}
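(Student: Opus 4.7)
The plan is to verify the two complexity bounds claimed in the statement ($N(\epsilon) \leq (2/\epsilon)^d$ and $S(n) \leq 2n^{D+1} + 2$) and then substitute directly into Theorem~\ref{thm:unif_convergence}. Since the rate expression is an immediate algebraic consequence of that substitution once $\log S(n) \in O(D \log n) = O(\log n)$, the two complexity estimates constitute essentially the whole proof.

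First I would bound the covering number. The support of $X$ is $T([0,1]^d)$, which is the image under a Lipschitz map of the unit cube. Letting $L_T$ denote the Lipschitz constant of $T$, a standard grid argument covers $[0,1]^d$ by at most $\lceil \sqrt{d}/(2\epsilon')\rceil^d$ Euclidean balls of radius $\epsilon'$; pushing these forward through $T$ yields a covering of $T([0,1]^d)$ by balls of radius $L_T \epsilon'$ centered at the images of the grid points. Setting $\epsilon = L_T \epsilon'$ gives $N(\epsilon) \in O(\epsilon^{-d})$, and the specific constant $(2/\epsilon)^d$ stated in the corollary follows after absorbing $L_T$ and the factor $\sqrt{d}$ into the implied constants (or choosing units so that $L_T \sqrt{d}/2 \leq 1$). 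The important feature for the rate is the exponent $d$, which reflects the intrinsic dimension of the support rather than the ambient dimension $D$.

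Next I would bound the shattering coefficient. Since $\rho$ is the Euclidean metric on $\R^D$, balls in $(\X, \rho)$ are restrictions of Euclidean balls in $\R^D$ to $\X$. It is a classical fact (see e.g.\ Devroye, Gy\"orfi, and Lugosi) that the class of open Euclidean balls in $\R^D$ has VC dimension exactly $D+1$, because a ball can be written as the positive sublevel set of an affine function augmented with a squared-norm feature, putting it in a $(D+1)$-dimensional linear space. By Sauer's lemma, this bounds the number of distinct intersections of $n$ fixed points with balls by $\sum_{i=0}^{D+1} \binom{n}{i}$, which for $n \geq D+1$ is dominated by $2 n^{D+1}$; a trivial adjustment of $+2$ handles the small-$n$ case, yielding $S(n) \leq 2 n^{D+1} + 2$.

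Finally I would substitute. Because $\log S(n) \leq \log(2 n^{D+1} + 2) = (D+1) \log n + O(1)$, the logarithmic factor in Theorem~\ref{thm:unif_convergence} collapses to $O(\log n)$, and Assumption~\ref{assumption:denseCovariates} with the stated exponent $d$ holds because $Z$ has a density bounded away from zero on $[0,1]^d$ and $T$ is Lipschitz (so preimages of small balls contain sets of volume $\gtrsim \epsilon^d$). Plugging in the optimal $k$ prescribed by Theorem~\ref{thm:unif_convergence} then produces exactly the claimed rate. The only mildly delicate step is verifying the dense-covariates assumption in the intrinsic $d$-dimensional sense under the Lipschitz embedding, but this reduces to the straightforward observation that the preimage under an $L_T$-Lipschitz map of $B(x,\epsilon) \cap \X$ contains a Euclidean ball of radius $\epsilon/L_T$ in $[0,1]^d$ whenever $x \in \X$.
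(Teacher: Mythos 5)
Your proposal is correct and matches what the paper implicitly relies on: the paper states this corollary without a separate proof, simply asserting the two complexity bounds and invoking Theorem~\ref{thm:unif_convergence}, and your grid-covering argument pushed through the Lipschitz map, the VC-dimension-$(D+1)$-plus-Sauer bound for Euclidean balls, and the verification of Assumption~\ref{assumption:denseCovariates} via preimages of balls are precisely the standard justifications being taken for granted. The only cosmetic caveat, which you already flag, is that the literal constant $(2/\epsilon)^d$ absorbs the Lipschitz constant of $T$ and the $\sqrt{d}$ factor; only the exponent $d$ matters for the rate.
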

This shows that, if the $D$ covariates lie implicitly on a $d$-dimensional manifold, convergence rates depend on $d$, which may be much smaller than $D$.

\subsection{Lower Bounds}
\label{app:knn_lower_bound_proofs}

In this section, we prove Theorem~\ref{theorem:UniformErrorLowerBound}, our lower bound on the minimax uniform error of estimating a H\"older continuous regression function. We use a standard approach based on the following version of Fano's lemma:
\begin{lemma} (Fano's Lemma; Simplified Form of Theorem 2.5 of \citealt{tsybakov2009introduction})
    Fix a family $\P$ of distributions over a sample space $\X$ and fix a pseudo-metric $\rho : \P \times \P \to [0,\infty]$ over $\P$. Suppose there exist $P_0 \in \P$ and a set $T \subseteq \P$ such that
    \[\sup_{P \in T} D_{KL}(P,P_0)
      \leq \frac{\log |T|}{16},\]
    where $D_{KL} : \P \times \P \to [0,\infty]$ denotes Kullback-Leibler divergence.
    Then,
    \[\inf_{\hat P} \sup_{P \in \P} 
    \prob \left( \rho(P,\hat P)
      \geq \frac{1}{2} \inf_{P \in T} \rho(P,P_0) \right) \geq 1/8,\]
    where the first $\inf$ is taken over all estimators $\hat P$.
    \label{thm:tsybakov_fano}
\end{lemma}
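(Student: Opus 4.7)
The plan is to reduce the estimation problem to a multiple hypothesis testing problem over the finite set $\{P_0\} \cup T$, then invoke the classical (multi-hypothesis) Fano inequality. Let $s := \inf_{P \in T} \rho(P, P_0)$ denote the minimum separation. Given any estimator $\hat P$, define the induced test
\[\psi(\hat P) := \argmin_{Q \in \{P_0\} \cup T} \rho(\hat P, Q),\]
breaking ties arbitrarily, so that $\psi$ returns one of the $|T|+1$ candidate hypotheses.

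First I would show, using only the pseudo-metric triangle inequality, that $\rho(\hat P, P) < s/2$ implies $\psi(\hat P) = P$ whenever the truth $P$ lies in $\{P_0\} \cup T$: any other candidate $Q \neq P$ satisfies $\rho(Q, P) \geq s$, hence $\rho(\hat P, Q) \geq \rho(Q, P) - \rho(\hat P, P) > s/2 > \rho(\hat P, P)$. Contrapositively, $\psi(\hat P) \neq P$ implies $\rho(\hat P, P) \geq s/2$, and therefore
\[\sup_{P \in \P} \prob\bigl(\rho(\hat P, P) \geq s/2\bigr) \;\geq\; \sup_{P \in \{P_0\}\cup T} \prob(\psi(\hat P) \neq P),\]
since $\{P_0\} \cup T \subseteq \P$. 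This reduces the desired estimation lower bound to a lower bound on the minimax testing error over $|T|+1$ hypotheses.

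Next I would invoke the standard multi-hypothesis Fano inequality, which states that for any test $\psi$,
\[\sup_{P \in \{P_0\}\cup T} \prob(\psi \neq P) \;\geq\; 1 - \frac{\tfrac{1}{|T|}\sum_{P \in T} D_{KL}(P, P_0) + \log 2}{\log |T|}.\]
Substituting the hypothesis $\sup_{P \in T} D_{KL}(P, P_0) \leq \tfrac{1}{16}\log |T|$, the right-hand side is at least $1 - \tfrac{1}{16} - \tfrac{\log 2}{\log |T|}$, which exceeds $1/8$ for all $|T|$ large enough that $\tfrac{\log 2}{\log |T|} \leq 13/16$; the remaining small-$|T|$ cases are trivial since the conclusion becomes vacuous when $T$ is too small for the KL hypothesis to hold. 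Since the statement is advertised as a simplified form of Tsybakov's Theorem 2.5, in practice I would simply cite that result, and only carry out the testing reduction in detail.

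The main obstacle is tracking the exact numerical constant $1/8$: different versions of Fano (the classical form above, Birg\'e's lemma, or Tsybakov's refined form using $\sqrt{M}/(1+\sqrt{M})$) yield slightly different constants, and one must verify that the specific $1/16$ in the KL hypothesis leaves enough slack after subtracting the $\log 2$ correction. The cleanest route is to apply Tsybakov's version directly, so that the only nontrivial work is the estimation-to-testing reduction via the triangle inequality sketched above.
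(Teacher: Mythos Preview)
The paper does not actually prove this lemma; it is quoted as a ``simplified form'' of Theorem~2.5 in Tsybakov's book and used as a black box. The \texttt{proof} environment that immediately follows the lemma in the paper is in fact the proof of Theorem~\ref{theorem:UniformErrorLowerBound} (the minimax lower bound), which \emph{applies} Fano's lemma by constructing the bump hypotheses $\zeta_m$ and verifying the KL condition. So there is no ``paper's own proof'' of the lemma to compare against; your instinct to simply cite Tsybakov is exactly what the paper does.

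That said, your testing reduction contains a genuine gap. You assert that for $P \in \{P_0\} \cup T$, ``any other candidate $Q \neq P$ satisfies $\rho(Q,P) \geq s$.'' The hypothesis of the lemma only controls $\rho(Q, P_0)$ for $Q \in T$; it says nothing about $\rho(Q, Q')$ for distinct $Q, Q' \in T$. Without pairwise separation inside $T$, your minimum-distance decoder argument breaks down, and in fact the lemma as literally stated can fail: take all elements of $T$ to be the \emph{same} distribution $Q$ with $\rho(Q,P_0)=1$ and $D_{KL}(Q,P_0)$ large but $\leq (\log|T|)/16$ for $|T|$ huge; then the problem collapses to a two-point test that can be solved with error probability below $1/8$. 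Tsybakov's Theorem~2.5 explicitly assumes $\rho(\theta_j,\theta_k) \geq 2s$ for \emph{all} pairs $j \neq k$, and the paper's bump construction satisfies this automatically (the $g_m$ have disjoint supports, so $\|\zeta_m - \zeta_{m'}\|_\infty = \|\zeta_m - \zeta_0\|_\infty$). So the fix is simply to add pairwise separation as an explicit hypothesis before running your argument, or to cite Tsybakov directly as you suggest.
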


\begin{proof}
We now proceed to construct an appropriate $P_0 \in \P$ and $T \subseteq \P$. Let $g : [-1,1]^d \to [0,1]$ defined by
\[g(x) = \left\{ \begin{array}{cc}
    \exp \left( 1 - \frac{1}{1 - \|x\|_2^2} \right) & \text{ if } \|x\|_2 < 1 \\
    0 & \text{ else }
\end{array} \right.\]
denote the standard bump function supported on $[-1,1]^d$, scaled to have $\|g\|_{\X,\infty} = 1$. Since $g$ is infinitely differentiable and compactly supported, it has a finite $\alpha$-H\"older semi-norm:
\begin{equation}
\|g\|_{\Sigma^\alpha}
  := \sup_{\ell \in \mathbb{N}^d : \|\ell\|_1 \leq \alpha} \quad \sup_{x \neq y \in \X} \quad \frac{|g^\ell(x) - g^\ell(y)|}{\|x - y\|^{\alpha - \|\ell\|_1}}
  < \infty,
\label{eqn:HolderSeminorm}
\end{equation}
where $\ell$ is any $\lfloor \beta \rfloor$-order multi-index and $g^\ell$ is the corresponding mixed derivative of $g$.
Define $M := \left( \frac{64(2\alpha + d)nr}{d \log(nr)} \right)^{\frac{1}{2\alpha + d}} \geq 1$, since $r \geq 1/n$. For each $m \in [M]^d$, define $g_m : \X \to [0,1]$ by
\[g_m (x) := g\left( Mx - \frac{2m - 1_d}{2} \right),\]
so that $\{g_m : m \in [M]^d\}$ is a grid of $M^d$ bump functions with disjoint supports.
Let $\zeta_0 \equiv \frac{1}{4}$ denote the constant-$\frac{1}{4}$ function on $\X$. Finally, for each $m \in [M]^d$, define $\zeta_m : \X \to [0,1]$ by
\begin{equation}
    \zeta_m := \zeta_0 + \min \left\{ \frac{1}{2}, \frac{L}{\|g\|_{\Sigma^\alpha}} \right\} M^{-\alpha} g_m.
    \label{eq:eta_m}
\end{equation}
Note that, for any $m \in [M]^d$,
\[\|\zeta_m\|_{\Sigma^\alpha}
  \leq L M^{-\alpha} \frac{\|g_m\|_{\Sigma^\alpha}}{\|g\|_{\Sigma^\alpha}}
  = L,\]
so that $\zeta_m$ satisfies the H\"older smoothness condition.
For any particular $\eta$, let $P_\eta$ denote the joint distribution of $(X, Y)$. Note that $P_\zeta(x, 1) = \zeta(x) \geq 1/4$. Moreover, one can check that, for all $x \geq -2/3$, $-\log(1 + x) \leq x^2 - x$. Hence, for any $x \in \X$,
\begin{align*}
    P_{\eta_m}(x, 1) \log \frac{P_{\eta_m}(x, 1)}{P_{\eta}(x, 1)}
    & = rP_{\zeta_m}(x, 1) \log \frac{P_{\zeta_m}(x, 1)}{P_{\zeta}(x, 1)} \\
    & = r\zeta_m(x) \log \frac{\zeta_m(x)}{\zeta(x)} \\
    & = -r\zeta_m(x) \log \left( 1 + \frac{\zeta(x) - \zeta_m(x)}{\zeta_m(x)} \right) \\
    & \leq r\zeta_m(x) \left( \left( \frac{\zeta(x) - \zeta_m(x)}{\zeta_m(x)} \right)^2 - \frac{\zeta(x) - \zeta_m(x)}{\zeta_m(x)} \right) \\
    & = r \left( \frac{\left( \zeta(x) - \zeta_m(x)\right)^2 }{\zeta_m(x)} - \zeta(x) + \zeta_m(x) \right) \\
    & \leq r \left( 4\left( \zeta(x) - \zeta_m(x)\right)^2 - \zeta(x) + \zeta_m(x) \right),
\end{align*}
and, similarly, since $P_\zeta(x,0) = 1 - \zeta(x) \geq 1/4$,
\[P_{r\eta_m}(x, 0) \log \frac{P_{r\eta_m}(x, 0)}{P_{r\eta}(x, 0)}
    \leq r\left( 4\left( \zeta(x) - \zeta_m(x)\right)^2 + \zeta(x) - \zeta_m(x) \right).\]
Adding these two terms gives
\begin{align*}
D_{\text{KL}}\left( P_{r\eta}^n, P_{r\eta_m}^n \right)
 & = n \left( \int_\X P_{r\eta_m}(x, 0) \log \frac{P_{r\eta}(x, 0)}{P_{r\eta_m}(x, 0)} \, dx + \int_\X P_{r\eta_m}(x, 1) \log \frac{P_{r\eta}(x, 1)}{P_{r\eta_m}(x, 1)} \, dx \right) \\
 & \leq 8nr\int_\X \left( \zeta(x) - \zeta_m(x)\right)^2 \\
 & = 8nr\|\zeta - \zeta_m\|_2^2 \\
 & \leq 2nr M^{-2\alpha} \|g_m\|_2^2 \\
 & = 2nr M^{-(2\alpha + d)} \|g\|_2^2 \\
 & = 2nr \left( \left( \frac{64 (2\alpha + d) nr}{d \log(nr)} \right)^{\frac{1}{2\alpha + d}}\right)^{-(2\alpha + d)} \|g\|_2^2 \\
 & = \frac{1}{32} \frac{d}{2\alpha + d} \|g\|_2^2 \log(nr) \\
 & \leq \frac{1}{16} \frac{d}{2\alpha + d} \left( \log(nr) - \log \log(nr) + \log \frac{64 (2\alpha + d)}{d} \right)
   = \frac{\log |[M]^d|}{16},
\end{align*}
where the second inequality comes from the definition of $\zeta_m$ (Eq.~\ref{eq:eta_m}) and the third inequality comes from the facts that $\|g\|_2^2 \leq 1$ and $\log \log x \leq \frac{1}{2} \log x$ for all $x > 1$.
Fano's lemma therefore implies the lower bound
\[\inf_{\hat \eta} \sup_{r \in (0, 1], \zeta \in \Sigma^\alpha(L)} 
\prob_{\{(X_i,Y_i)\}_{i = 1}^n \sim P_\eta^n} \left( \left\|r\zeta - r\hat \zeta \right\|_\infty \geq C \left( \frac{\log(nr)}{n} \right)^{\frac{\alpha}{2\alpha + d}} r^\frac{\alpha + d}{2\alpha + d} \right)
  \geq \frac{1}{8},\]
  where
  \[C = \frac{1}{2} \min \left\{ \frac{1}{2}, \frac{L}{\|g\|_{\Sigma^\alpha}} \right\} \left( \frac{d}{64(2\alpha + d)} \right)^{\frac{\alpha}{2\alpha + d}}.\]
\end{proof}

\section{Efficient Computation of the Optimal Stochastic Threshold}
\label{app:computation}

Although the focus of this paper is on \emph{statistical} properties of regression-thresholding classifiers, we note that, given an estimate $\hat \eta$ of the regression function, the empirically optimal stochastic threshold $(\hat p, \hat t)$, i.e., that which maximizes $M(\hat C_{\hat Y_{\hat \eta,\hat p, \hat t}})$, can be efficiently computed. In this appendix, we describe a simple algorithm for doing so. The key insight is that, because $(\hat p, \hat t)$ is used to threshold the observed empirical class probabilities $\hat \eta(X_1),...,\hat \eta(X_n)$ before computing $M$, $M(\hat C_{\hat Y_{\hat \eta,p, t}})$ only needs to be computed at the $n$ values of $\hat \eta$ actually observed in the data.

We also note that, while, by Corollary~\ref{corr:CMM_error_decomposition}, one can safely use the original training dataset to compute $(\hat p, \hat t)$, one can also safely use a much smaller subset of the data, since the rate of convergence in Lemma~\ref{lemma:estimation_error} is quite fast in $n$.

\begin{algorithm2e}[htb]
    \DontPrintSemicolon
        \KwInput{Estimated regression function $\hat\eta$,
                 training covariate samples $X_1,...,X_n$,
                 CMM $M$.}
        \KwOutput{Estimated optimal stochastic threshold $(\hat p, \hat t)$}
        Sample $Z_1,...,Z_n \stackrel{IID}{\sim} \operatorname{Uniform}([0, 1])$ \\
        $e_1,...,e_n \leftarrow \hat\eta(X_1),...,\hat\eta(X_n)$ \\
        $(e_1, Z_1),...,(e_n, Z_n) \leftarrow \mathsf{LexicographicSort}((e_1,Z_n),...,(e_n,Z_n))$ \\
        $\truepositive \leftarrow \frac{1}{n} \sum_{i = 1}^n Y_i$ \\
        $\falsepositive \leftarrow 1 - \text{TP}$ \\
        $\truenegative, \falsenegative \leftarrow 0$ \\
        $(\hat p, \hat t) \leftarrow (0, 0)$ \\
        $M_{\text{max}} \leftarrow M \left( \begin{bmatrix}
                \truenegative & \falsepositive \\
                \falsenegative & \truepositive
            \end{bmatrix} \right)$ \\
        \For{$i = 1; i <= n; i++$}{
            $\truepositive \leftarrow \truepositive - Y_i/n$ \\
            $\falsepositive \leftarrow \truepositive - (1 - Y_i)/n$ \\
            $\truenegative \leftarrow \truenegative + (1 - Y_i)/n$ \\
            $\falsenegative \leftarrow \falsenegative + Y_i/n$ \\
            $M_{\text{new}} \leftarrow M \left( \begin{bmatrix}
                \truenegative & \falsepositive \\
                \falsenegative & \truepositive
            \end{bmatrix} \right)$ \\
            \uIf{$M_{\text{new}} > M_{\text{max}}$}{
                 $(\hat p, \hat t) \leftarrow (e_i, Z_i)$ \\
                 $M_{\text{max}} \leftarrow M_{\text{new}}$ \\
            }
        }
        \Return{$(\hat p, \hat t)$}
    \caption{Efficient threshold-optimization algorithm.}
    \label{alg:threshold_selection}
\end{algorithm2e}

For large $n$, the runtime of Algorithm~\ref{alg:threshold_selection} is dominated by Line 3, which involves lexicographically sorting $n$ pairs. This can be done in $O(n \log n )$ time using standard comparison-based sorting algorithms. Hence, the overall runtime of Algorithm~\ref{alg:threshold_selection} is $O(n \log n)$.

\section{Further Experimental Details}
\label{app:further_experimental_details}
Experiments were run using the \texttt{numpy} and \texttt{scikit-learn} packages in Python 3.9, on a machine running Ubuntu 20.04 with an Intel Core i5-9600 CPU and 64 gigabytes of memory. Each experiment took about 10 minutes to run. Python code and instructions for reproducing Figures~\ref{fig:experiment1} and \ref{fig:experiment2} are available at \url{https://gitlab.tuebingen.mpg.de/shashank/imbalanced-binary-classification-experiments}.

\section{Experiments with Real Data: Case Study in Credit Card Fraud Detection}
\label{app:credit_card}

In this section, we explore theoretical predictions from the main paper in a real dataset, the Kaggle Credit Card Fraud Detection dataset (available at \url{https://www.kaggle.com/datasets/mlg-ulb/creditcardfraud} under an Open Database License (ODbL)), a widely used benchmark dataset for imbalanced classification. This dataset contains $29$ continuous features (computed via PCA from an underlying set of features) for each of 284,807 credit card transactions, of which $492$ ($0.172\%$) are labeled as fraudulent, and the remaining are assumed to be non-fraudulent. The supervised learning task is to predict whether a credit card transaction is fraudulent, given its $29$ PCA features. Due to computational limitations, we down-sampled the negative set (non-fraudulent transactions) by a factor of $0$ before conducting our experiments; however, we expect our main observations to hold on the full dataset as well.
We also $Z$-scored each feature (to have mean $0$ and variance $1$).

\begin{figure}
    \centering
    \includegraphics[width=0.8\linewidth]{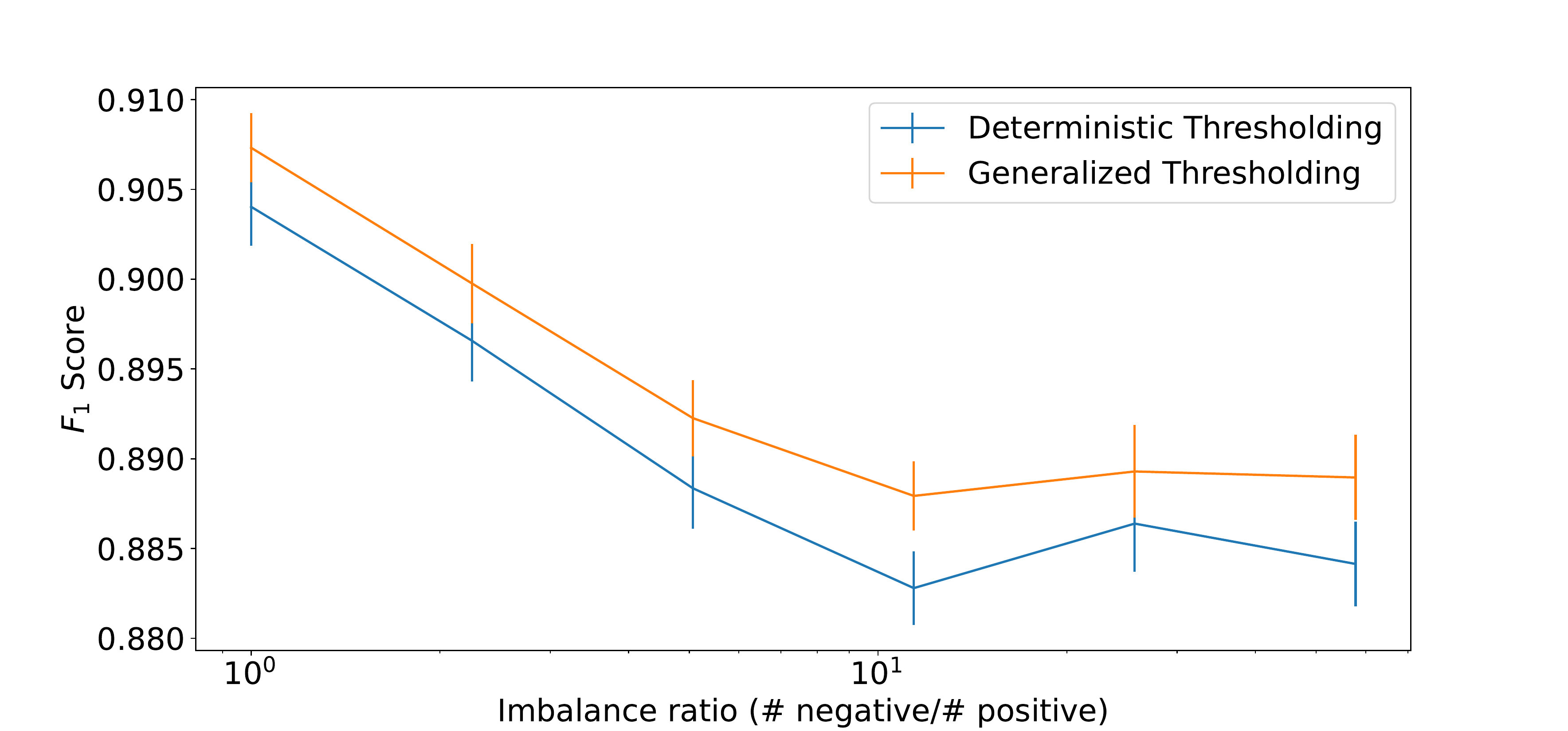}
    \caption{Mean $F_1$ scores (over $100$ random training/validation/test splits) of optimal deterministic and stochastic thresholding nearest neighbor classifiers, on the credit card fraud dataset, at various degrees of class imbalance. Error bars denote standard errors, computed over the $100$ random training/validation/test splits.}
    \label{fig:credit_card_f1_over_class_imbalance}
\end{figure}

\begin{figure}
    \centering
    \includegraphics[width=0.8\linewidth]{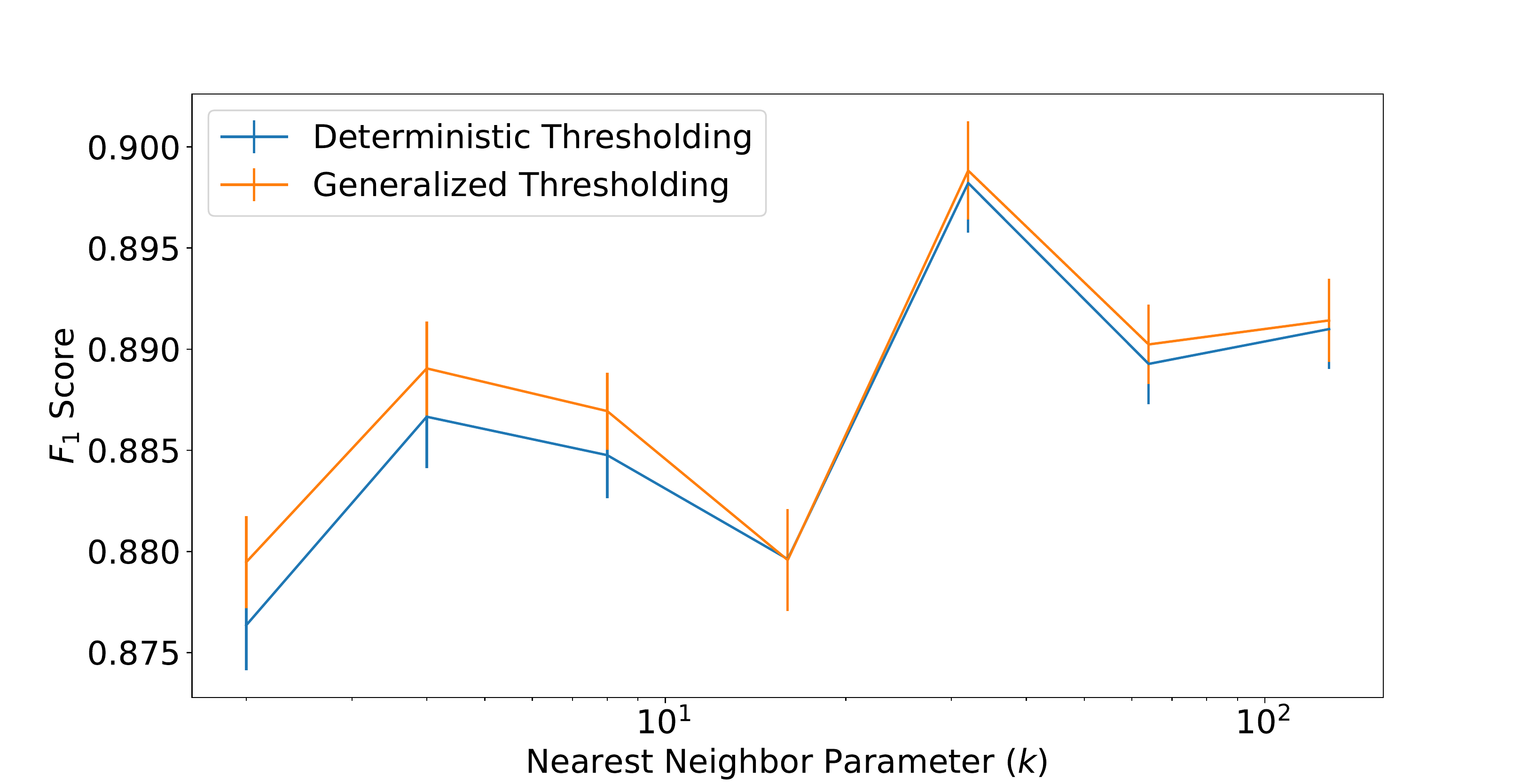}
    \caption{Mean $F_1$ scores (over $100$ random training/validation/test splits) of optimal deterministic and stochastic thresholding nearest neighbor classifiers, on the credit card fraud dataset, for various values of the nearest neighbor hyperparameter $k$. Error bars denote standard errors, computed over the $100$ random training/validation/test splits.}
    \label{fig:credit_card_f1_over_k}
\end{figure}

The main question we sought to investigate here was whether the theoretical finding, in Theorem~\ref{thm:generalized_bayes}, that stochastic classification is sometimes necessary in order to obtain optimal prediction performance under general performance metrics, would be visible in real data. To investigate this, we partitioned the dataset randomly into a training subset ($60\%$ of samples), a validation subset ($20\%$ of samples), and a test subset ($20\%$ of samples).
We fit a $k$-nearest neighbor regressor (with Euclidean distance as the underlying metric) to the training subset, used the validation subset to select optimal deterministic and generalization thresholds, and then used the test subset to evaluate performance. We evaluated performance in terms of $F_1$ score, since it is perhaps the CMM most widely used with imbalanced datasets. We then repeated this experiment with $100$ random train/validation/test splits and report aggregate results over these independent trials.

We generally found that, as predicted by our theoretical results, stochastic thresholding generally outperforms deterministic thresholding by a small but consistent margin. Figure~\ref{fig:credit_card_f1_over_class_imbalance} shows that, for fixed nearest neighbor hyperparameter $k = 4$, this effect is robust across differing degrees of class imbalance, for imbalance ratios ranging from $1:1$ (perfect balance) to $57:1$ (the full dataset), where class imbalance here was manipulated by down-sampling the negative class. Similarly, Figure~\ref{fig:credit_card_f1_over_k} shows that this effect is robust over different values of the nearest neighbor hyperparameter $k \in \{2, 4, 8, 16, 32, 64, 128\}$.

\end{document}